\numberwithin{equation}{section}
\definecolor{darkred}{rgb}{0.8,0,0}
\DeclareMathOperator{\diag}{\operatorname{diag}}
\newcommand{\dd}{\mathrm{d}}
\NewDocumentCommand{\figref}{m g}{\IfNoValueTF {#2} {\Cref{#1}}{\Cref{#1}.#2}}
\newcommand{\joinR}{\hspace{-.1em}}
\newcommand{\RomanI}{\textrm{I}}
\newcommand{\RomanII}{\mbox{\RomanI\joinR\RomanI}}
\theoremstyle{definition}
\newtheorem{definition}{Definition}[section]
\newtheorem{remark}[definition]{Remark}
\newtheorem{example}[definition]{Example}
\newtheorem{lemma}[definition]{Lemma}
\title{Stabilization techniques for immersogeometric analysis of plate and shell problems in explicit dynamics}
\author[1]{Giuliano Guarino \thanks{giuliano.guarino@epfl.ch}}
\author[1]{Yannis Voet \thanks{yannis.voet@epfl.ch}}
\author[1]{Pablo Antolin \thanks{pablo.antolin@epfl.ch}}
\author[1]{Annalisa Buffa \thanks{annalisa.buffa@epfl.ch}}
\affil[1]{\small MNS, Institute of Mathematics, École polytechnique fédérale de Lausanne, Station 8, CH-1015 Lausanne, Switzerland}
\date{\today}
\begin{document}

\maketitle

\begin{abstract}
Finite element plate and shell formulations are ubiquitous in structural analysis for modeling all kinds of slender structures, both for static and dynamic analyses. The latter are particularly challenging as the high order nature of the underlying partial differential equations and the slenderness of the structures all impose a stringent constraint on the critical time step in explicit dynamics. Unfortunately, badly cut elements in immersed finite element discretizations further aggravate the issue. While lumping the mass matrix often increases the critical time step, it might also trigger spurious oscillations in the approximate solution thereby compromising the numerical solution. In this article, we extend our previous work in \cite{voet2025stabilization} to allow stable immersogeometric analysis of plate and shell problems with lumped mass matrices. This technique is based on polynomial extensions and restores a level of accuracy comparable to boundary-fitted discretizations.

\noindent \textbf{Keywords}:
Plate and shell formulations, Trimming, Isogeometric analysis, Explicit dynamics, Mass lumping.
\end{abstract}

\section{Introduction}
Plate and shell structural elements are practically encountered in every modern structure, ranging from buildings, bridges, ships to aircraft fuselage. Engineering design commonly relies on advanced finite element software built from classical plate and shell mathematical models that were established more than a century ago. The intrinsically small thickness characterizing plates and shells often allows simplifying general 3D elasticity equations by assuming the displacement and/or stress fields satisfy special properties. Therefore, complex three-dimensional bodies are essentially described through their mid-surface as two-dimensional ones. Contrary to standard elasticity, the models derived either feature different primary unknowns (e.g., displacements and rotations) or high order derivatives that naturally impose greater regularity on the solution field. Unfortunately, building smooth approximation spaces from classical $C^0$ finite elements is rather cumbersome and despite many attempts, none were fully satisfactory \cite{hughes2012finite}. Isogeometric analysis, on the contrary, naturally fulfills the smoothness requirement and is ideally suited for high order formulations.

Isogeometric analysis (IGA) is a spline-based finite element method, which was originally designed with the intention of bridging the gap between design and analysis by using a single model for both \cite{hughes2005isogeometric,cottrell2009isogeometric}. The method came along with many other advantages. Most notably, spline functions, which are the primary technology in Computer-Aided-Design (CAD), may completely eliminate the geometry discretization error that characterizes standard finite element discretizations. Moreover, smooth spline functions have better approximation properties than classical $C^0$ Lagrange functions \cite{bazilevs2006isogeometric,bressan2019approximation,sande2020explicit}, which translates into greater accuracy per degree of freedom. Finally, there exist positive, compactly supported and stable bases, ideal for numerical computations. Therefore, isogeometric analysis is an outstanding candidate for high order finite element discretizations and held its promises, as testified by its numerous successful applications \cite{cottrell2006isogeometric,borden2014higher,morganti2015patient}, particularly for plate and shell problems \cite{kiendl2009isogeometric,benson2010isogeometric}.

However, for transient processes as in structural dynamics, a time discretization must complement the spatial one. In principle, isogeometric analysis could be used for both and is actively being investigated as an instance of space-time discretizations \cite{fraschini2024unconditionally}. In practice though, practitioners still mostly resort to standard explicit time integration techniques based on finite differences. One of the reasons for favoring explicit methods over implicit ones stems from the physical constraint on the step size. Indeed, highly oscillatory (nonlinear) processes already inherently limit the step size, independently of the stability constraint. However, other less oscillatory processes in the realm of linear structural dynamics could benefit from greater step sizes and the stability constraint might then get in the way. This stability constraint, commonly referred to as the Courant–Friedrichs–Lewy (CFL) condition, is inversely proportional to the largest discrete frequency of the system ($\omega_n$) and states that
\begin{equation}
\label{eq: CFL_condition}
    \Delta t \leq \Delta t_c = \frac{C}{\omega_n}
\end{equation}
where $\Delta t_c$ is the critical time step and $C$ is a constant that depends on the actual method. Thus, inaccurate high-frequency \emph{optical branches} characterizing standard finite element discretizations often impose a stringent constraint on the step size. Although maximally smooth isogeometric analysis removes most of the optical branches, some \emph{outlier} frequencies persist \cite{cottrell2006isogeometric} and grow larger as the mesh gets finer and the degree increases \cite{gallistl2017stability}. Dampening the optical branches and removing outliers is a central topic in finite element and isogeometric analysis. While algebraic \emph{mass scaling} techniques have been developed for standard finite element methods \cite{olovsson2005selective,tkachuk2014local,voet2025theoretical}, isogeometric analysis (and tensorized finite element discretizations more generally) enjoy significantly more structure, which often forms the cornerstone of tailored outlier removal strategies. In this context, common techniques include spline reparameterizations \cite{cottrell2006isogeometric}, weak \cite{deng2021boundary} or strong \cite{hiemstra2021removal,manni2022application} imposition of additional boundary conditions (also related to $n$-width optimal spline spaces) and eigenvalue deflation techniques \cite{voet2025mass}. Most of these methods are both applicable to second and fourth order problems.

Another great reason for employing explicit methods comes from \emph{mass lumping}. Indeed, explicit time integration of undamped dynamical systems leads to solving a linear system with the mass matrix at each time step. Instead of solving them ``exactly'', practitioners commonly substitute the (consistent) mass matrix with an easily invertible (preferably diagonal) matrix, a process that became known as mass lumping. Its most successful instance is undoubtedly for spectral element discretizations that consist in choosing as finite element nodes the Gauss-Lobatto quadrature nodes. Integrating the mass matrix with the same quadrature rule then naturally produces accurate diagonal lumped mass matrices, a method nowadays commonly known as nodal quadrature \cite{fried1975finite,cohen1994higher}. In this context, it may also coincide with the row-sum \cite{hughes2012finite} and diagonal scaling (or HRZ) techniques \cite{hinton1976note}, two other well-known algebraic lumping strategies \cite{duczek2019mass}. Moreover, mass lumping is often praised for increasing the critical time step, a property formally proved for the row-sum and extended to classes of (block) banded matrices in isogeometric analysis \cite{voet2023mathematical,voet2025mass}. Unfortunately, the nodal quadrature method is built around interpolatory bases and tensor product elements. Choosing a suitable lumping technique for more general bases and elements is far less obvious: while the row-sum technique may produce indefinite lumped mass matrices, the diagonal scaling method is often rather inaccurate \cite{duczek2019mass,eisentrager2024eigenvalue}. Although the positivity of the B-spline basis in isogeometric analysis is often seen as an advantage for guaranteeing positive definite lumped mass matrices, the row-sum technique in this context is still only second order accurate, independently of the spline order \cite{cottrell2006isogeometric}. 

Intensive research is currently underway to improve its accuracy and several paths are being explored. Most notably, several authors have suggested applying the row-sum technique in a Petrov-Galerkin framework by choosing (approximate) dual functions as test functions. Although the idea, whose origins date back to Cottrell \cite{cottrell2009isogeometric}, initially did not attract much attention, it was taken up again recently in \cite{anitescu2019isogeometric} with promising results. Since then, there has been a surge of interest in approximate dual functions \cite{nguyen2023towards, hiemstra2025higher, held2024efficient, nguyen2023higher}. Unfortunately, such functions come with all sorts of difficulties. Firstly, (approximate) duality only holds in the parametric domain. Although the rescaling suggested in \cite{nguyen2023towards} ensures this property also holds in the physical domain, it complicates the assembly of the stiffness matrix. Secondly, as for any discretization involving dual functions, the treatment of essential boundary conditions requires ad hoc techniques \cite{hiemstra2025higher}. Finally, the method has only been tested in rather simple academic settings and its extension to trimmed geometries is not in sight.

In another line of research, other authors \cite{li2022significance,li2024interpolatory} have built interpolatory spline bases and combined them with the row-sum technique in an attempt to mimic the nodal quadrature method. The method is seamlessly applicable to wave type equations \cite{li2022significance} as well as plate models \cite{li2024interpolatory} but although it improves the convergence rate, it remains sub-optimal. Moreover, several critical issues remain unattended. In particular, interpolatory spline functions are globally supported, leading to dense system matrices unsuited for large scale simulations. Additionally, quadrature rules with positive weights are essential for constructing positive definite lumped mass matrices. Yet, the Greville quadrature, as proposed in \cite{li2022significance,li2024interpolatory}, does not fulfill this condition and as a matter of fact, negative weights are encountered for non-uniform knot spans or spline spaces of intermediate regularity \cite{zou2021galerkin}. This last issue completely undermines the applicability of the method and has been overlooked by the authors.

Due to the aforementioned shortcomings, the standard row-sum technique often remains the method of choice for complex industrial applications, particularly for trimmed isogeometric geometries \cite{leidinger2020explicit}. Trimming is ubiquitous in industrial CAD models and consists in discarding or merging portions of the domain delimited by spline curves or surfaces \cite{marussig2018review, de2023stability}. Although it grants virtually unlimited flexibility in geometric design, small trimmed elements often cause great difficulties in analysis, including instabilities \cite{buffa2020minimal}, ill-conditioning \cite{de2017condition} and arbitrarily small step sizes (for consistent mass formulations) \cite{stoter2023critical,bioli2025theoretical}. The latter prohibits explicit time integration altogether unless some form of stabilization is employed \cite{stoter2023critical} or alternative time integration schemes are sought \cite{fassbender2024implicit}. Mass lumping is sometimes a remarkably simple workaround, which has been widely promoted in the engineering community \cite{messmer2021isogeometric,messmer2022efficient}. Indeed, the critical time step  in this case may no longer depend on how badly trimmed elements are, provided the discretization is sufficiently smooth \cite{leidinger2020explicit}. This surprising finding was later confirmed in several independent studies \cite{messmer2021isogeometric,coradello2021accurate,messmer2022efficient,radtke2024analysis} and recently unraveled theoretically \cite{stoter2023critical,bioli2025theoretical}. Nevertheless, there is another side to the story: while the largest discrete eigenvalues constraining the critical time step may remain bounded, the smallest ones instead converge to zero and bring in spurious high frequency modes in the low-frequency spectrum \cite{radtke2024analysis,coradello2021accurate}. As recently shown in \cite{voet2025stabilization}, if these modes are unluckily activated, they may completely ruin the solution by triggering spurious high-frequency oscillations. As shown later in this contribution, those issues are not confined to wave type problems and may also arise for plate and shell models. Fortunately, a solution for the former may also be a solution for the latter and we will confirm it in this work. Tentative mitigation strategies were already listed in \cite{radtke2024analysis}, including incomplete lumping strategies. However, although they might restore some accuracy, conveniently inverting the partially lumped mass matrix is less obvious, particularly in dimension $d \geq 2$, when trimmed elements may constitute a significant portion of the boundary. Instead, in \cite{voet2025stabilization}, the authors stabilized the approximation space by extending polynomial segments of large elements into small neighboring elements. This technique both removed trimming-related outlier frequencies from the consistent mass and prevented spurious modes from surfacing in the low-frequency spectrum when it is lumped. After stabilization, the accuracy and CFL condition were in all aspects similar to boundary-fitted discretizations. Moreover, the stabilization method may be combined with advanced mass lumping techniques \cite{voet2023mathematical,voet2025mass} for further improving the accuracy of the row-sum. In this work, we extend the stabilization technique to plate and shell problems to produce similar results.

The outline for the rest of our article is as follows: After reviewing in \Cref{se: shell_theory} some elementary principles from differential geometry and shell theory, the variational problem is discretized in \Cref{se: discretization} with immersed isogeometric methods. The stabilization technique is then recalled in \Cref{se: stabilization} and numerical experiments follow in \Cref{se: numerical_experiments}. Although plate and shell models are prone to spurious oscillations for lumped mass formulations, they are cured with the same stabilization technique as for standard wave equations. Finally, \Cref{se: conclusion} summarizes our findings and concludes our article.







\section{Shell theory}
\label{se: shell_theory}
In this section, we first recall some fundamental notions of differential geometry at the core of shell theories before formulating the problem in its weak (or variational) form. Throughout this section, we adopt some common conventions of shell theory whereby Greek indices take values $1,2$ and Latin indices takes values $1,2,3$. Additionally, we employ in this section (and this section only) Einstein's convention where summations are carried out over repeated indices. Several key references have guided the construction of this section, including \cite{ciarlet2005introduction,ciarlet2022mathematical,chapelle2010finite,bischoff2004models}, which the reader may consult for further details.

\subsection{Shell geometry}
The deformation of shells is always described with respect to their mid-surface, defined by a parameterized surface; i.e. a smooth injective map (or chart) $\bm{\mathcal{F}} \in C^2(S; \ \mathbb{R}^3$), which maps a point $\bm{\xi} = (\xi_1,\xi_2)$ in the parametric domain $S \subset \mathbb{R}^2$ to a point $\bm{x}=\bm{\mathcal{F}}(\bm{\xi})$ in the physical space $\mathbb{R}^3$. The parametric domain $S$ is assumed open, bounded and connected with a Lipschitz boundary $\partial S$. The mid-surface is then defined as $\widehat{S}=\bm{\mathcal{F}}(S)$, as shown in \Cref{fig: param_surface}.

\begin{figure}[H]	\centering
    \includegraphics[width=0.7\textwidth]{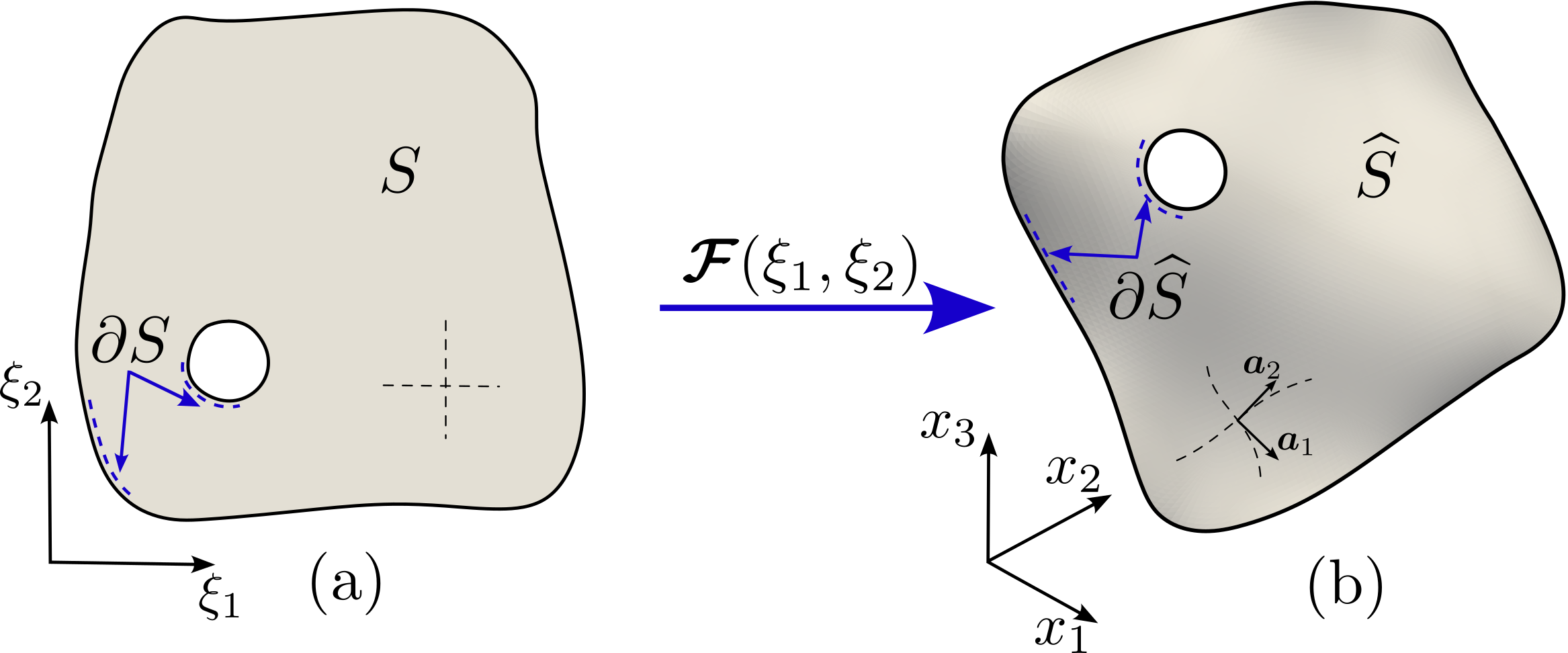}
    \caption{Parameterized surface.}
    \label{fig: param_surface}
\end{figure}

For complex shell structures, several maps may be necessary for defining the mid-surface but for the sake of the presentation, we only consider a single one. To each point $\bm{x}=\bm{\mathcal{F}}(\xi_1,\xi_2)$ on the mid-surface is associated a local basis (called the covariant basis) whose vectors are defined as 
\begin{equation}
    \bm{a}_\alpha = \bm{\mathcal{F}}_{,\alpha}
\end{equation}
where $\bm{\mathcal{F}}_{,\alpha}$ denotes differentiation of $\bm{\mathcal{F}}$ with respect to $\xi_{\alpha}$. The covariant basis vectors $\bm{a}_\alpha$ are tangent to the mid-surface and span the tangent plane (see \figref{fig: param_surface}{b}). A unit normal vector (or director) is computed from the covariant basis as
\begin{equation}
    \bm{a}_3 = \frac{\bm{a}_1 \times \bm{a}_2}{\|\bm{a}_1 \times \bm{a}_2\|}
\end{equation}
where $\times$ denotes the cross product and $\|\bullet\|$ the standard Euclidean norm. One of the most prominent objects in differential geometry is the metric tensor (or first fundamental form), which is nothing but a standard Gram matrix for the Euclidean inner product (denoted $\cdot$) on the tangent space. Its components in the covariant basis are $a_{\alpha\beta}=\bm{a}_\alpha\cdot\bm{a}_\beta$ and we let $a = \det(\nabla \bm{\mathcal{F}}^T \nabla \bm{\mathcal{F}})$ denote its determinant.

Shell structures may also experience deformations relative to its mid-surface. For this reason, one often introduces another more convenient basis, called the contravariant basis. Its basis vectors $\bm{a}^\alpha$ are dual to the covariant basis; i.e. they satisfy
\begin{equation*}
    \bm{a}^\alpha \cdot \bm{a}_\beta = \delta_{\alpha\beta}
\end{equation*}
where $\delta_{\alpha\beta}$ is the Kronecker delta symbol ($\delta_{\alpha\beta}=1$ if $\alpha=\beta$ and $0$ otherwise). The components of the metric tensor in the covariant and contravariant bases are related by
\begin{equation}
\label{metric_tensor_relation}
    [a^{\alpha\beta}] = [a_{\alpha\beta}]^{-1}
\end{equation}
and allow computing the vectors of the local contravariant basis as
\begin{equation}
    \bm{a}^\alpha= a^{\alpha\beta}\bm{a}_\beta.
\end{equation}
Shell theory distinguishes thick and thin shells based on their curvature and thickness. The components of the curvature tensor (or second fundamental form) are defined as
\begin{equation*}
    b_{\alpha\beta} = \bm{a}_{\alpha,\beta} \cdot \bm{a}_3.
\end{equation*}
The first and second fundamental forms induce the symmetric bilinear forms
\begin{equation*}
    \RomanI(\bm{u},\bm{v})= a_{\alpha\beta}u^{\alpha}v^{\beta} \quad \text{and} \quad \RomanII(\bm{u},\bm{v})= b_{\alpha\beta}u^{\alpha}v^{\beta}
\end{equation*}
where $\bm{u}= u^\alpha \bm{a}_\alpha$ and $\bm{v}= v^\beta \bm{a}_\beta$ are vectors in the tangent plane and $u^\alpha$ and $v^\beta$ are their contravariant components. Note that the components in the covariant basis are actually called ``contravariant'', while the components in the contravariant basis are called ``covariant''. From the first and second fundamental forms, we define the curvature along the cross section normal to $\bm{u}$ by the Rayleigh quotient
\begin{equation*}
    \kappa(\bm{u})=\frac{\RomanII(\bm{u},\bm{u})}{\RomanI(\bm{u},\bm{u})}.
\end{equation*}
Since $\RomanI$ and $\RomanII$ are symmetric and $\RomanI$ is positive definite, 
\begin{equation*}
    \kappa_{\min} \leq \kappa(\bm{u}) \leq \kappa_{\max}
\end{equation*}
where $\kappa_{\min}$ and $\kappa_{\max}$ are called the principal curvatures. The curvature may take positive or negative values but in shell theory, one is mostly interested in its largest magnitude or equivalently, in the smallest radius of curvature
\begin{equation*}
    R_{\min} = \frac{1}{\max |\kappa(\bm{u})|}.
\end{equation*}
The radius of curvature is one of the key quantities for defining the slenderness of shell structures. The other important geometrical quantity is the shell's thickness $\tau$ that now shapes it as a fully three-dimensional body. Denoting $T = \left(-\frac{\tau}{2}, \ \frac{\tau}{2}\right)$, we define the 3D parametric domain
\begin{equation*}
    \Omega = S \times T.
\end{equation*}
In principle, the shell's thickness may vary from one point to another (i.e., $\tau=\tau(\xi_1,\xi_2)$) but for simplicity we do not consider such cases here. The reference (or initial) configuration of the full three-dimensional body of the shell is then defined as $\widehat{\Omega}=\bm{\mathcal{G}}(\Omega)$, where $\bm{\mathcal{G}} \colon \Omega \to \mathbb{R}^3$ is the 3D chart
\begin{equation*}
    \bm{\mathcal{G}}(\xi_1,\xi_2,\xi_3) = \bm{\mathcal{F}}(\xi_1,\xi_2) + \xi_3 \bm{a}_3(\xi_1,\xi_2),
\end{equation*}
also sometimes called the canonical extension of $\bm{\mathcal{F}}$. Similarly to the surface, the covariant basis vectors for the volume $\bm{g}_i = \bm{\mathcal{G}}_{,i}$ are given by
\begin{subequations}
\label{eq: vol_covariant_basis}
\begin{align}
    \bm{g}_\alpha &= \bm{a}_\alpha + \xi_3 \bm{a}_{3,\alpha}, \\
    \bm{g}_3 &= \bm{a}_3,
\end{align}
\end{subequations}
and the corresponding contravariant basis vectors $\bm{g}^i$ and determinant $g=\det(\nabla \bm{\mathcal{G}}^T \nabla \bm{\mathcal{G}})$ are defined analogously as before. Given some characteristic length $L$ for the in-plane dimension of $\Omega$, the ratios
\begin{equation}
\label{eq: slenderness_ratio}
\eta=\frac{R_{\min}}{\tau} \quad \text{and} \quad \zeta=\frac{L}{\tau}
\end{equation}
are called \emph{slenderness ratios} and play an important role in (heuristically) classifying shell or plate theories. The former is commonly used for shells while the latter is customary for plates. To ensure all quantities are well-defined, one must assume that $\eta > 0.5$ \cite[][Fig. 2.6]{chapelle2010finite}. Depending on how large $\eta$ or $\zeta$ is, various assumptions on the strain and/or stress fields may apply and underpin different theories. Since plates are actually a particular case of shells, we will focus on the latter, which is much more general. Among the most common and well-studied theories are the Kirchhoff-Love and Reissner-Mindlin theories. The Kirchhoff-Love theory assumes that the vectors normal to the mid-surface remain normal after deformation and therefore neglects transverse shear deformations. This assumption applies to thin shells (e.g., $\eta \geq 20$ \cite{kiendl2009isogeometric}) and relates the deformation within the shell to the deformation of the mid-surface only. Although the model only features displacement unknowns, it also involves high order derivatives and imposes greater regularity requirements. As a matter of fact, finite element discretizations based on the Kirchhoff-Love theory require $C^1$ approximation spaces, a condition that is not easily fulfilled with classical (Lagrange) finite elements. This was historically one of the main reasons for adopting alternative shell theories in finite element codes \cite{kiendl2009isogeometric}. 

One very popular alternative is the Reissner-Mindlin theory, where transverse shear deformations are properly accounted for and is applicable to moderately thick shells (e.g., $\eta < 20$ \cite{kiendl2009isogeometric}). However, the model requires introducing additional unknowns (rotations) for relating the displacement of the mid-surface to the displacement through the thickness. Those additional unknowns avoid introducing high order derivatives, which has tangible consequences: contrary to the Kirchhoff-Love theory, finite element discretizations based on the Reissner-Mindlin theory only require classical $C^0$ approximation spaces and are therefore more widespread \cite{kiendl2009isogeometric}. For this reason, we adopt the Reissner-Mindlin theory in the rest of the article, while keeping in mind that the methods presented herein virtually apply to any shell formulation, including Kirchhoff-Love theory.

\subsection{Shell kinematics}
The variational formulation of shell problems originally stems from simplifications of the variational formulation of three-dimensional elasticity, albeit stated in curvilinear coordinates. Curvilinear coordinates are very natural in this setting and allow separating the in-plane contributions from the out-of-plane (or transverse) ones, which is essential for reducing the three-dimensional problem to a simpler two-dimensional one along the mid-surface. We merely state below the result of this transformation and refer interested readers to \cite[][Chapter 1]{ciarlet2022mathematical} for its derivation. In curvilinear coordinates, the three-dimensional variational form reads: find $\mathbf{u}(t) \in V = \{\mathbf{v} \in [H^1]^3 \colon \mathbf{v}|_{\partial \Omega_D} = 0\}$ such that
\begin{equation}
\label{eq: 3D_elasticity_curvilinear}
    \int_{\Omega} \rho \ddot{\mathbf{u}} \cdot \mathbf{v} \sqrt{g} \ \dd \Omega + \int_{\Omega} \bm{\epsilon}(\mathbf{u}) \colon \bm{C} \colon \bm{\epsilon}(\mathbf{v}) \sqrt{g} \ \dd \Omega = \int_{\Omega} \mathbf{f} \cdot \mathbf{v} \sqrt{g} \ \dd \Omega + \int_{\partial \Omega_N} \mathbf{h} \cdot \mathbf{v} \sqrt{g} \ \dd \partial \Omega \quad \forall \mathbf{v} \in V
\end{equation}
where the Dirichlet and Neumann boundaries ($\partial \Omega_D$ and $\partial \Omega_N$, respectively) form a disjoint partition of the boundary ($\partial \Omega = \partial \Omega_D \cup \partial \Omega_N$ and $\partial \Omega_D \cap \partial \Omega_N = \emptyset$) and we have assumed homogeneous Dirichlet boundary conditions for simplicity. In \eqref{eq: 3D_elasticity_curvilinear}, $\rho$ is the density, $\bm{\epsilon}$ is the linearized strain tensor (defined later), $\mathbf{f}$ and $\mathbf{h}$ are the volume force and ``surface traction'', respectively, and $\mathbf{v}$ is a test function. We must stress that the ``surface traction'' $\mathbf{h}$ differs from the expression one is accustomed to in standard elasticity and accounts for additional terms stemming from the change of variables. Its expression is derived in \cite[][Chapter 1]{ciarlet2022mathematical} but is not needed in the sequel. Finally, $\bm{C}$ is the fourth order elasticity tensor, whose contravariant components for a linear elastic homogeneous and isotropic material are given by
\begin{equation*}
    C^{ijkl} = \lambda g^{ij}g^{kl} + \mu(g^{ik}g^{jl} + g^{il}g^{jk}),
\end{equation*}
where $\lambda$ and $\mu$ are the Lamé constants. In this contribution, we limit ourselves to isotropic materials that cover the majority of industrial applications. For the constitutive relationship of laminates, which are also relevant in the context of shell structures, the interested reader may consult \cite{guarino2024interior,guarino2024immersed}.

Shell theories assume that the displacement field takes a specific form, which is another cause for diverging theories. Nevertheless, all shell theories share one common goal: when combined with ad hoc approximations, they reduce the problem to a two-dimensional one along the mid-surface. Our work focuses on classical shell theories that assume a unit vector normal to the mid-surface remains straight and unstretched after deformation. This assumption translates into expressing the displacement field $\mathbf{u}$ as
\begin{equation}
\label{eq: disp_field}
    \mathbf{u}(\xi_1,\xi_2,\xi_3,t)=\bm{u}(\xi_1,\xi_2,t)+\xi_3 \bm{\theta}(\xi_1,\xi_2,t)
\end{equation}
where $\bm{u}(\xi_1,\xi_2,t)$ and $\bm{\theta}(\xi_1,\xi_2,t)$ denote the displacement and rotation, respectively, at a point $\bm{x}=\bm{\mathcal{F}}(\xi_1,\xi_2)$ of the mid-surface at time $t$. We have deliberately chosen the same letter for the displacement field in the shell and along the mid-surface but different font styles. This convention is systematically employed in analogous expressions.

The Reissner-Mindlin kinematical assumption \cite[][p. 95]{chapelle2010finite} states that $\bm{a}_3 \cdot \bm{\theta}=0$, which implies that $\bm{\theta}$ lies in the tangent plane (i.e., it does not have any component along $\bm{a}_3$). Contrary to the Kirchhoff-Love theory, where the rotations are related to the displacements (derivatives), the Reissner-Mindlin theory features both displacements and rotations as primary unknowns. The displacements pertain to the mid-surface while the rotations relate the displacement of the mid-surface to the displacement through the thickness. Using the covariant coordinates $\theta_\alpha$ of the rotation vector is advantageous in this context since it reduces the number of unknowns. However, the lack of assumptions on the mid-surface's displacement field does not suggest a particularly attractive basis and we simply stick to the Cartesian basis; i.e., $\bm{u} = u_i \bm{e}_i$ and $\bm{\theta} = \theta_\alpha \bm{a}^\alpha$. For a given vector field $\bm{v}$ and a fixed basis $\mathcal{B}$, we will denote $[\bm{v}]_{\mathcal{B}}$ the component vector of $\bm{v}$ in the basis $\mathcal{B}$. Hence, denoting $\mathcal{E}=\{\bm{e}_1,\; \bm{e}_2,\; \bm{e}_3\}$ the Cartesian basis and $\mathcal{A}=\{\bm{a}^1,\; \bm{a}^2\}$ the surface's contravariant basis,
\begin{equation*}
    [\bm{u}]_{\mathcal{E}} = (u_1, u_2, u_3) \quad \text{and} \quad [\bm{\theta}]_{\mathcal{A}} = (\theta_1,\theta_2).
\end{equation*}

\begin{remark}
Multiple other forms of the displacement field have been suggested in the literature. One rather natural generalization of \eqref{eq: disp_field} consists in adding high order terms in $\xi_3$ to better model the stress distribution through the thickness. The interested reader may consult \cite{carrera2003theories, patton2021efficient, guarino2021high} for an example of high order shell theories. There exist numerous other shell theories, which arise from combining various geometrical and mechanical assumptions.
\end{remark}

As we will see, the special form of the displacement field \eqref{eq: disp_field} allows to reduce the general three-dimensional elasticity problem \eqref{eq: 3D_elasticity_curvilinear} to a simpler two-dimensional one. For understanding how, we must simplify the expression of the linearized strain tensor, whose covariant components are given by
\begin{equation}
\label{eq: covariant_comp}
    \epsilon_{ij} = \frac{1}{2}(\bm{g}_i \cdot \mathbf{u}_{,j} + \bm{g}_j \cdot \mathbf{u}_{,i}).
\end{equation}
The derivation of the foregoing expression (rarely detailed in most articles) is deferred to \Cref{app: covariant_comp_strain}. Given the derivatives of the displacement field
\begin{subequations}
\label{eq: disp_derivatives}
\begin{align}
    \mathbf{u}_{,\alpha} &= \bm{u}_{,\alpha} + \xi_3 \bm{\theta}_{,\alpha}, \\
    \mathbf{u}_{,3} &= \bm{\theta},
\end{align}
\end{subequations}
and the covariant basis vectors \eqref{eq: vol_covariant_basis}, the covariant components of the strain tensor \eqref{eq: covariant_comp} reduce to
\begin{subequations}
\label{eq: linearized_strains}
\begin{align}
    \epsilon_{\alpha\beta} &= \varepsilon_{\alpha \beta} + \xi_3 \kappa_{\alpha \beta} + \xi_3^2 \chi_{\alpha\beta}, \label{eq: membrane_strain}\\
    \epsilon_{3\alpha} &= \frac{\gamma_{\alpha}}{2}, \label{eq: bending_strain}\\
    \epsilon_{33} &= 0, \label{eq: shear_strain}
\end{align}
\end{subequations}
where
\begin{subequations}
\label{eq: strains_def}
\begin{align}
    2\varepsilon_{\alpha\beta} &= \bm{a}_\alpha\cdot\bm{u}_{,\beta}+\bm{a}_\beta\cdot\bm{u}_{,\alpha},\\
    2\kappa_{\alpha\beta} &= 
    (\theta_{\alpha,\beta}+\theta_{\beta,\alpha}) - (\bm{a}_{\alpha,\beta}\cdot\bm{\theta} + \bm{a}_{\beta,\alpha}\cdot\bm{\theta}) + (\bm{a}_{3,\alpha}\cdot\bm{u}_{,\beta} + \bm{a}_{3,\beta}\cdot\bm{u}_{,\alpha}),\\
    2\chi_{\alpha\beta} &= \bm{a}_{3,\alpha}\cdot \bm{\theta}_{,\beta} + \bm{a}_{3,\beta} \cdot \bm{\theta}_{,\alpha}, \\
    \gamma_{\alpha} &= \bm{a}_3\cdot\bm{u}_{,\alpha} + \theta_\alpha.
\end{align}
\end{subequations}
These expressions (called generalized strains) merely follow from substituting \eqref{eq: vol_covariant_basis} and \eqref{eq: disp_derivatives} into \eqref{eq: covariant_comp} and exploiting the fact that 
\begin{align*}
    0=(\bm{a}_3 \cdot \bm{\theta})_{,\alpha} &= \bm{a}_{3,\alpha} \cdot \bm{\theta} + \bm{a}_3 \cdot \bm{\theta}_{,\alpha}, \\
    \theta_{\alpha,\beta} = (\bm{a}_{\alpha} \cdot \bm{\theta})_{\beta} &= \bm{a}_{\alpha,\beta} \cdot \bm{\theta} + \bm{a}_{\alpha} \cdot \bm{\theta}_{,\beta},
\end{align*}
to simplify and rewrite the relations. In particular, combining the special form of the displacement field with the kinematic assumption already enforces the place strain assumption ($\epsilon_{33}=0$). The conflicting plane stress assumption $(\sigma_{33}=0)$ is later enforced in the constitutive equations (see \Cref{app: volume_to_surface}). The quadratic dependency on $\xi_3$ in \eqref{eq: membrane_strain} also later justifies neglecting $\chi_{\alpha\beta}$ for thin shells. The remaining quantities $\varepsilon_{\alpha\beta}$, $\kappa_{\alpha\beta}$ and $\gamma_{\alpha}$ are called membrane, bending and shear strains, respectively. Their actual expression is not very important in the context of this work; one must only note that they are independent of the transverse variable $\xi_3$.

\subsection{Two-dimensional variational problem}
The two-dimensional variational problem along the mid-surface stems from its full three-dimensional counterpart \eqref{eq: 3D_elasticity_curvilinear}, after substituting the expressions for the displacement and strain fields \eqref{eq: disp_field} and \eqref{eq: linearized_strains} and combining them with several geometrical approximations. The final result is reported in this section while its derivation is postponed to \Cref{app: volume_to_surface}. We have not found a satisfactory reference presenting a sufficiently concise and comprehensive treatment of the Reissner-Mindlin model. For completeness, we have summarized and regrouped the main steps in the appendix and included pointers to relevant literature for a more detailed discussion.

Denoting $\bm{\mathsf{u}}=(\bm{u},\bm{\theta})$, $\bm{\mathsf{v}}=(\bm{v},\bm{\phi})$ and $\mathsf{E} = \mathcal{E} \times \{\bm{0}\} \cup \{\bm{0}\} \times \mathcal{A}$ the basis, we seek $\bm{\mathsf{u}}(t) \in \mathsf{V}$, where
\begin{equation*}
    \mathsf{V} = \{\bm{\mathsf{v}} \colon [\bm{\mathsf{v}}]_{\mathsf{E}} \in [H^1]^3 \times [H^1]^2,\, \bm{\mathsf{v}}|_{\partial S_D} = \bm{0}\}
\end{equation*}
such that
\begin{equation}
\label{eq: variational_problem}
    b(\bm{\mathsf{u}}, \bm{\mathsf{v}})+a(\bm{\mathsf{u}}, \bm{\mathsf{v}}) = F(\bm{\mathsf{v}}) \qquad \forall \bm{\mathsf{v}} \in \mathsf{V}
\end{equation}
where
\begin{subequations}
\begin{align}
    a(\bm{\mathsf{u}}, \bm{\mathsf{v}}) &= \int_{S} (\bm{N}(\bm{\mathsf{u}}) \colon \bm{\varepsilon}(\bm{\mathsf{v}}) +  \bm{M}(\bm{\mathsf{u}}) \colon \bm{\kappa}(\bm{\mathsf{v}}) + \bm{Q}(\bm{\mathsf{u}}) \cdot \bm{\gamma}(\bm{\mathsf{v}})) \sqrt{a} \dd S, \label{eq: bilinear_form_a} \\
    b(\bm{\mathsf{u}}, \bm{\mathsf{v}}) &= \int_{S}(\ddot{\bm{\mathsf{u}}} \cdot W \bm{\mathsf{v}}) \sqrt{a} \dd S, \label{eq: bilinear_b} \\
    F(\bm{\mathsf{v}}) &= \int_{S} \bm{\mathsf{f}} \cdot \bm{\mathsf{v}} \sqrt{a} \dd S + \int_{\partial S_N} \bm{\mathsf{h}} \cdot \bm{\mathsf{v}} \sqrt{a} \dd \partial S, \label{eq: linear_form_F}
\end{align}
\end{subequations}
where the Dirichlet and Neumann boundaries ($\partial S_D$ and $\partial S_N$, respectively) form a disjoint partition of the shell's boundary ($\partial S = \partial S_D \cup \partial S_N$ and $\partial S_D \cap \partial S_N = \emptyset$). In \eqref{eq: bilinear_b}, $W=\diag(\rho_u I_3,\; \rho_\theta I_3)$ with $\rho_u=\tau\rho$ and  $\rho_\theta=\frac{\tau^3}{12}\rho$ and in \eqref{eq: linear_form_F}
\begin{equation*}
    \bm{\mathsf{f}} = (\bm{f}, \bm{m}) \quad \text{and} \quad \bm{\mathsf{h}} = (\bm{h}, \bm{n}) 
\end{equation*}
are the shell's ``external forces'' and ``boundary tractions'', whose components are defined in \Cref{app: volume_to_surface}. The tensors $\bm{N}$, $\bm{M}$ and $\bm{Q}$ appearing in \eqref{eq: bilinear_form_a}, called the generalized membrane, bending and shear stresses, respectively, are defined as
\begin{align}
    N^{\alpha\beta} = E_u^{\alpha\beta\gamma\delta}\varepsilon_{\gamma\delta}, \quad M^{\alpha\beta} = E_\theta^{\alpha\beta\gamma\delta}\kappa_{\gamma\delta} \quad \text{and} \quad Q^{\alpha}      &= \mu\alpha_s\tau a^{\alpha\beta }\gamma_\beta,
\end{align}
where $\alpha_s=\nicefrac{5}{6}$ is the shear correction factor and the fourth order tensors $E_u^{\alpha\beta\gamma\delta}=\tau E^{\alpha\beta\gamma\delta}$ and $E_\theta^{\alpha\beta\gamma\delta}=\frac{\tau^3}{12} E^{\alpha\beta\gamma\delta}$ are related to the shell elasticity tensor, whose contravariant components are
\begin{equation}
    E^{\alpha\beta\gamma\delta}=\frac{2 \lambda \mu}{\lambda+2\mu}a^{\alpha\beta} a^{\gamma\delta} + \mu(a^{\alpha\gamma} a^{\beta\delta}+a^{\alpha\delta} a^{\beta\gamma}).
\end{equation}
Verifying the well-posedness of this reduced variational problem falls outside the scope of this contribution. The interested reader may nevertheless consult \cite{ciarlet2022mathematical} for useful hints on the matter.



\section{Isogeometric discretization and mass lumping}
\label{se: discretization}
In finite element analysis, the components of an approximate solution to the variational problem \eqref{eq: variational_problem} are sought in a finite dimensional subspace. In isogeometric analysis, this subspace is a spline space; i.e., a space of piecewise polynomials $\mathcal{S}_{h}$, parameterized by a mesh size $h$, spline order $p$ and continuity $0 \leq k \leq p-1$. In principle, different components of the solution may be approximated in different spaces and the discretization parameters of a given space  may depend on the parametric direction but for simplifying the notation, we assume that the spaces are identical for each component and that the discretization parameters are uniform. Extending the analysis to mixed approximation spaces is straightforward, albeit more cumbersome. Furthermore, the solution of time-dependent PDEs is often approximated in two stages, by first discretizing in space and then in time. The approximate solution is called semi-discrete when it is only discretized in space while it is called fully discrete when it is discretized in both space and time. Although space-time discretization techniques (simultaneously discretizing both in space and in time) are gaining momentum, the former is still the most popular technique and the one employed in this work. Hence, denoting $m=\dim(\mathcal{S}_{h})$ the dimension of the spline space and $\bm{\xi}=(\xi_1,\xi_2)$ the two-dimensional curvilinear coordinates, for each time $t$, $\bm{\mathsf{u}}(t) \approx \bm{\mathsf{u}}_h(t) = (\bm{u}_h(t),\bm{\theta}_h(t))$ where $\bm{\mathsf{u}}_h(t) \in \mathsf{V}_h = \{\bm{\mathsf{v}}_h \colon [\bm{\mathsf{v}}_h]_{\mathsf{E}} \in [\mathcal{S}_{h}]^3 \times [\mathcal{S}_{h}]^2,\, \bm{\mathsf{v}}_h|_{\partial S_D} = \bm{0}\}$ such that
\begin{equation*}
    \bm{u}_h(\bm{\xi},t) = \sum_{j=1}^m \bm{u}_j(t) B_j(\bm{\xi}) \quad \text{and} \quad \bm{\theta}_h(\bm{\xi},t) = Q\sum_{j=1}^m \bm{\theta}_j(t) B_j(\bm{\xi})
\end{equation*}
where $B_j(\bm{\xi})$ are the B-spline basis functions, $\bm{u}_j(t) \in \mathbb{R}^3$ and $\bm{\theta}_j(t) \in \mathbb{R}^2$ are unknown coefficient vectors and $Q=[\bm{a}^1,\; \bm{a}^2] \in \mathbb{R}^{3 \times 2}$ is the matrix whose columns are the (surface) contravariant basis vectors. This matrix accounts for the fact that $\bm{\theta}_h$ is expressed in the contravariant basis. For trimmed isogeometric shells, the B-spline basis is typically constructed over an entire fictitious (non-trimmed) parametric domain $S_0 \supseteq S$ before retaining the functions whose support intersects the parametric domain $S$, defining the so-called active basis. This procedure is exemplarily illustrated in \Cref{fig: fictitious_domain} for the parameterized surface of \Cref{fig: param_surface}. The construction of the B-spline basis over the fictitious domain $S_0$ is fairly standard and the reader may consult \cite{hughes2005isogeometric,cottrell2009isogeometric} for the details.

\begin{figure}[H]	
    \centering
    \includegraphics[width=0.7\textwidth]{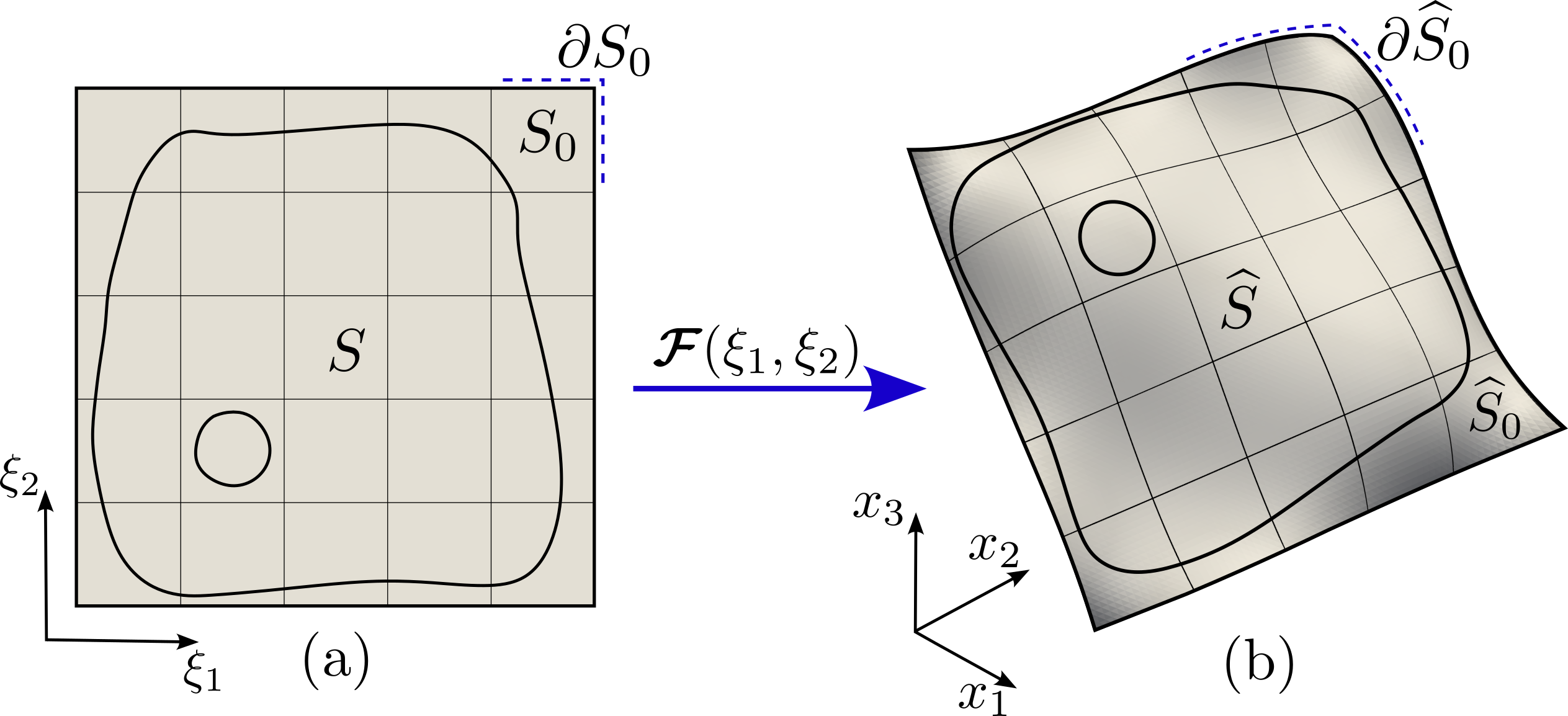}
    \caption{Fictitious parametric domain $S_0$ embedding $S$ and their images.}
    \label{fig: fictitious_domain}
\end{figure}


A standard Galerkin approximation of the variational problem now leads to solving the following second order system of ordinary differential equations (ODEs) \citep{hughes2012finite,quarteroni2009numerical}
\begin{align}
\label{eq: semi_discrete_pb}
\begin{split}
M\ddot{\bm{\textsf{u}}}(t) + K\bm{\textsf{u}}(t) &= \bm{\textsf{f}}(t) \qquad \text{for } t \in [t_0,\; t_1], \\
\bm{\textsf{u}}(t_0) &= \bm{\textsf{u}}_0,\\
\dot{\bm{\textsf{u}}}(t_0) &= \bm{\textsf{v}}_0.
\end{split}
\end{align}
where $\bm{\textsf{u}}(t)$ is the unknown coefficient vector containing all the displacement $\bm{u}_j(t)$ and rotation $\bm{\theta}_j(t)$ degrees of freedom, sorted componentwise. Hence, the mass matrix $M$ and stiffness matrix $K$ are $5 \times 5$ block matrices. While the stiffness matrix $K$ is generally fully block-coupled and its expression is rather complicated (and not needed here), the mass matrix $M$ is only partially coupled and is of much greater interest in the context of this work. Indeed, since $W$ is diagonal in \eqref{eq: bilinear_b}, $M$ is actually block diagonal. Its structure can be conveniently described through the \emph{direct sum} of matrices. Given square matrices $A_i$, their direct sum is the block diagonal matrix
\begin{equation*}
    \bigoplus_{i=1}^q A_i = \diag(A_1,\dots,A_q).
\end{equation*}
With this definition, the consistent mass matrix is expressed as $M = M_u \oplus M_\theta$, where
\begin{equation*}
    M_u = \bigoplus_{i=1}^3 \mathrm{M}_u \quad \text{and} \quad 
    M_\theta =
    \begin{pmatrix}
        \mathrm{M}^{11}_\theta & \mathrm{M}^{12}_\theta \\
        \mathrm{M}^{21}_\theta & \mathrm{M}^{22}_\theta
    \end{pmatrix}
\end{equation*}
are the mass matrices related to the displacement and rotation degrees of freedom, respectively. Their blocks $\mathrm{M}_u$ and $\mathrm{M}^{\alpha\beta}_\theta$ (denoted with straight letters) are defined as
\begin{equation}
\label{eq: Mu_M_theta}
    (\mathrm{M}_u)_{ij} = \int_S \rho_u B_i B_j \sqrt{a} \dd S, \quad \text{and} \quad (\mathrm{M}^{\alpha\beta}_\theta)_{ij} = \int_S \rho_\theta B_i B_j a^{\alpha\beta} \sqrt{a} \dd S, \qquad i,j=1,\dots,m.
\end{equation}
Clearly, $M$ (and its diagonal blocks $M_u$ and $M_\theta$) are symmetric positive definite. Those properties still hold in more general cases with non-diagonal matrices $W$, although $M$ would then have some block-coupling between displacement and rotation degrees of freedom, just as the stiffness matrix. However, although the B-spline basis functions are nonnegative, the mass matrix (and more specifically the off-diagonal blocks of $M_\theta$) may feature negative entries, due to the coefficients $a^{\alpha\beta}$ in \eqref{eq: Mu_M_theta}. This property sharply contracts with classical elasticity or plate models and might jeopardize standard mass lumping techniques. Indeed, when explicit time-stepping schemes are employed for approximately solving \eqref{eq: semi_discrete_pb}, the consistent mass $M$ is oftentimes substituted with an ad hoc (diagonal) lumped mass matrix. Various lumping techniques are known but a very popular choice is the row-sum technique \cite{hughes2012finite}. For a given $M \in \mathbb{R}^{n \times n}$, this technique is defined algebraically as
\begin{equation}
\label{eq: row_sum}
\mathcal{L}(M)=\diag(d_1,\dots,d_n)
\end{equation}
where $d_i=\sum_{j=1}^n m_{ij}$ for $i=1,\dots,n$. Practitioners directly substitute in \eqref{eq: semi_discrete_pb} the consistent mass $M$ by its lumped mass approximation $\mathcal{L}(M)$, which in our context becomes
\begin{equation*}
    \mathcal{L}(M) = \mathcal{L}(M_u) \oplus \mathcal{L}(M_\theta).
\end{equation*}
However, since $M_\theta$ is not nonnegative, the lumped mass matrix $\mathcal{L}(M_\theta)$ might end up being indefinite or even singular, both of which preclude simulations in explicit dynamics. In the next lemma, we present a sufficient condition guaranteeing positive definite lumped mass matrices. This condition depends on the angle between the covariant basis vectors, denoted $\theta(\bm{a}_1,\bm{a}_2)$.

\begin{lemma}
\label{lem: PD_cond}
If
\begin{equation}
\label{eq: sufficient_cond}
    \frac{\min_i \|\bm{a}_i\|_2}{\max_i \|\bm{a}_i\|_2} > \cos \theta(\bm{a}_1,\bm{a}_2)
\end{equation}
then $\mathcal{L}(M)$ is positive definite.
\end{lemma}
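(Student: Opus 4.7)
The plan is to exploit the block structure of $M$ and reduce the question to checking positivity of the diagonal entries of $\mathcal{L}(M)$ block by block. Since $M = M_u \oplus M_\theta$ has a zero off-diagonal block coupling displacement and rotation degrees of freedom, the row-sum operator commutes with the direct sum: $\mathcal{L}(M) = \mathcal{L}(M_u) \oplus \mathcal{L}(M_\theta)$, and the same observation further decomposes $\mathcal{L}(M_u) = \bigoplus_{i=1}^3 \mathcal{L}(\mathrm{M}_u)$. So positive definiteness of $\mathcal{L}(M)$ boils down to showing that every diagonal entry of $\mathcal{L}(\mathrm{M}_u)$ and $\mathcal{L}(M_\theta)$ is strictly positive.

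The displacement block is the easy half: using the partition-of-unity property $\sum_{j} B_j(\bm{\xi}) = 1$ of the B-spline basis, the $i$-th row sum of $\mathrm{M}_u$ collapses to $\int_S \rho_u B_i \sqrt{a}\,\dd S$, which is strictly positive because $\rho_u > 0$, $B_i \geq 0$, and $B_i$ does not vanish identically on $S$ (it belongs to the active basis). This immediately gives that $\mathcal{L}(\mathrm{M}_u)$ is diagonal with positive entries, and the same is therefore true for $\mathcal{L}(M_u)$.

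The rotational block is where condition \eqref{eq: sufficient_cond} enters. Applying the partition of unity block-row by block-row, a diagonal entry of $\mathcal{L}(M_\theta)$ in the $\alpha$-th block of rows takes the form
\begin{equation*}
    \bigl(\mathcal{L}(M_\theta)\bigr)_{ii}^{(\alpha)} = \int_S \rho_\theta\, B_i \Bigl(\sum_\beta a^{\alpha\beta}\Bigr) \sqrt{a}\,\dd S.
\end{equation*}
A sufficient condition for every such entry to be positive is the pointwise inequality $\sum_\beta a^{\alpha\beta}(\bm{\xi}) > 0$ on $S$ for $\alpha=1,2$. The core of the proof is then a purely local, two-by-two computation: inverting the metric matrix $[a_{\alpha\beta}]$ using $a_{\alpha\alpha} = \|\bm{a}_\alpha\|^2$ and $a_{12} = \|\bm{a}_1\|\|\bm{a}_2\|\cos\theta(\bm{a}_1,\bm{a}_2)$ yields, after simplification,
\begin{equation*}
    a^{\alpha 1} + a^{\alpha 2} \;=\; \frac{\|\bm{a}_\alpha\|}{a}\bigl(\|\bm{a}_\alpha\| - \|\bm{a}_\beta\|\cos\theta(\bm{a}_1,\bm{a}_2)\bigr), \qquad \{\alpha,\beta\}=\{1,2\}.
\end{equation*}
When $\cos\theta(\bm{a}_1,\bm{a}_2) \leq 0$ (the obtuse case) both sums are automatically positive; when $\cos\theta(\bm{a}_1,\bm{a}_2) > 0$ positivity of both row sums amounts to $\|\bm{a}_\alpha\|/\|\bm{a}_\beta\| > \cos\theta(\bm{a}_1,\bm{a}_2)$ for $\alpha \neq \beta$, and taking the more restrictive of the two gives precisely \eqref{eq: sufficient_cond}.

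The only real obstacle is the $2\times 2$ inversion and the bookkeeping that separates the acute from the obtuse case; everything else is a direct consequence of the partition of unity and of the block-diagonal structure of $M$. I would present the inversion explicitly, highlight that $\cos\theta(\bm{a}_1,\bm{a}_2) \leq 0$ renders the hypothesis vacuous, and then combine the two pointwise inequalities to conclude that $\mathcal{L}(M_\theta)$ — and hence $\mathcal{L}(M)$ — is diagonal with strictly positive entries, i.e. positive definite.
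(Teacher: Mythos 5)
Your proof follows essentially the same route as the paper: decompose $\mathcal{L}(M)$ into the direct sum of the displacement and rotation blocks, reduce positive definiteness to pointwise positivity of $\sum_\beta a^{\alpha\beta}$ on $S$, and obtain that condition from the explicit inverse of the $2\times 2$ metric tensor. The only genuine difference is cosmetic: you invoke the B-spline partition of unity to collapse $\sum_j B_i B_j$ to $B_i$, whereas the paper keeps the double sum and argues nonnegativity directly; both are sound and the former is slightly cleaner.

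One slip worth flagging: your explicit inversion has the indices swapped. From $[a^{\alpha\beta}] = [a_{\alpha\beta}]^{-1}$ one gets $a^{11} = a_{22}/a$ and $a^{12} = -a_{12}/a$, so
\begin{equation*}
a^{\alpha 1} + a^{\alpha 2} \;=\; \frac{a_{\beta\beta} - a_{\alpha\beta}}{a} \;=\; \frac{\|\bm{a}_\beta\|}{a}\bigl(\|\bm{a}_\beta\| - \|\bm{a}_\alpha\|\cos\theta(\bm{a}_1,\bm{a}_2)\bigr), \qquad \{\alpha,\beta\}=\{1,2\},
\end{equation*}
not $\frac{\|\bm{a}_\alpha\|}{a}(\|\bm{a}_\alpha\| - \|\bm{a}_\beta\|\cos\theta)$ as you wrote. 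The prefactor and the leading term inside the parenthesis should carry $\|\bm{a}_\beta\|$, not $\|\bm{a}_\alpha\|$. Your error is harmless here because requiring positivity for both $\alpha=1$ and $\alpha=2$ produces the same symmetric pair of inequalities $\|\bm{a}_1\|/\|\bm{a}_2\| > \cos\theta$ and $\|\bm{a}_2\|/\|\bm{a}_1\| > \cos\theta$ either way, and hence the same hypothesis \eqref{eq: sufficient_cond}. Still, the intermediate identity as stated is false and would mislead a reader who tries to verify it; fix the index before presenting this.
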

\begin{proof}
First note that
\begin{equation*}
    \mathcal{L}(M) = \left(\bigoplus_{i=1}^3 \mathcal{L}(\mathrm{M}_u)\right) \oplus \left(\bigoplus_{\alpha=1}^2 \mathrm{L}^\alpha\right),
\end{equation*}
where $\mathrm{L}^\alpha = \sum_{\beta=1}^2 \mathcal{L}(\mathrm{M}^{\alpha\beta}_\theta)$. Since $\mathrm{M}_u$ is nonnegative, our only concern is ensuring that $\mathrm{L}^\alpha$ is positive definite. Yet,
\begin{equation}
\label{eq: L^alpha_ii}
    (\mathrm{L}^\alpha)_{ii} = \sum_{\beta=1}^2 \sum_{j=1}^m (\mathrm{M}^{\alpha\beta}_\theta)_{ij} = \int_S \rho_\theta \left(\sum_{j=1}^m B_i B_j\right) \left(\sum_{\beta=1}^2 a^{\alpha\beta}\right) \sqrt{a} \dd S.
\end{equation}
Condition \eqref{eq: sufficient_cond} guarantees that $\sum_{\beta=1}^2 a^{\alpha\beta} > 0$ and therefore $(\mathrm{L}^\alpha)_{ii}>0$. Indeed,
\begin{equation*}
    \frac{\min_i \|\bm{a}_i\|_2}{\max_i \|\bm{a}_i\|_2} > \cos \theta(\bm{a}_1,\bm{a}_2), \iff
    \begin{cases}
        \frac{\|\bm{a}_1\|_2}{\|\bm{a}_2\|_2} > \cos \theta(\bm{a}_1,\bm{a}_2), \\
        \frac{\|\bm{a}_2\|_2}{\|\bm{a}_1\|_2} > \cos \theta(\bm{a}_1,\bm{a}_2),
    \end{cases}
    \iff
    \begin{cases}
        a_{11} = \bm{a}_1 \cdot \bm{a}_1 > \bm{a}_2 \cdot \bm{a}_1 = a_{21}, \\
        a_{22} = \bm{a}_2 \cdot \bm{a}_2 > \bm{a}_1 \cdot \bm{a}_2 = a_{12}.
    \end{cases}
\end{equation*}
Thus, recalling \eqref{metric_tensor_relation},
\begin{align*}
    a^{11} + a^{12} = a^{-1}(a_{22}-a_{12})>0, \\
    a^{21} + a^{22} = a^{-1}(a_{11}-a_{21})>0,
\end{align*}
since $a$ (the determinant of the surface metric tensor) is positive. Consequently, $\mathcal{L}(M)$ is positive definite.
\end{proof}

\begin{remark}
Although sufficient, the condition stated in \Cref{lem: PD_cond} is not necessary. Indeed, the integral in \eqref{eq: L^alpha_ii} may be positive even though the integrand becomes negative pointwise. Anyway, the condition is not very restrictive and essentially means that the norm of the covariant basis vectors should be balanced and the angle between them should not be too small. In the sequel, we will assume this condition is always satisfied.  
\end{remark}

Although more advanced mass lumping strategies have been developed for isogeometric analysis \cite{anitescu2019isogeometric,nguyen2023towards,li2022significance}, most of them are only applicable to boundary-fitted discretizations. To the best of our knowledge, the only methods supporting single-patch, multi-patch as well as trimmed geometries, are based on algebraic generalizations of the row-sum technique that effectively enlarge its bandwidth \cite{voet2023mathematical,voet2025mass}. However, the resulting lumped mass matrices are no longer diagonal and we will stick to the classical row-sum technique for simplicity. Apart from facilitating the solution of linear systems, mass lumping is also praised for increasing the critical time step of explicit time integration schemes, which is inversely proportional to the largest discrete frequency of the system (see \eqref{eq: CFL_condition}). The constraint on the step size is often referred to as the Courant–Friedrichs–Lewy (CFL) condition.

For shell problems, mass lumping is often combined with a form of rotational mass scaling that artificially increases the inertia of rotational degrees of freedom \cite{benson2010isogeometric,voet2025theoretical}. Although it increases the critical time step, it might also heavily degrade the accuracy of the solution. Thus, we have not incorporated any form of mass scaling in this work.

Mass lumping has always played a central role in explicit dynamics and became even more prominent after Leidinger \cite{leidinger2020explicit} showed that the largest generalized eigenvalues for the row-sum lumped mass remained bounded regardless of how badly cut the elements were, provided the spline space was sufficiently smooth. This finding has since then been thoroughly investigated, both numerically \cite{messmer2021isogeometric,messmer2022efficient,coradello2021accurate,radtke2024analysis} and more theoretically \cite{stoter2023critical,bioli2025theoretical} and is by now relatively well understood. It turns out this ``blessing of smoothness'' is more of a double-edged sword. Indeed, while the largest eigenvalues may no longer depend on the cut elements, the smallest ones do and converge to zero at a rate that depends on the polynomial degree and cut configuration \cite{bioli2025theoretical}. These eigenvalues bring in spurious oscillatory modes in the low-frequency spectrum \cite{coradello2021accurate,radtke2024analysis} and if those modes are unluckily activated, they may cause spurious oscillations in the solution itself. Their disastrous effect is evident from the recent counter-examples reported in \cite{voet2025stabilization} for 1D and 2D wave problems. Thus, mass lumping is not a miracle and merely trades one problem for another. Once spurious eigenvalues and modes are lying in the low-frequency spectrum, filtering them out is not so obvious. Instead, the most convenient way of eliminating them is to prevent them from surfacing in the first place.

Fortunately, spurious high-frequency modes for the consistent mass and spurious low-frequency modes for the lumped mass have exactly the same origin and any solution for the former is also a solution for the latter. Among them are stabilization techniques based on local polynomial extensions \cite{buffa2020minimal,burman2023extension,voet2025stabilization} and extended B-splines \cite{hollig2003finite,hollig2005introduction,marussig2017stable}. In \cite{voet2025stabilization}, the authors used the former for stabilizing the discrete formulation and subsequently formed the row-sum lumped mass from the stabilized consistent mass. This technique removes the large spurious (trimming-related) eigenvalues for the consistent mass and prevents the corresponding modes from surfacing in the low-frequency spectrum when the mass matrix is lumped. Although this solution has only been studied for a simple wave equation, we believe it also applies to plate and shell models, where similar issues might arise. We will demonstrate it in \Cref{se: numerical_experiments} but before doing so, the stabilization technique is recalled in the next section.

\section{Stabilization}
\label{se: stabilization}
In this section, we explain how to build a stable approximation space that is later used for approximating both the displacement and rotation fields. This introduction closely follows the presentation in \cite{buffa2020minimal,burman2023extension,voet2025stabilization}, where a similar construction was employed for the wave equation and the Poisson problem. 

We consider the B-spline basis $\mathcal{B}=\{ B_i \}_{i=1}^{n}$ for a spline space in the trimmed parametric domain $S$. As explained earlier, for trimmed geometries, a standard tensor product basis is first constructed over a fictitious domain $S_0$ containing the parametric domain $S$ and $\mathcal{B}$ then collects all basis functions whose support intersects the parametric domain. Similarly, $\mathcal{T}_h$ collects all active mesh elements. Unfortunately, the support of some of the functions in $\mathcal{B}$ may only intersect the domain over arbitrarily small elements and those functions typically cause great stability and conditioning issues \cite{de2017condition,de2023stability}. The idea presented in \cite{buffa2020minimal} is to locally substitute those functions with the extension of polynomial segments from large neighboring elements. For a fixed tolerance $\gamma \in [0,1]$, an element $T \in \mathcal{T}_h$ is labeled as large (or good) if
\begin{equation}
\label{eq: good_element}
    |T \cap S | \geq \gamma |T|
\end{equation}
and is small (or bad) otherwise. The sets of large and small elements, denoted $\mathcal{T}_h^L$ and $\mathcal{T}_h^S$ respectively, partition $\mathcal{T}_h$ into disjoint sets such that $\mathcal{T}_h = \mathcal{T}_h^L \cup \mathcal{T}_h^S$. Given a small element $T \in \mathcal{T}_h^S$, there always exist a large neighboring element $T' \in \mathcal{T}_h^L$ provided the boundary is sufficiently regular and the mesh sufficiently fine \cite{burman2022cutfem}. This large neighboring element may either be chosen such that it maximizes $|T' \cap S|$ among all neighbors \cite{buffa2020minimal} or minimizes the distance between the centers of mass of $|T \cap S|$ and $|T' \cap S|$ \cite{burman2023extension}. The B-splines on the small element $T$ are then locally substituted with their extension from the good neighbor $T'$, as in \Cref{fig: poly_ext}. In 1D (\Cref{fig: 1D_poly_ext}), this technique coincides with extended B-splines \cite{hollig2005introduction,marussig2017stable}. However, in dimension $d \geq 2$ (\Cref{fig: 2D_poly_ext}), independent extensions may create discontinuities between neighboring elements. Consequently, the approximation space is generally non-conforming and is not a subspace of the original spline space, which precludes forming the system matrices from their non-stabilized counterpart. Instead, the assembly routine of element matrices must be modified, as described in \cite[][Algorithm 5.1]{voet2025stabilization}. Adapting the algorithm presented therein to other operators and right-hand sides is rather straightforward. Alternatively, one may restore conformity by combining the extension with a projection step \cite{burman2023extension}, but this is not necessary from an approximation point of view. Another possible remedy are extended B-splines \cite{hollig2005introduction,marussig2017stable}, although the increased overlap of extended basis functions may slightly reduce the sparsity of system matrices. On the contrary, the local extension technique ensures the sparsity pattern of stabilized system matrices is contained in their non-stabilized counterpart \cite[][Lemma 5.2]{voet2025stabilization}, which also allows the deployment of advanced mass lumping techniques \cite{voet2025mass}, if need be.

In any case, the extension procedure generally does not preserve the positivity of the B-spline basis (see \Cref{fig: 1D_poly_ext}). Consequently, the stabilized consistent mass matrix may feature negative entries and again the row-sum technique no longer guarantees positive definite lumped mass matrices. Fortunately, the magnitude of such entries is typically very small unless large elements are stabilized. Thus, indefinite lumped mass matrices are very unlikely provided $\gamma$ is sufficiently small. In all our experiments, we chose $\gamma=0.1$ and never encountered any issue. More generally, values of $\gamma$ between $0.05$ and $0.15$ are equally good choices. For a more complete overview of polynomial extension techniques and implementation aspects, we refer to \cite{hollig2003finite,burman2023extension,voet2025stabilization} and the references therein. It is important to realize that none of these techniques are restricted to smooth spline discretizations and also apply to classical $C^0$ finite elements.

\begin{figure}[H]
     \centering
     \begin{subfigure}[t]{0.48\textwidth}
    \centering
    \includegraphics[width=\textwidth]{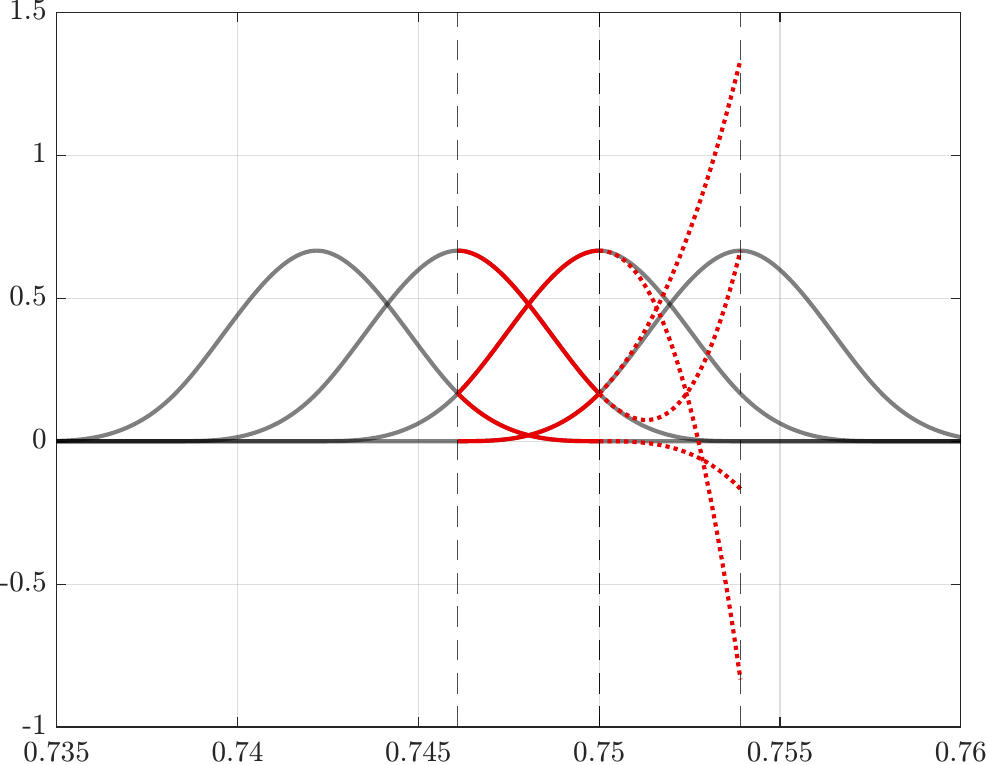}
    \caption{1D cubic $C^2$ splines}
    \label{fig: 1D_poly_ext}
     \end{subfigure}
     \hfill
     \begin{subfigure}[t]{0.48\textwidth}
    \centering
    \includegraphics[width=\textwidth]{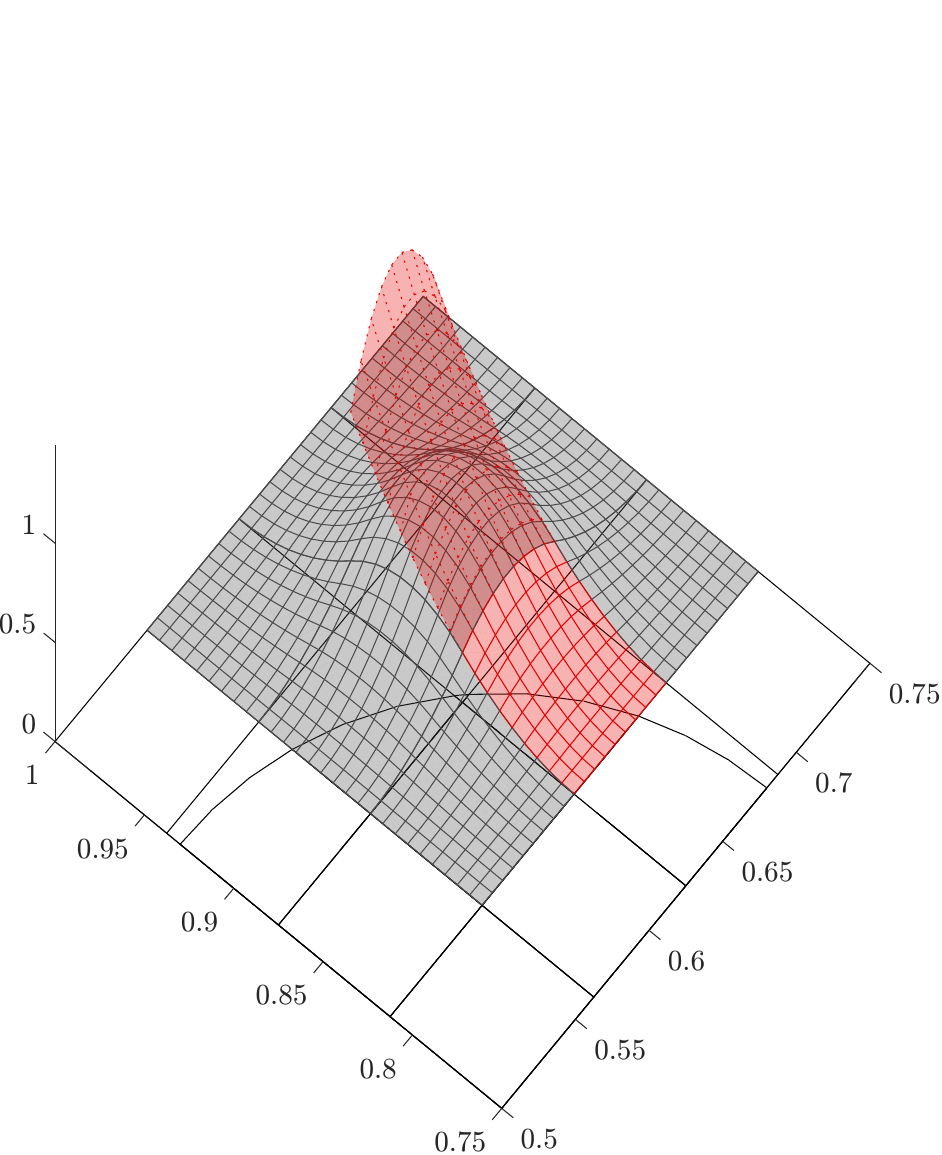}
    \caption{2D quadratic $C^1$ spline}
    \label{fig: 2D_poly_ext}
     \end{subfigure}
     \hfill
    \caption{Polynomial extension technique. The polynomial segments on a large element (in red) are extended into a small neighboring element (dotted red).}
    \label{fig: poly_ext}
\end{figure}

\section{Numerical experiments}
\label{se: numerical_experiments}
In this section, we present a few numerical experiments showing on the one hand the alarming effect mass lumping may have on the numerical solution and on the other hand how the polynomial extension technique might resolve the issue. All our experiments are carried out in GeoPDEs \cite{vazquez2016new}, an open-source MATLAB/Octave software package for isogeometric analysis. Our experiments range from academic examples to more realistic industrial ones, featuring irregularly shaped cut elements. In all our experiments, we chose $\gamma=0.1$ for classifying large and small elements.

\begin{example}[Plate externally trimmed]
\label{ex: plate_externally_trimmed}
Our simplest example is a trimmed square plate of side length $L=1$ m (see \Cref{fig: trimmed_square_plate}). In this case, the shell model of \Cref{se: shell_theory} reduces to a plate one. For illustrating how mass lumping may ruin the simulation, we consider a manufactured solution whose displacement and rotation fields are
\begin{equation*}
    \bm{u}(\bm{x},t) = w(\bm{x})\phi(t) \bm{e}_3, \quad \text{and} \quad \bm{\theta}(\bm{x},t) = - \sum_{i=1}^2 w_{,i}(\bm{x})\phi(t)\bm{a}^i,
\end{equation*}
where the spatial part $w \colon S \to \mathbb{R}$ and temporal part $\phi \colon [0,\; +\infty) \to \mathbb{R}$ are defined as
\begin{equation*}
    w(\bm{x}) = a_0 w_1 w_2 w_n(\bm{x}), \quad \text{and} \quad \phi(t) = \sin(\omega t),
\end{equation*}
with the functions
\begin{align*}
    w_\alpha(\bm{x}) &= e^{\frac{2x_\alpha-1}{b_0}}+e^{\frac{-2x_\alpha-1}{b_0}}, \\
    w_n(\bm{x}) &= \cos\left(4 \pi n_0(x_1^2+x_2^2)\right),
\end{align*}
with parameter values $a_0 = 10$ cm, $b_0=0.3$, $n_0=5$, $\omega = \frac{1}{10L}\sqrt{\frac{E}{\rho}}$, unit Young modulus $E$ and density $\rho$ and Poisson ratio $\nu=0.25$.

Neumann (or type 2) boundary conditions are prescribed over the entire boundary and are computed from the exact solution, just as the source term and initial conditions. Snapshots of the vertical component of the manufactured displacement field $(u_3)$ are shown in \figref{fig: trimmed_square_plate_sol}{e$_1$-e$_3$} for three different times $t_1$-$t_3$ as indicated in \figref{fig: trimmed_square_plate_sol}{a}. The plate is discretized with maximally smooth B-splines of degree $p$ over a background grid of mesh size $h$. The left and right (resp. top and bottom) boundaries are at a distance of $\delta$ from the nearest vertical (resp. horizontal) grid line. For future reference, we denote $\epsilon=\delta/h$ the relative trimming parameter. The numerical solutions are computed with (1) the consistent mass, (2) the lumped mass and (3) the stabilized lumped mass for various spline orders $p$, trimming parameters $\epsilon$ and thicknesses $\tau=\{1,\; 5,\; 10\}$ cm. For the stabilized lumped mass, the approximation space is first stabilized with the polynomial extension technique recalled in \Cref{se: stabilization} before lumping the mass matrix with the standard row-sum technique. Stabilizing the mass matrix prior to mass lumping filters out the spurious (trimming-related) outlier frequencies and modes, thereby preventing them from moving toward the low-frequency part of the spectrum when the mass matrix is lumped. Note that computing the numerical solution for the (non-stabilized) consistent mass with an explicit scheme is practically infeasible, due to the stringent constraint on the step size. Hence, we have used the Central Difference method (an explicit Newmark method) for the stabilized and non-stabilized lumped mass solutions, while relying on an implicit unconditionally stable Newmark method for the consistent mass solution, which was computed with the same step size as the explicit solutions to ensure a comparable error in time.

Snapshots of the solutions are provided in \Cref{fig: trimmed_square_plate_sol} for the consistent (c$_1$-c$_3$), lumped (l$_1$-l$_3$) and stabilized lumped (s$_1$-s$_3$) solutions at three different times $t_1$-$t_3$. As shown in \figref{fig: trimmed_square_plate_sol}{s$_1$-s$_3$}, the stabilization technique drastically improves the accuracy of the lumped mass solution. Indeed, while the solution for the consistent mass (\figref{fig: trimmed_square_plate_sol}{c$_1$-c$_3$}) closely resembles the exact solution (\figref{fig: trimmed_square_plate_sol}{e$_1$-e$_3$}), the solution for the lumped mass is utterly inaccurate and features spurious oscillations nears the corners of the plate (\figref{fig: trimmed_square_plate_sol}{l$_1$-l$_3$}), even when the exact solution vanishes. As a matter of fact, the numerical solution in this case is completely worthless and further deteriorates as the spline order and slenderness increase or as the trimming parameter decreases (see \Cref{fig: trimmed_square_plate_err_cons_lumped}). The error is much more pronounced in the $L^\infty$ norm that measures pointwise inaccuracies (\Cref{fig: trimmed_square_plate_err_cons_lumped_refinement}). Although refining the mesh (i.e., decreasing $h/L$) improves the lumped mass solution (see \Cref{fig: trimmed_square_plate_err_cons_lumped_refinement}), an acceptable accuracy might only be achieved for infeasibly small mesh sizes. Therefore, refining the mesh might not be a viable solution due to computer limitations. Remarkably, stabilizing the space prior to lumping the mass completely eliminates the oscillations. As shown in \Cref{fig: trimmed_square_plate_err_cons_stab_lumped}, the error for the stabilized lumped mass now perfectly overlaps with the error for the consistent mass. We must stress that the improvement is not attributed to an improvement of the lumping technique per se, since both the stabilized and non-stabilized lumped mass solutions are computed with the row-sum technique. However, its accuracy is space (and even basis) dependent, which again hints at modifying the space prior to lumping the mass.

\begin{figure}[H]
    \centering
    \includegraphics[width=0.5\textwidth]{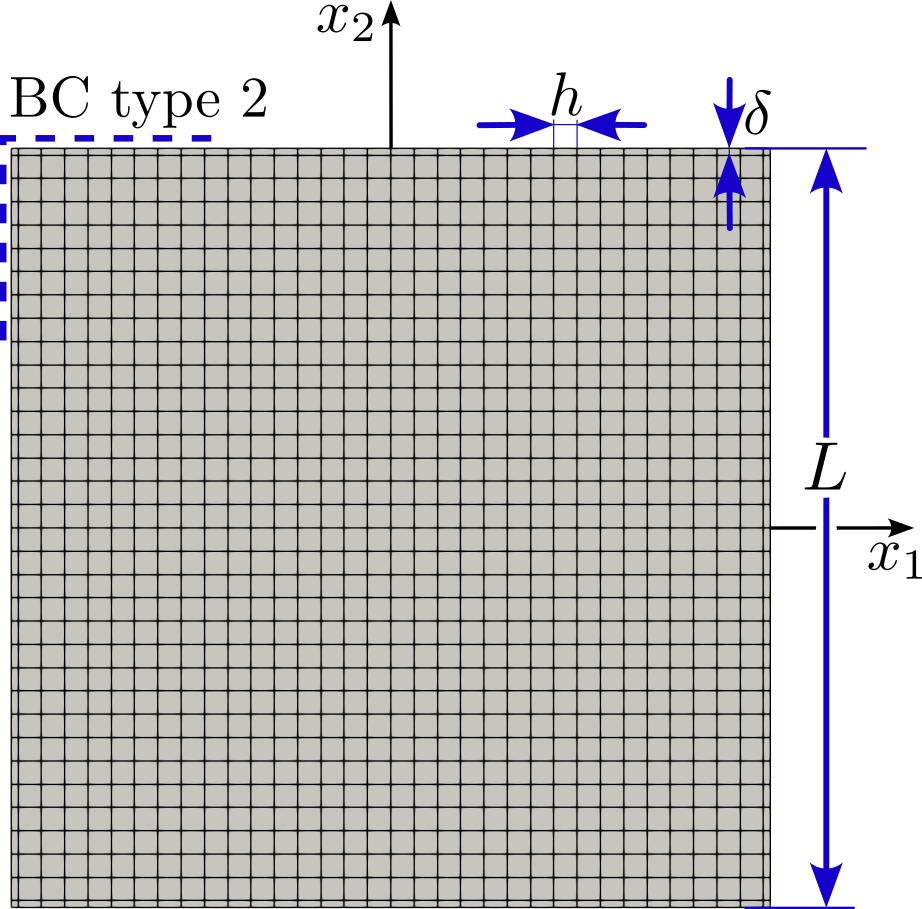}
    \caption{\Cref{ex: plate_externally_trimmed}: Trimmed square plate.}
    \label{fig: trimmed_square_plate}
\end{figure}

\begin{figure}[H]	
    \centering
    \includegraphics[width=0.7\textwidth]{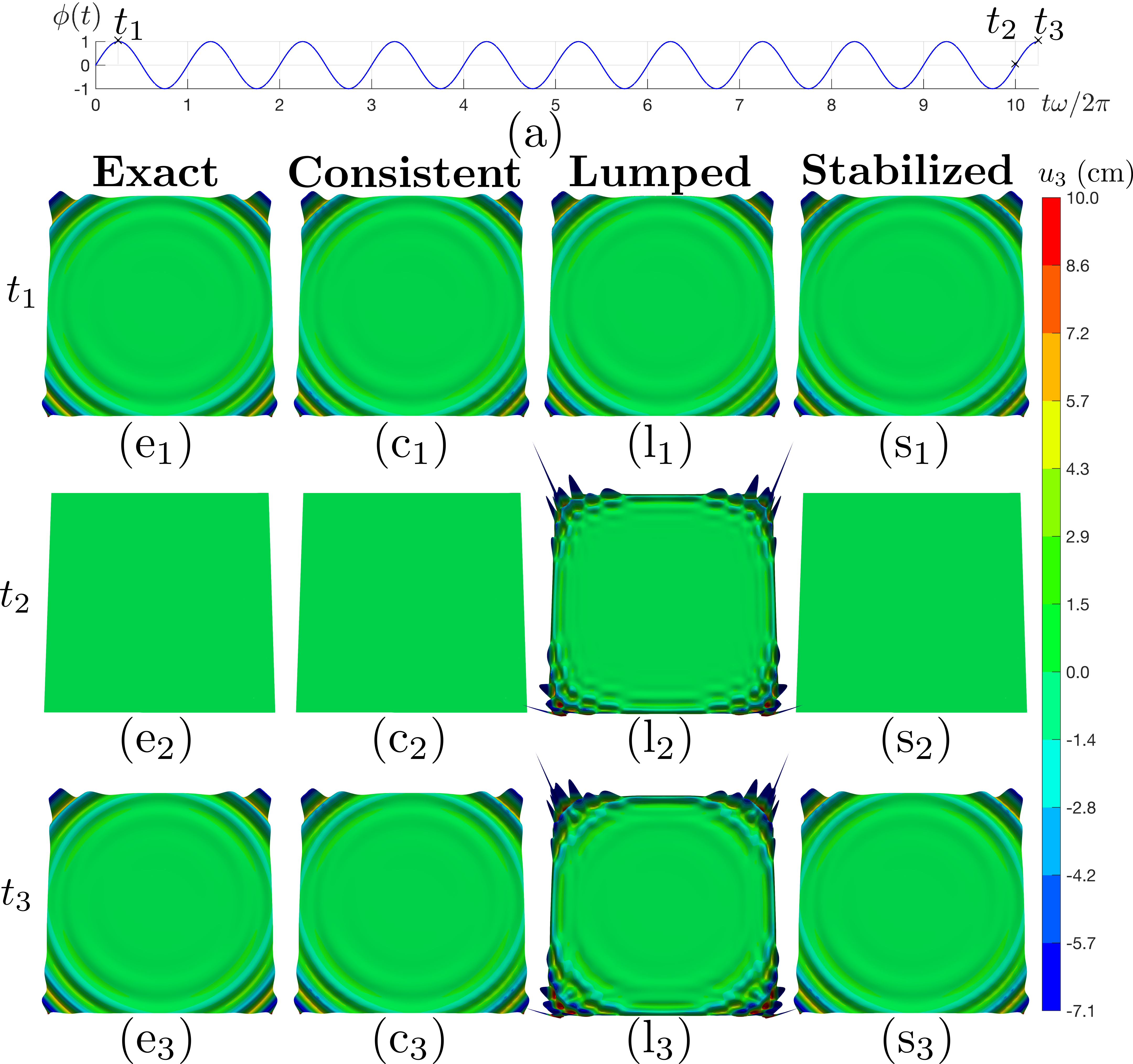}
    \caption{\Cref{ex: plate_externally_trimmed}: Snapshots of the exact and numerical solutions.}
    \label{fig: trimmed_square_plate_sol}
\end{figure}










\begin{figure}[H]	
    \centering
    \begin{subfigure}[b]{0.27\textwidth}\includegraphics[height=4.0cm]{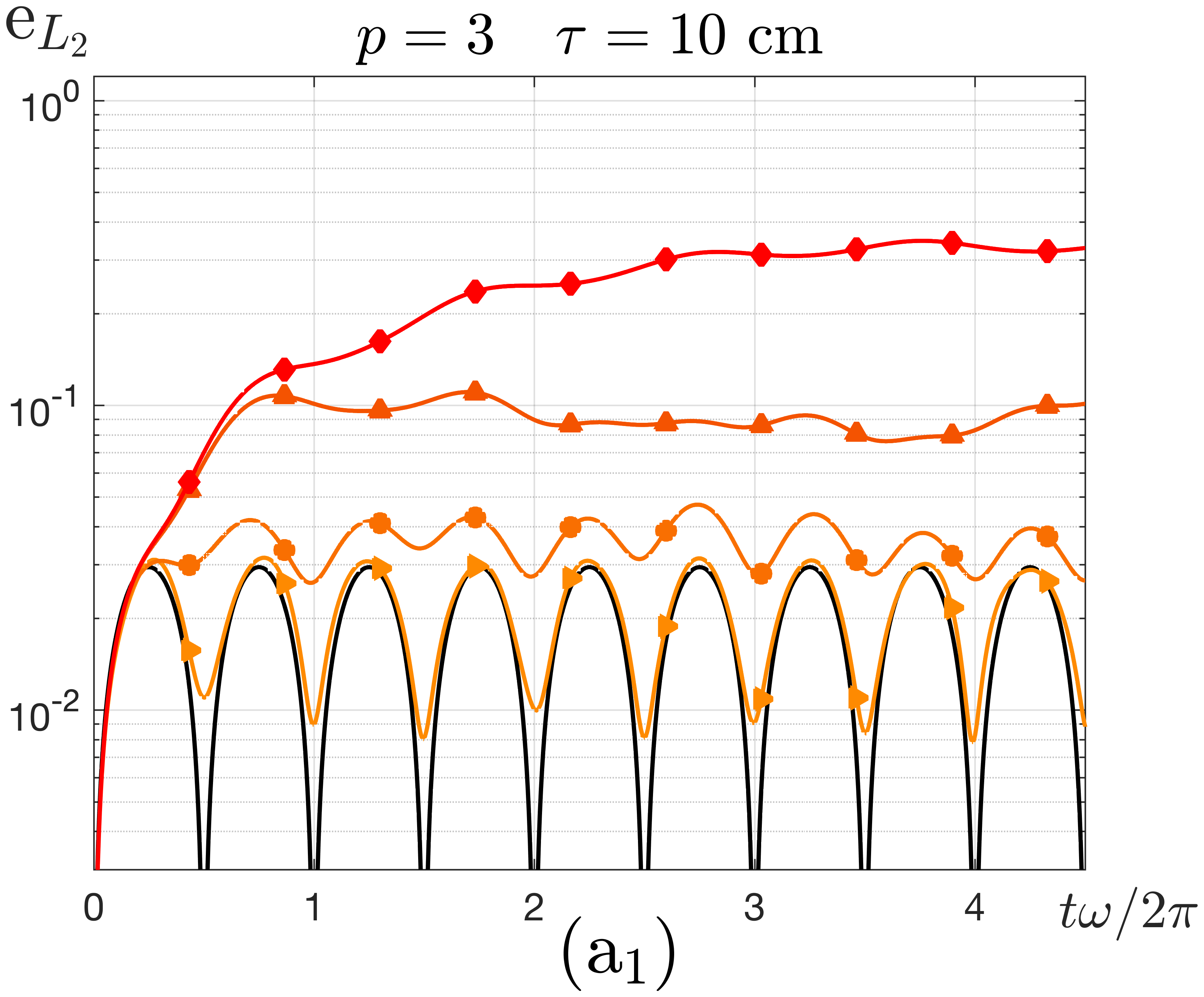}\end{subfigure} 
    \begin{subfigure}[b]{0.27\textwidth}\includegraphics[height=4.0cm]{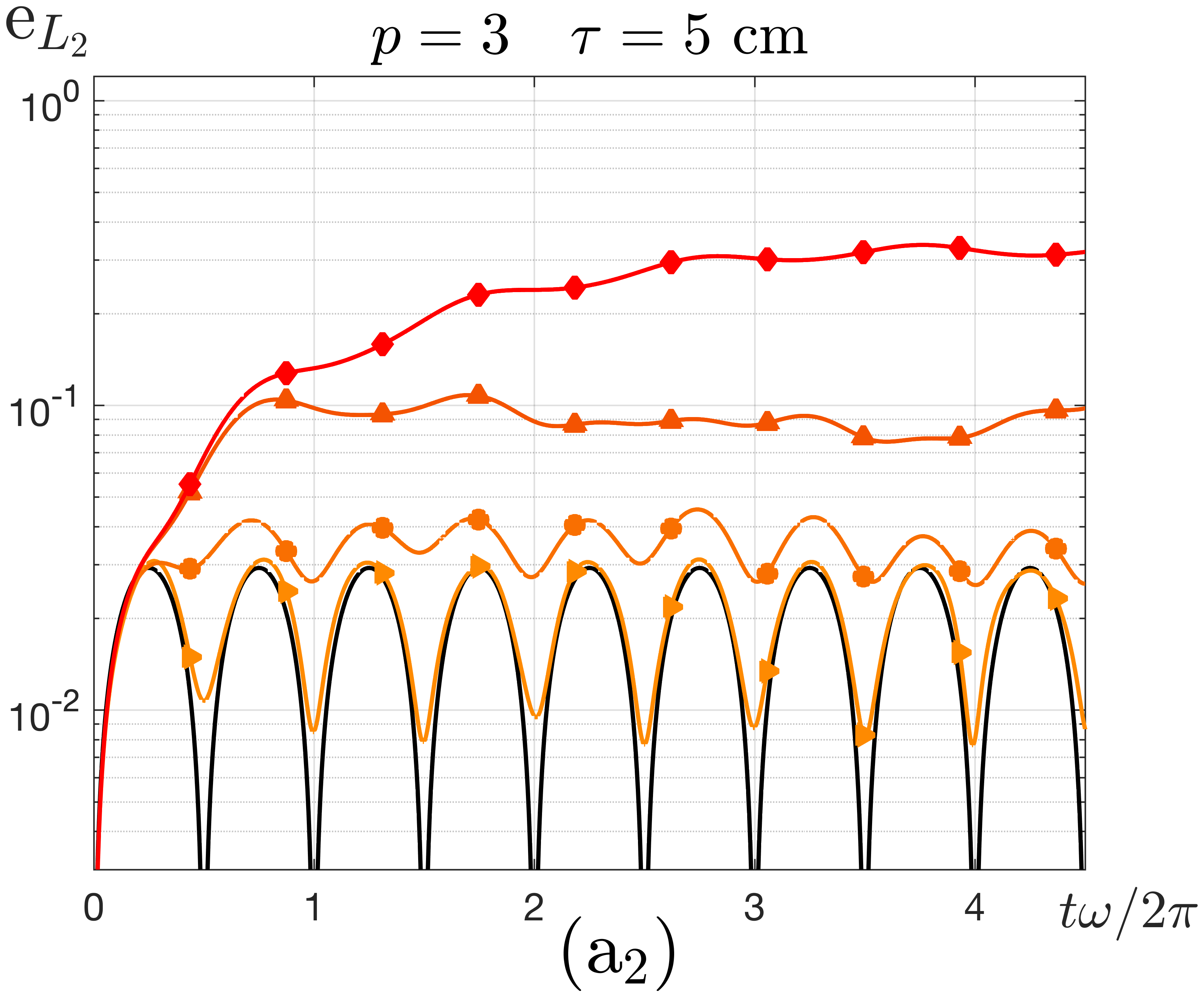}\end{subfigure} 
    \begin{subfigure}[b]{0.44\textwidth}\includegraphics[height=4.0cm]{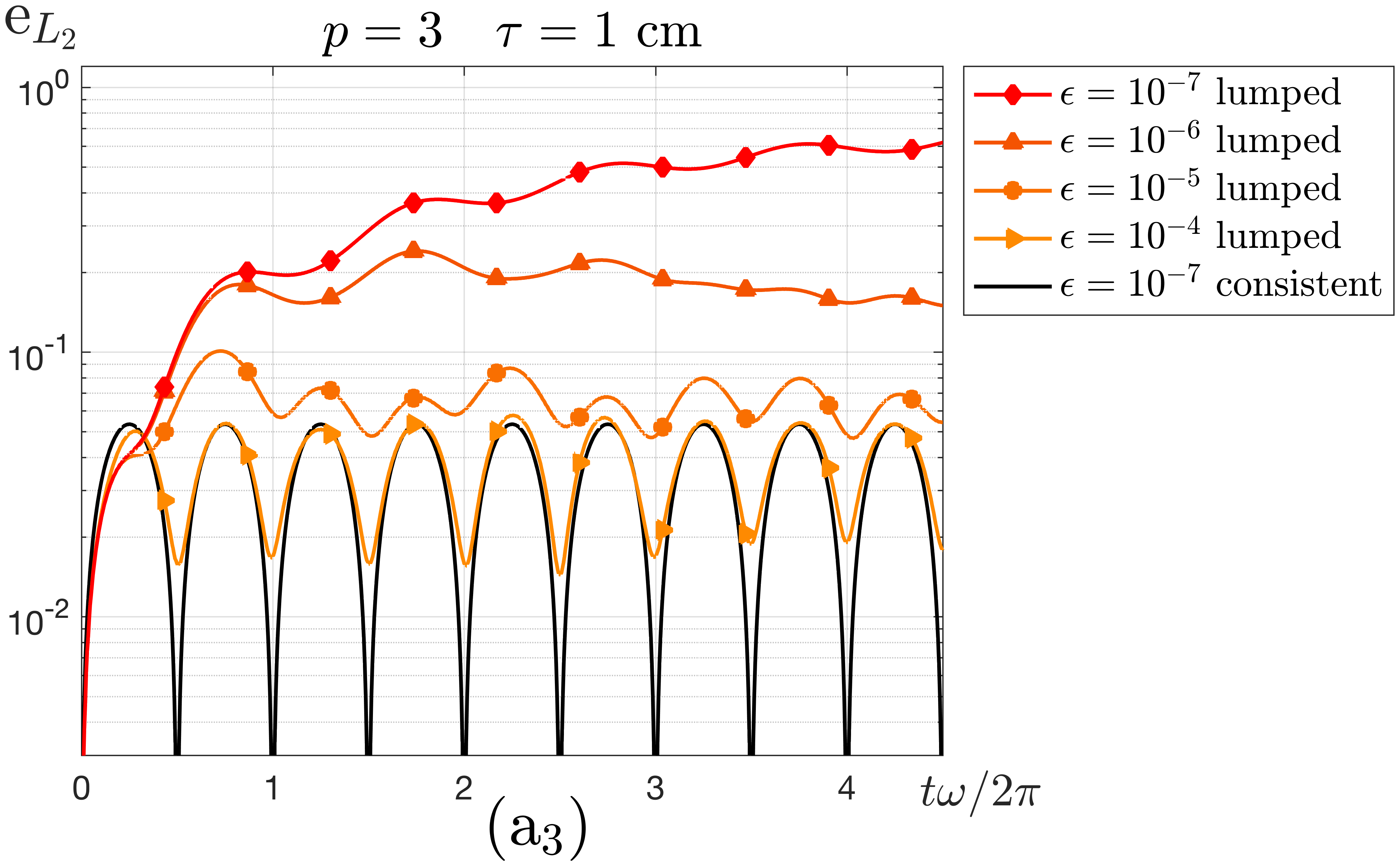}\end{subfigure}\\
    \begin{subfigure}[b]{0.27\textwidth}\includegraphics[height=4.0cm]{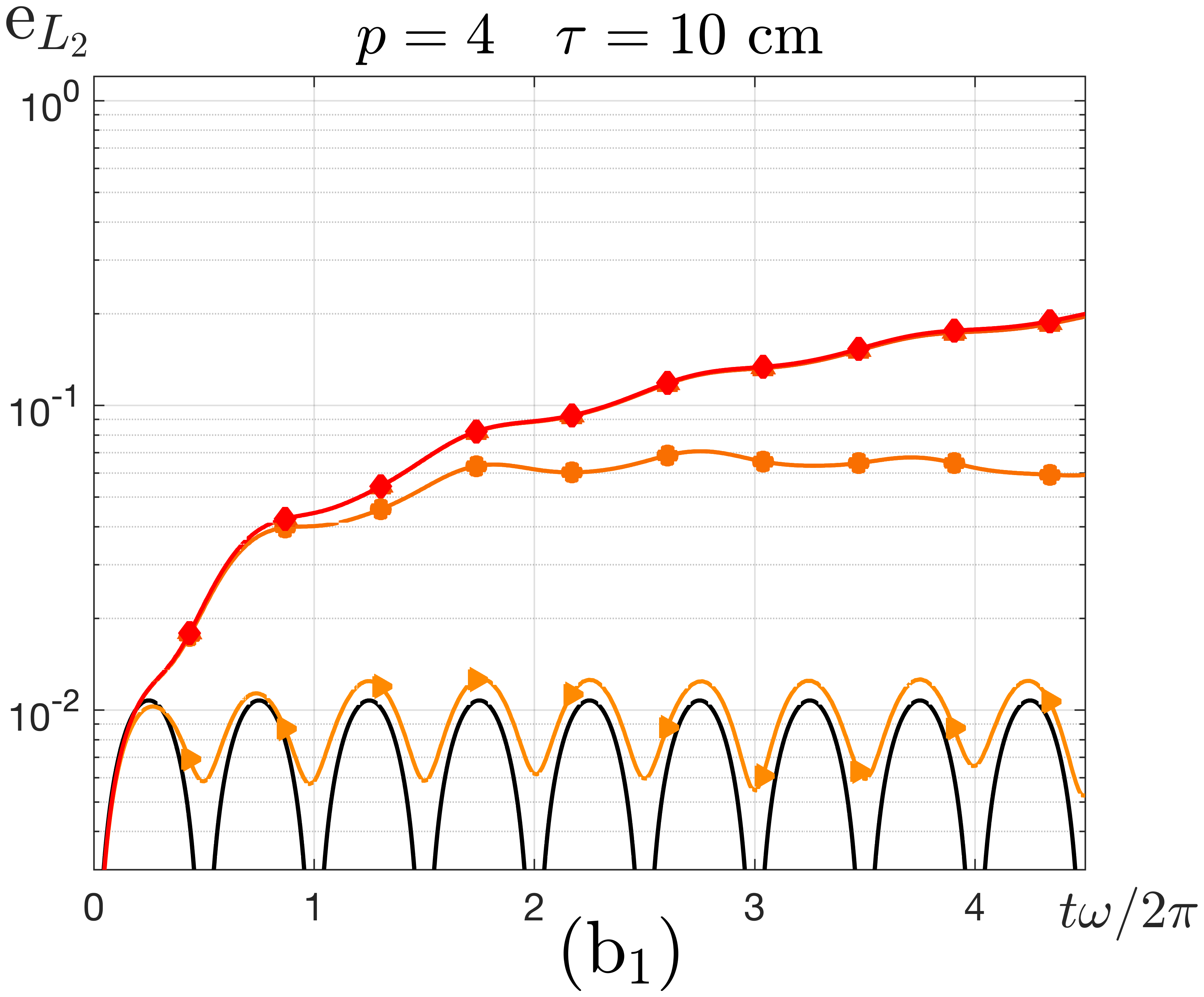}\end{subfigure}
    \begin{subfigure}[b]{0.27\textwidth}\includegraphics[height=4.0cm]{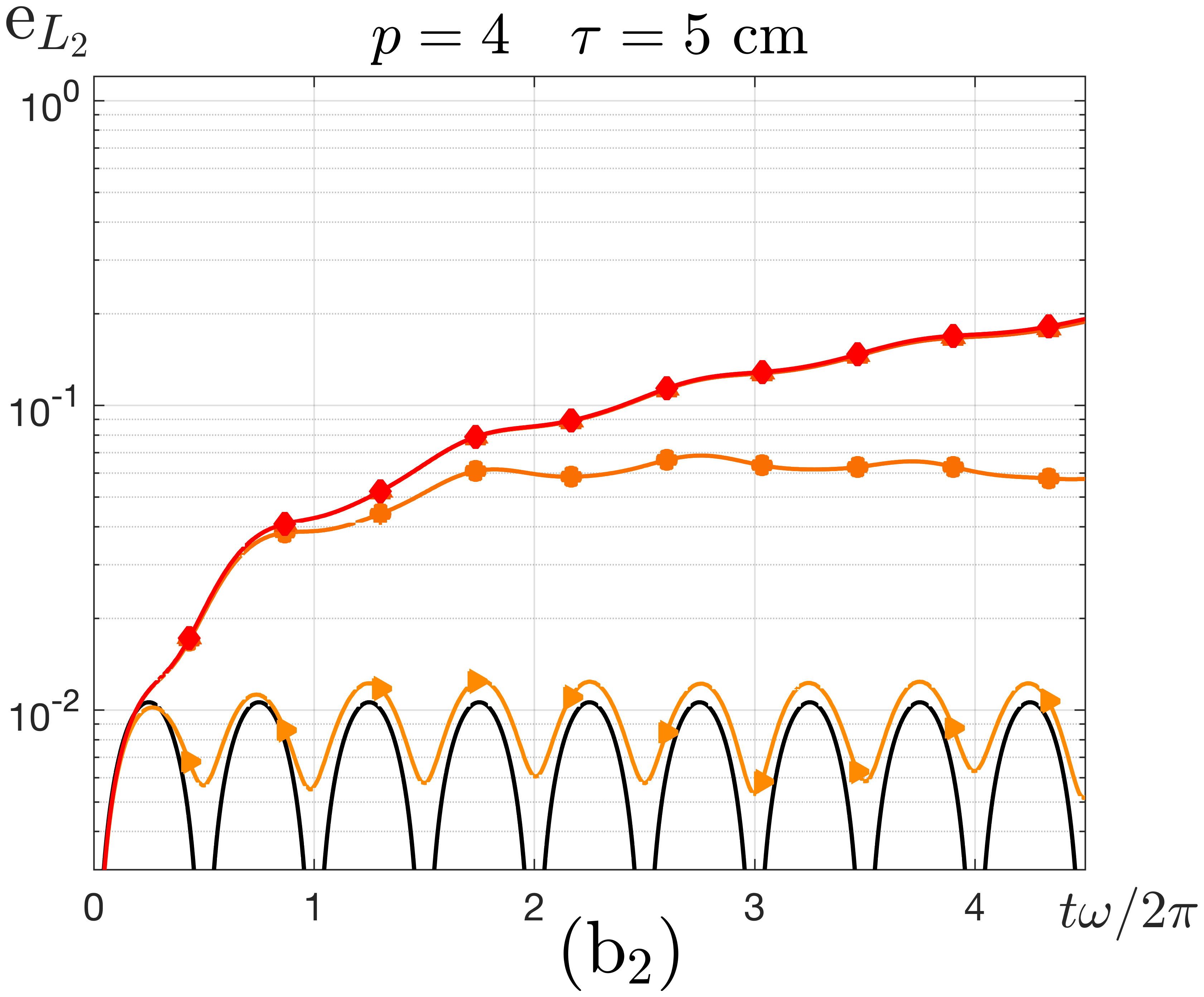}\end{subfigure}
    \begin{subfigure}[b]{0.44\textwidth}\includegraphics[height=4.0cm]{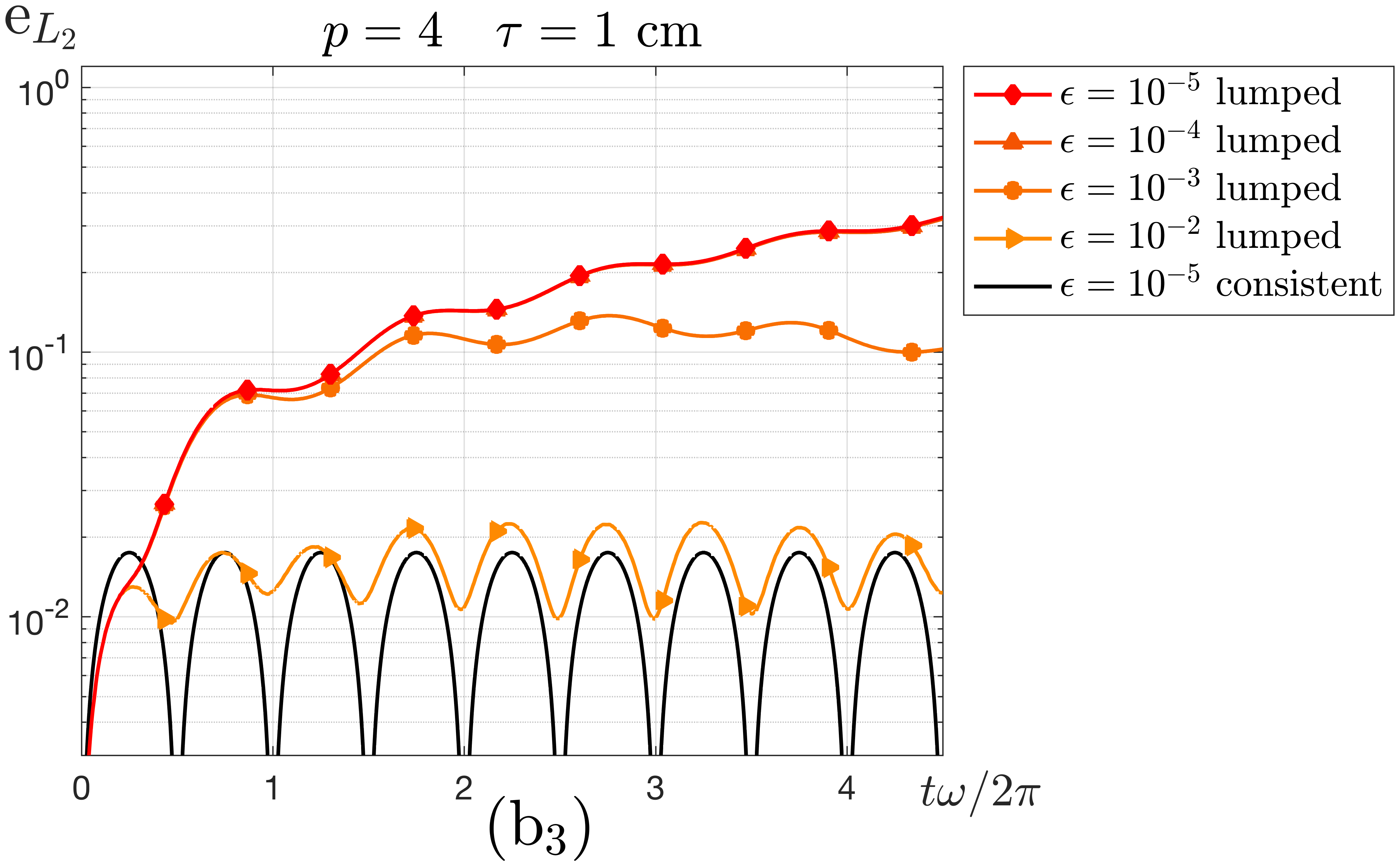}\end{subfigure}
    \caption{\Cref{ex: plate_externally_trimmed}: $L^2$ error for the consistent and lumped mass solutions for different spline orders, thicknesses and trimming parameters.} 
    \label{fig: trimmed_square_plate_err_cons_lumped}
\end{figure}

\begin{figure}[H]	
    \centering 
    \begin{subfigure}[b]{0.27\textwidth}\includegraphics[height=4.0cm]{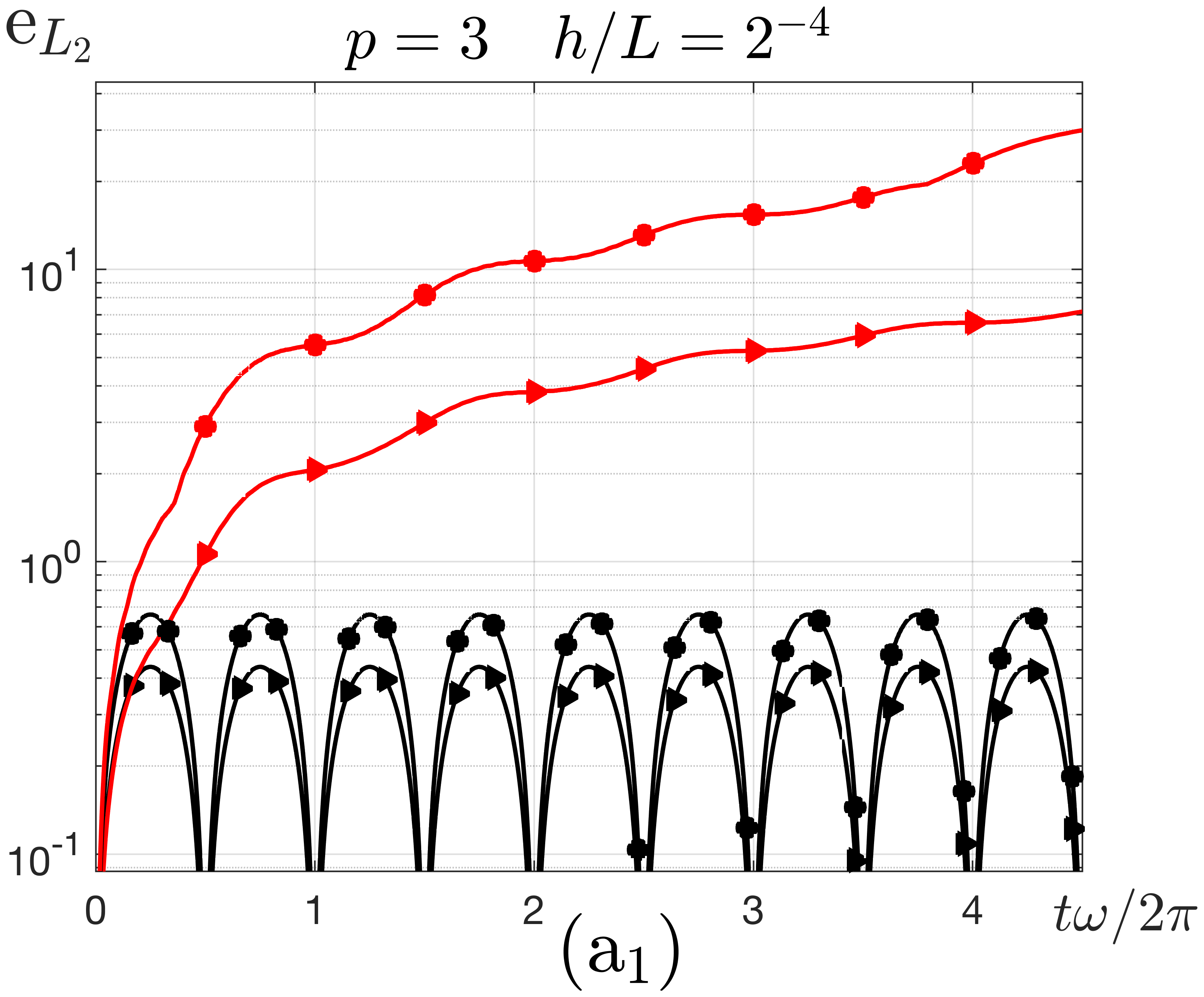}
    \end{subfigure}
    \begin{subfigure}[b]{0.27\textwidth}\includegraphics[height=4.0cm]{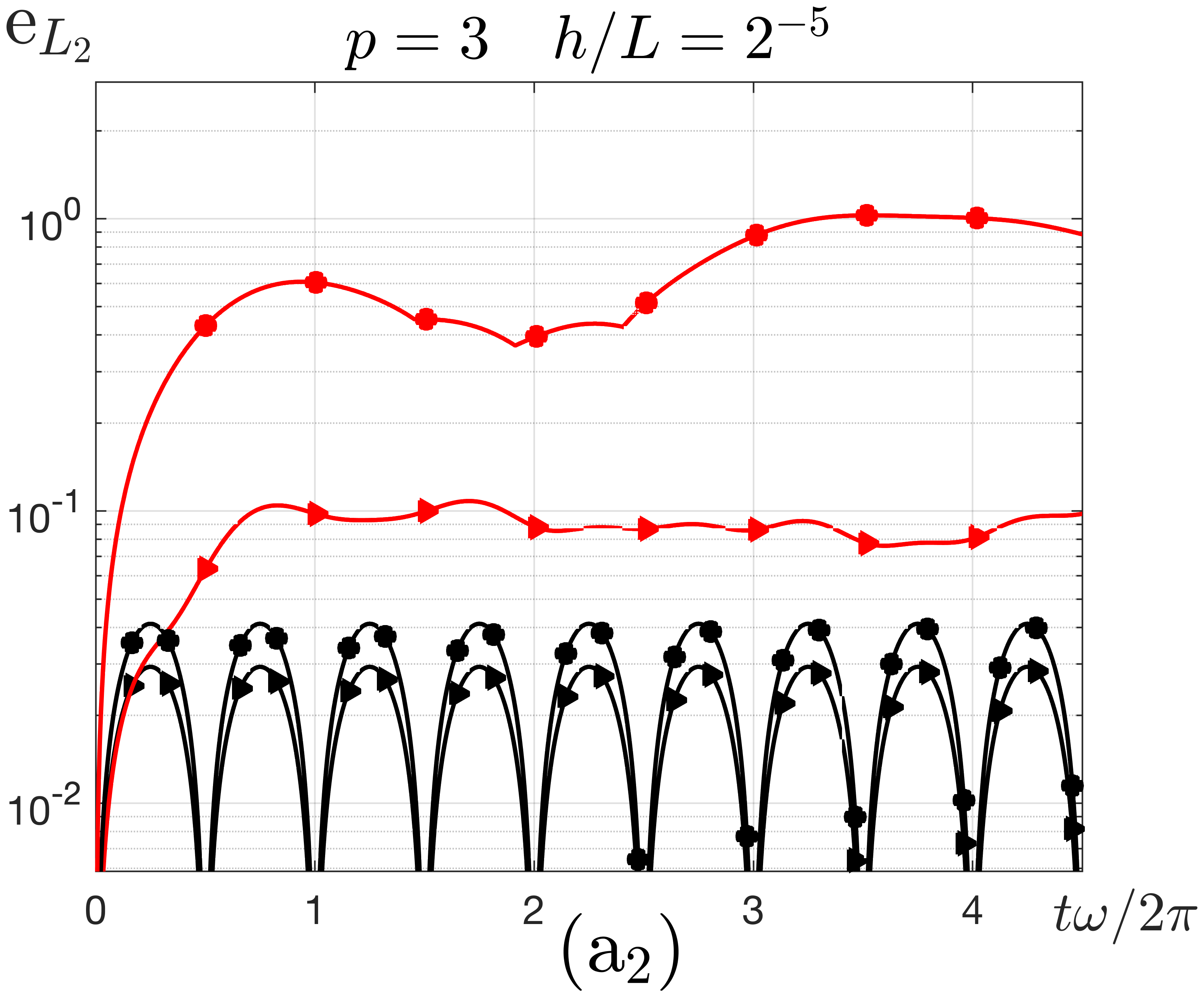}
    \end{subfigure}
    \begin{subfigure}[b]{0.44\textwidth}\includegraphics[height=4.0cm]{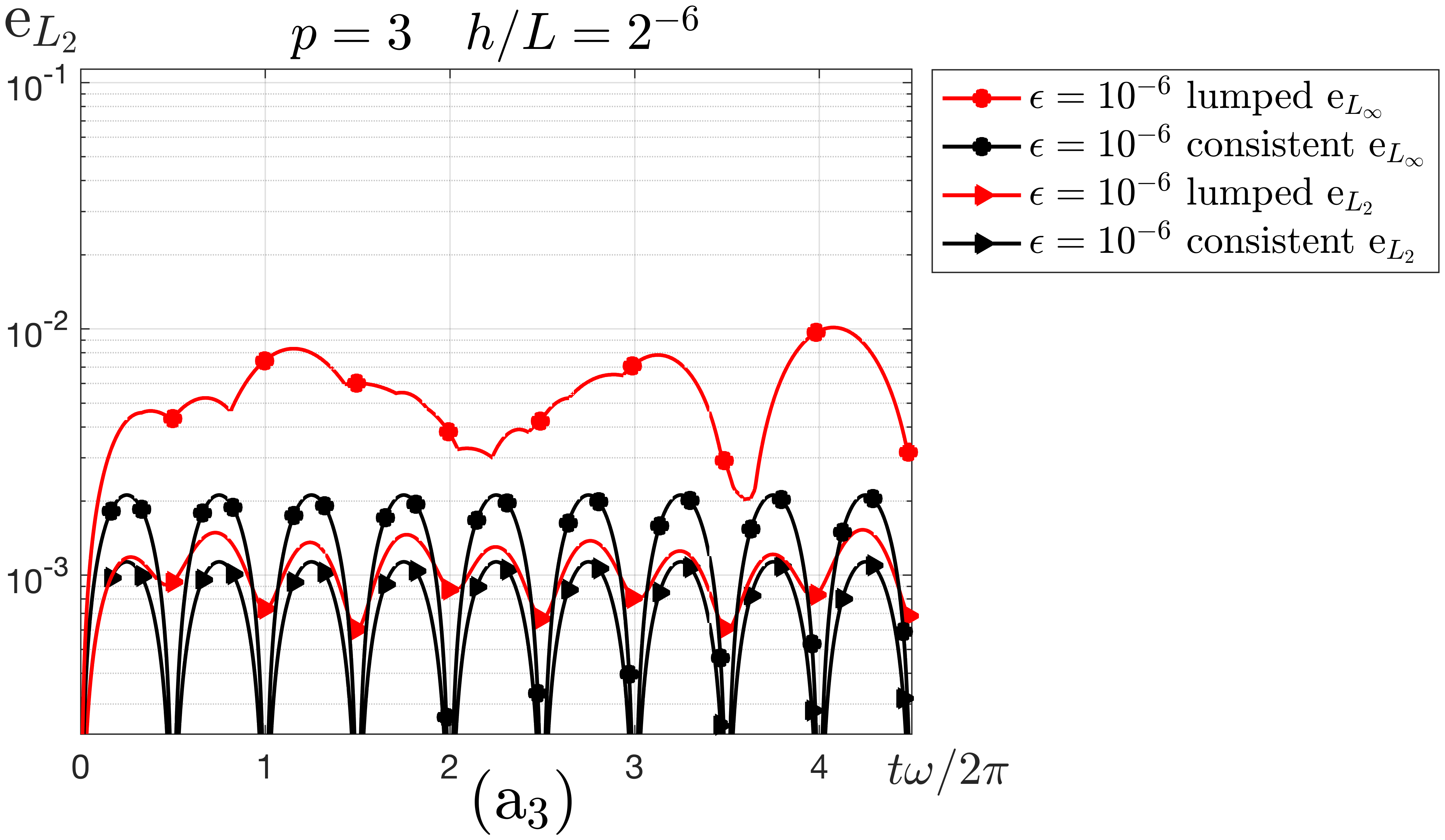}
    \end{subfigure}\\
    \begin{subfigure}[b]{0.27\textwidth}\includegraphics[height=4.0cm]{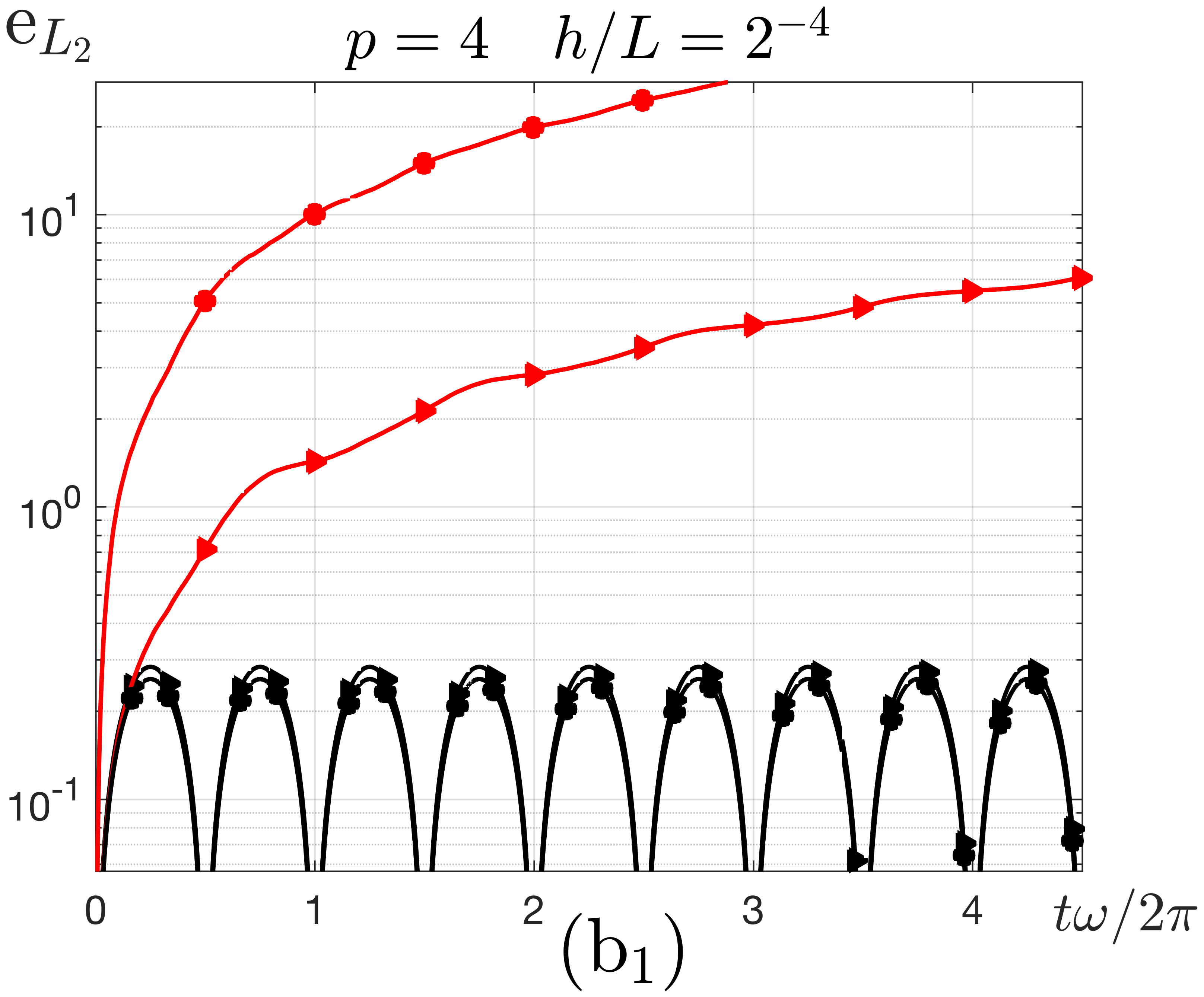}
    \end{subfigure}
    \begin{subfigure}[b]{0.27\textwidth}\includegraphics[height=4.0cm]{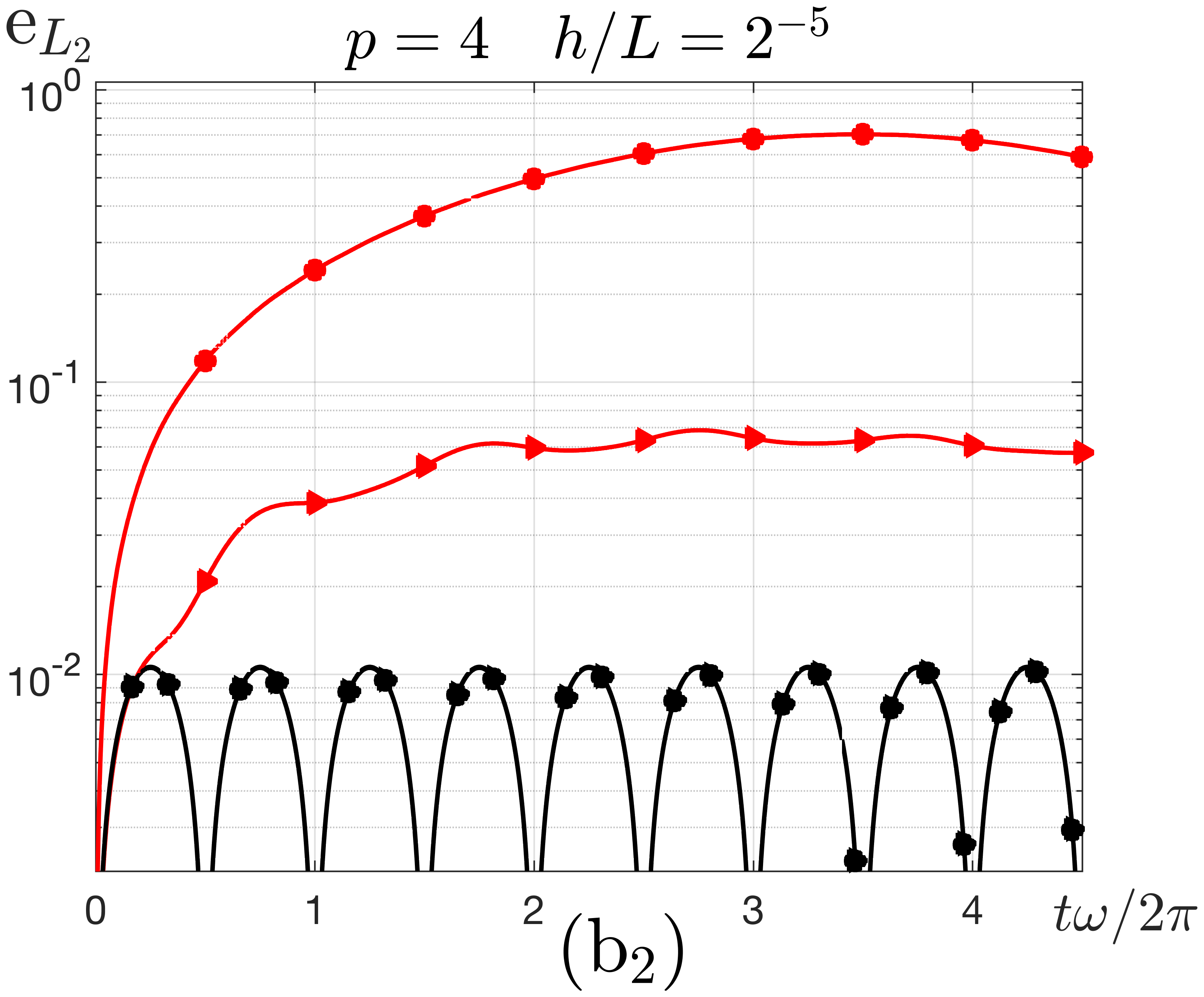}
    \end{subfigure}
    \begin{subfigure}[b]{0.44\textwidth}\includegraphics[height=4.0cm]{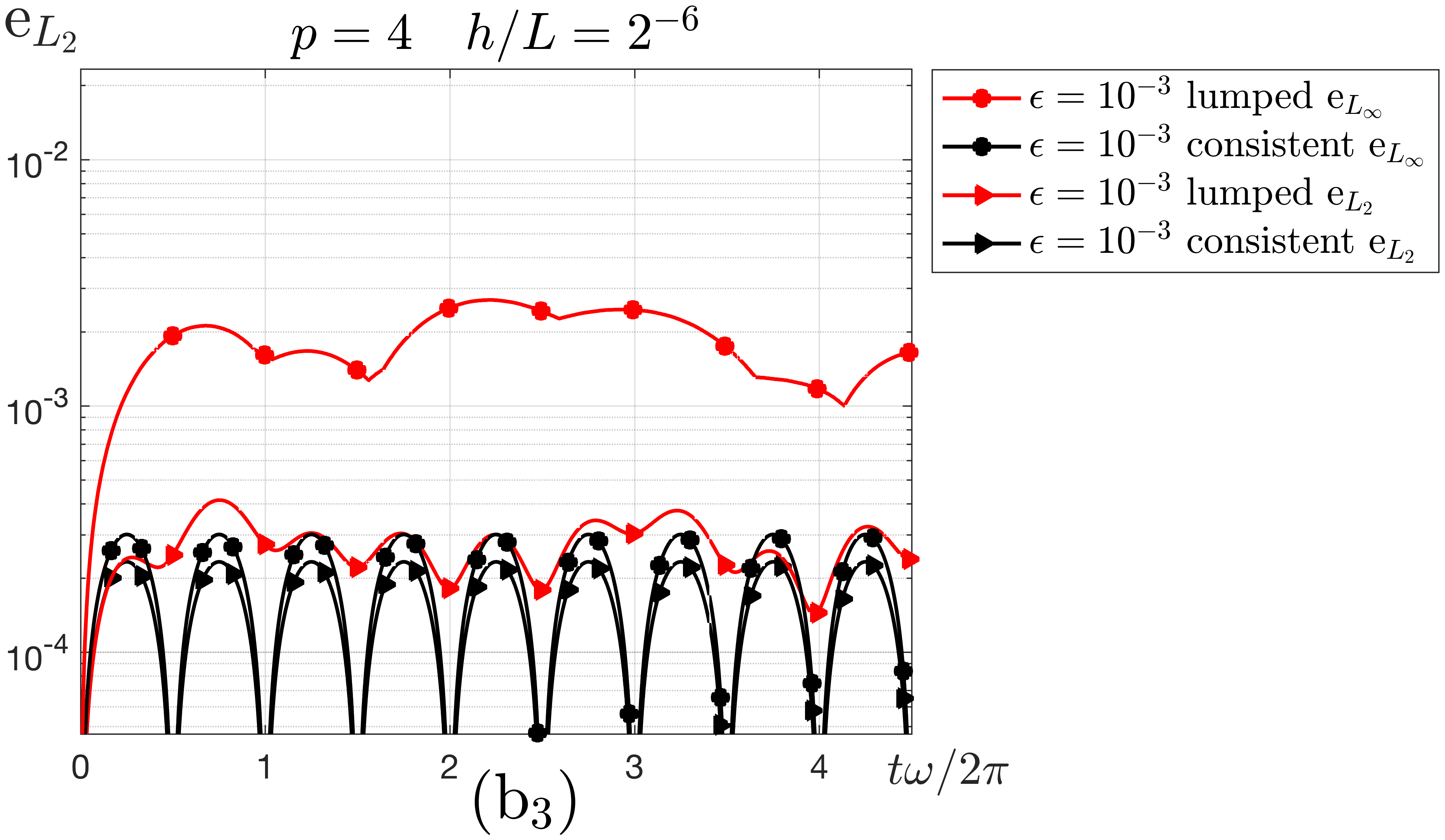}
    \end{subfigure}
    \caption{\Cref{ex: plate_externally_trimmed}: Comparison of the $L^2$ and $L^\infty$ errors for the consistent and lumped mass solutions for different refinement levels, spline orders and trimming parameters.}
    \label{fig: trimmed_square_plate_err_cons_lumped_refinement}
\end{figure}

\begin{figure}[H]	
    \centering
    \begin{subfigure}[b]{0.27\textwidth}\includegraphics[height=4.0cm]{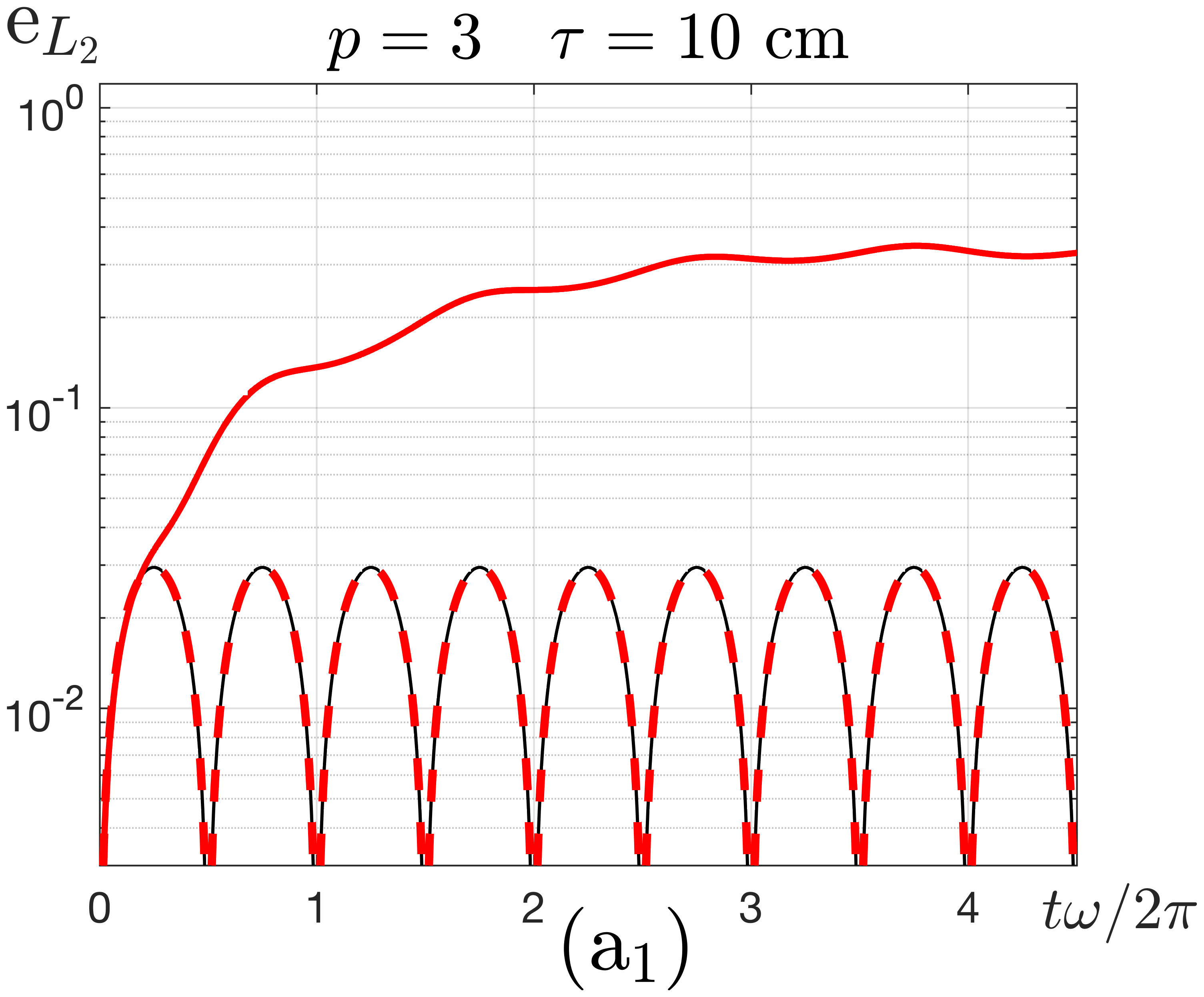}
    \end{subfigure}
    \begin{subfigure}[b]{0.27\textwidth}\includegraphics[height=4.0cm]{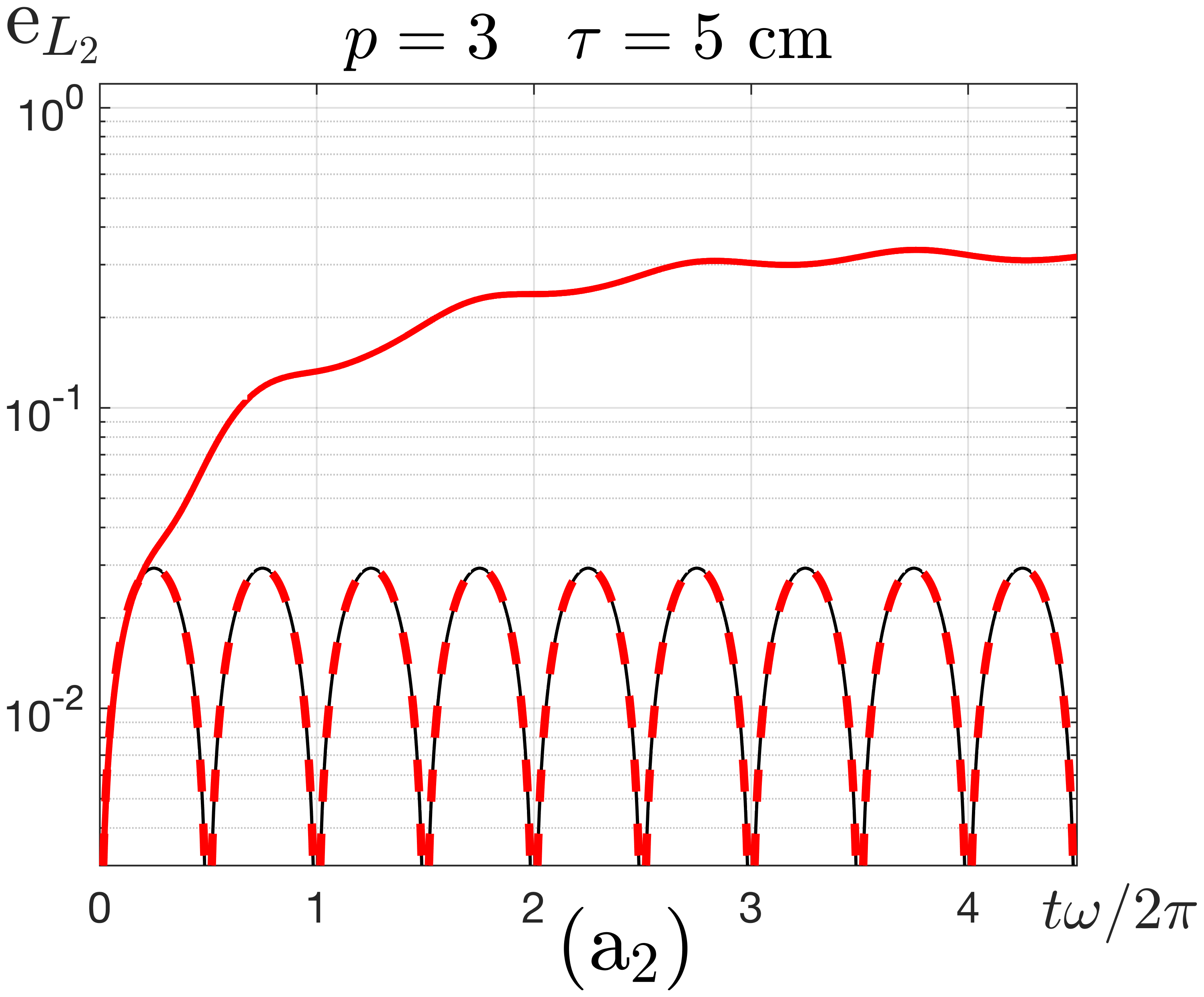}
    \end{subfigure}
    \begin{subfigure}[b]{0.44\textwidth}\includegraphics[height=4.0cm]{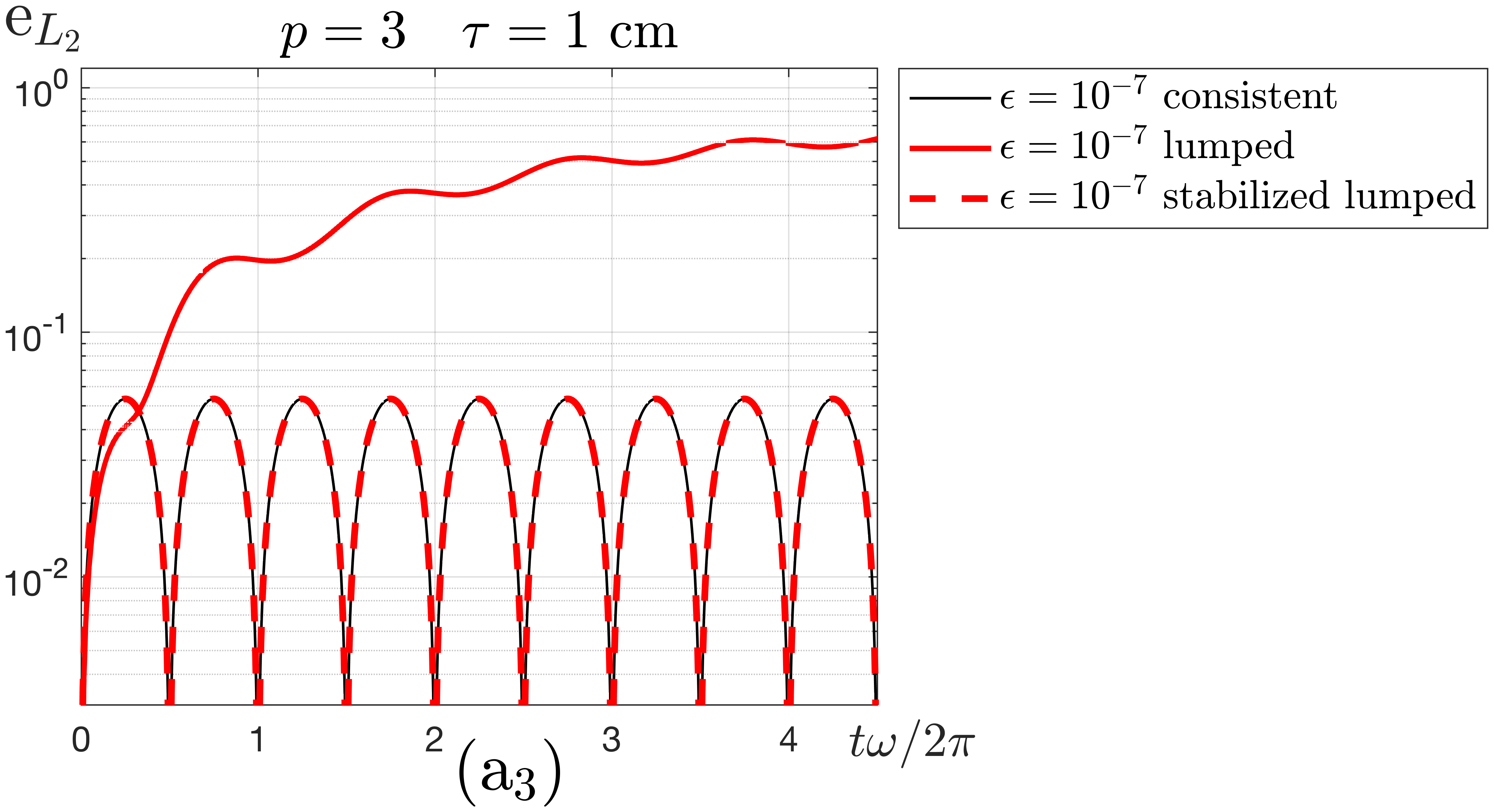}
    \end{subfigure}\\
    \begin{subfigure}[b]{0.27\textwidth}\includegraphics[height=4.0cm]{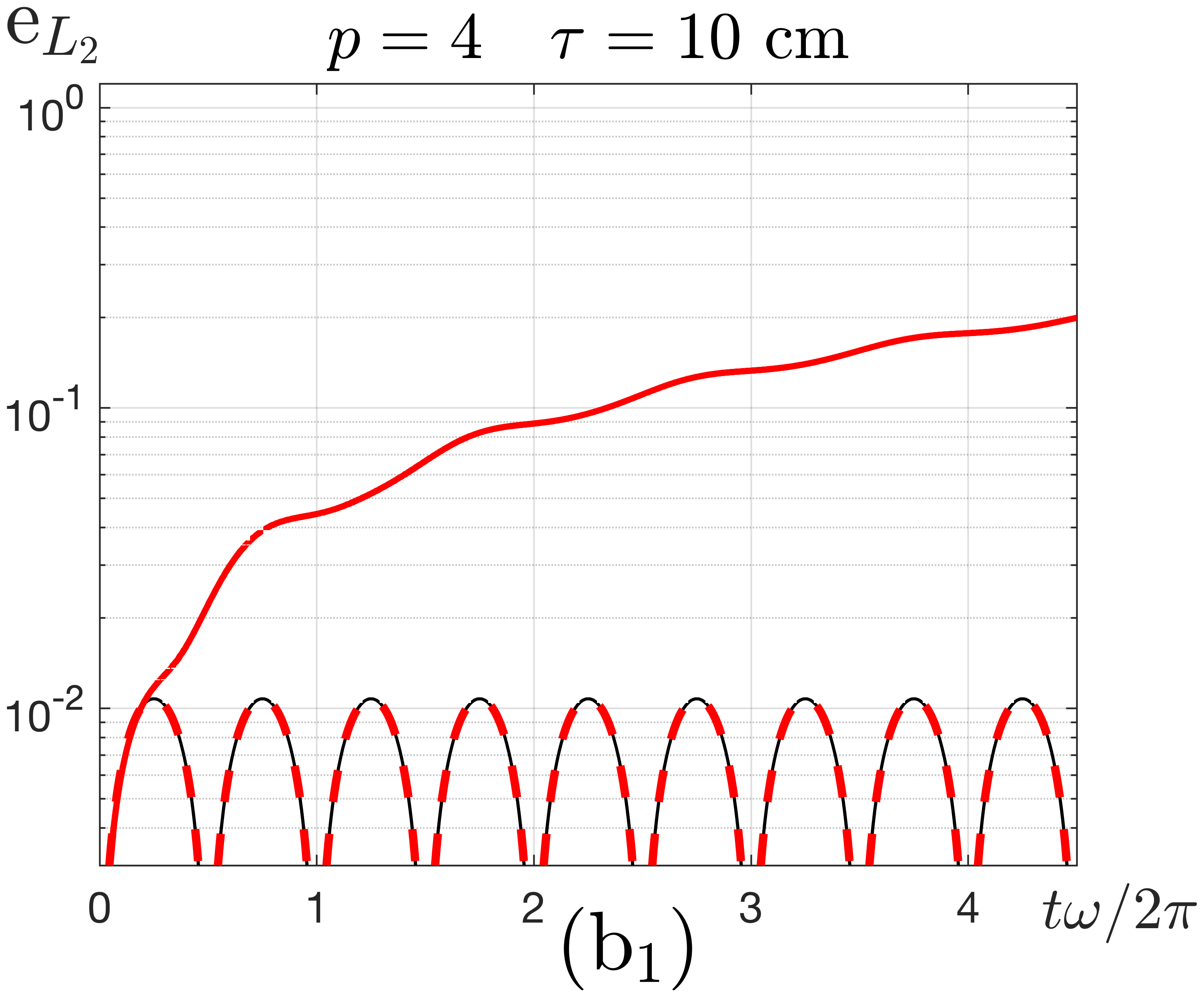}
    \end{subfigure}
    \begin{subfigure}[b]{0.27\textwidth}\includegraphics[height=4.0cm]{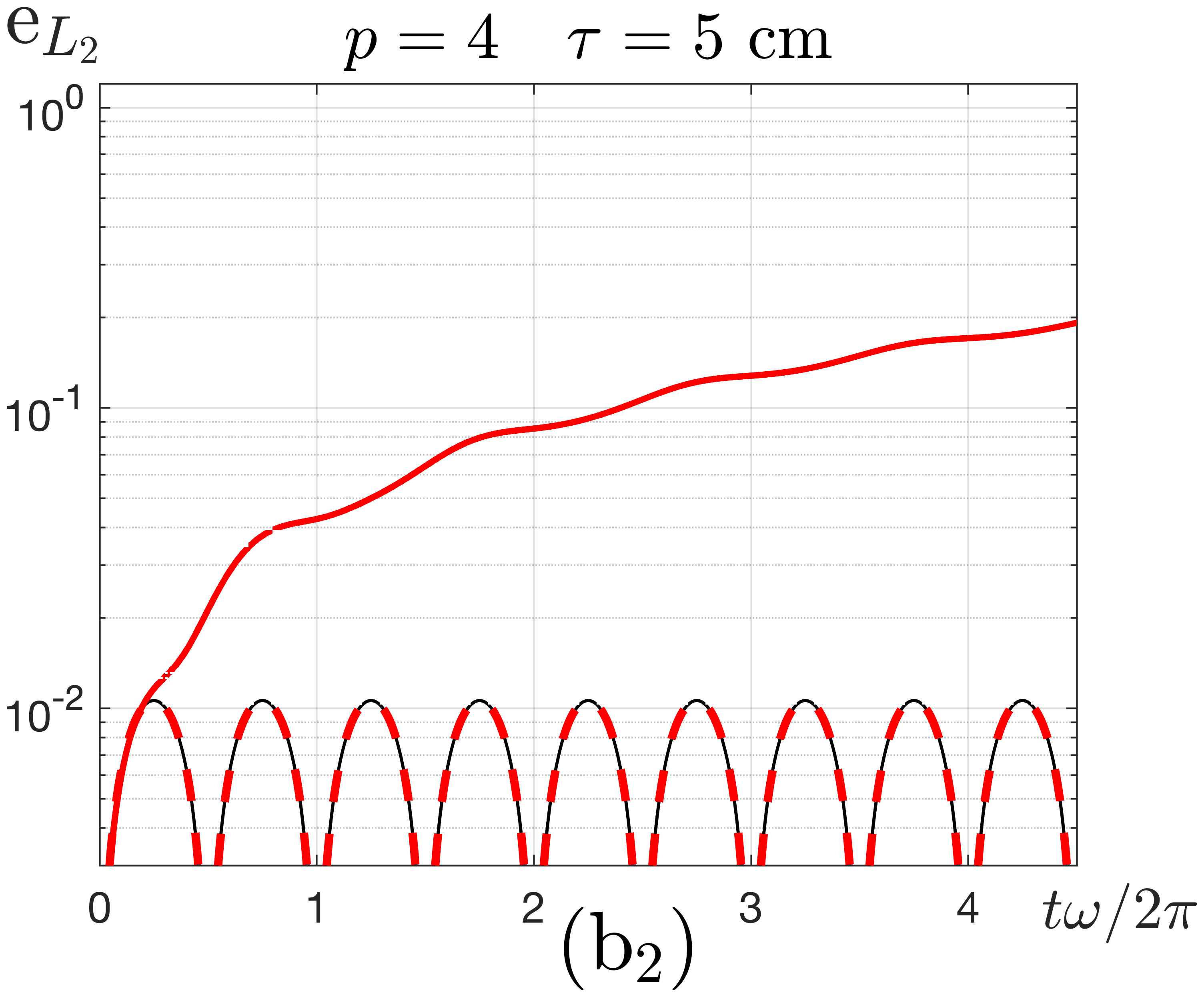}
    \end{subfigure}
    \begin{subfigure}[b]{0.44\textwidth}\includegraphics[height=4.0cm]{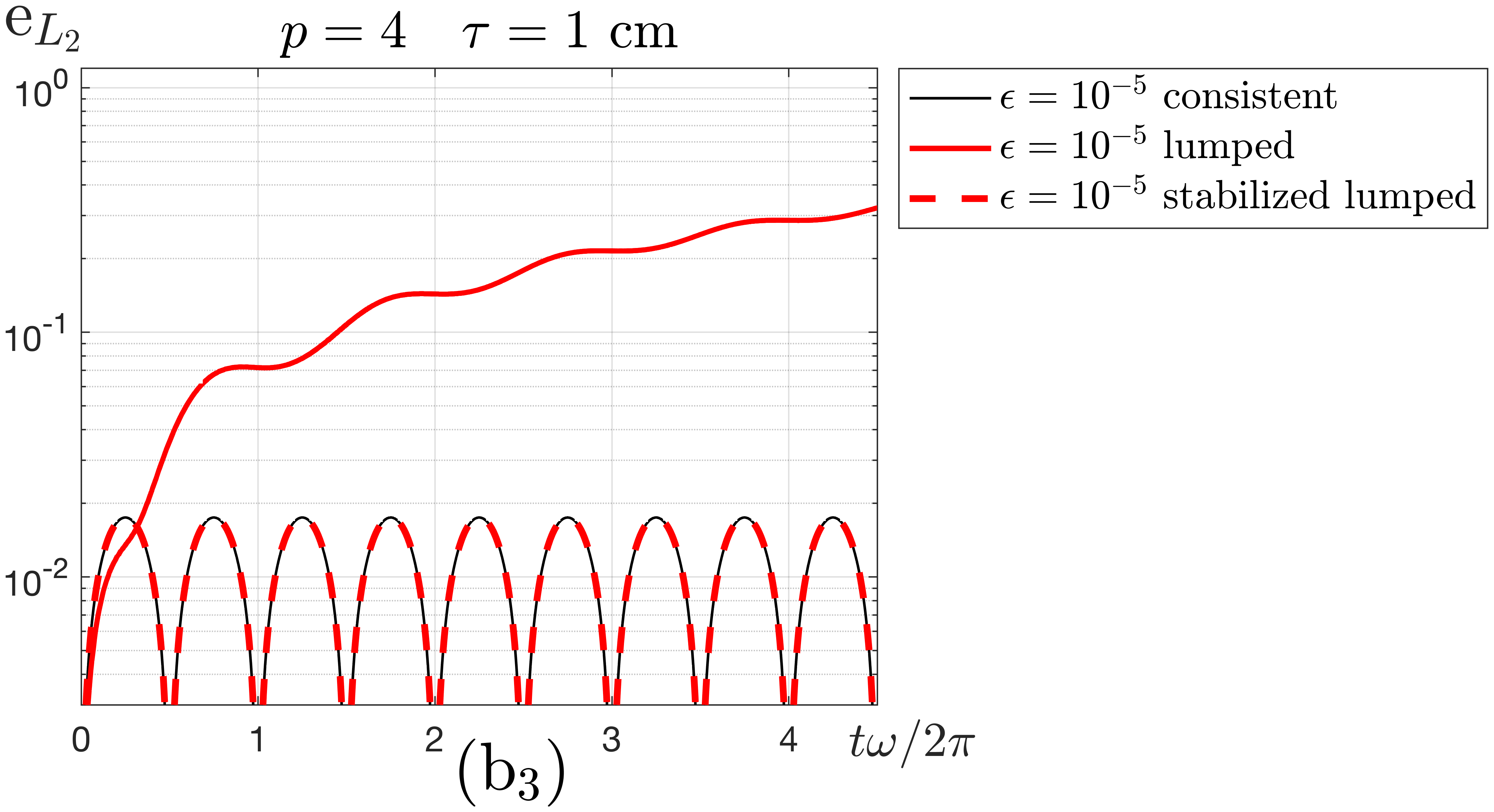}
    \end{subfigure}
    \caption{\Cref{ex: plate_externally_trimmed}: $L^2$ error for the consistent and (stabilized) lumped mass solutions for different spline orders, thicknesses and trimming parameters.}
    \label{fig: trimmed_square_plate_err_cons_stab_lumped}
\end{figure}

Simulations in explicit dynamics are a fine balance of accuracy and efficiency. On the one hand, the consistent mass oftentimes yields the best accuracy one can hope for, but is prohibitively expensive. On the other hand, the lumped mass drastically improves the efficiency if one is willing to give up on some accuracy. Admirably, the stabilized lumped mass retains the efficiency of the (non-stabilized) lumped mass while improving its accuracy. Indeed, as shown in \Cref{fig: trimmed_square_plate_cfl}, the critical time step of neither the lumped mass nor its stabilized counterpart depend on the trimming parameter, contrary to the (non-stabilized) consistent mass. For the sake of completeness, we have also included results for the stabilized consistent mass. While stabilizing the discrete formulation eliminates the deleterious effect trimming has on the spectrum, all other dependencies on the discretization parameters remain unchanged, particularly for the largest eigenvalue indirectly shown in \Cref{fig: trimmed_square_plate_cfl}. Indeed, as for boundary-fitted discretizations, the critical time step still decreases as the spline order increases or as the thickness and the mesh size decrease.

\begin{figure}[H]	
    \centering
    \begin{subfigure}[b]{0.27\textwidth}\includegraphics[height=4.0cm]{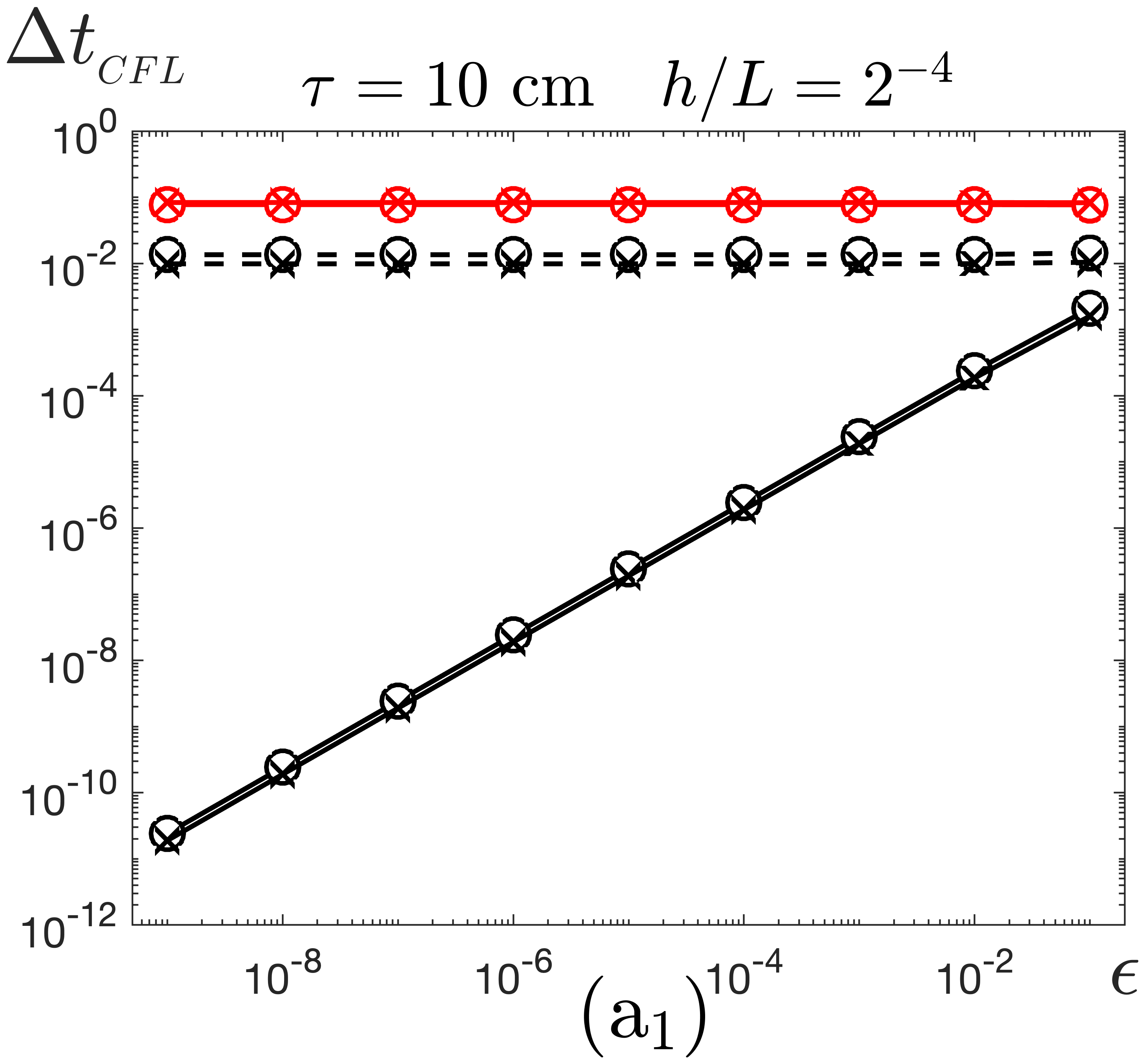}\end{subfigure} 
    \begin{subfigure}[b]{0.27\textwidth}\includegraphics[height=4.0cm]{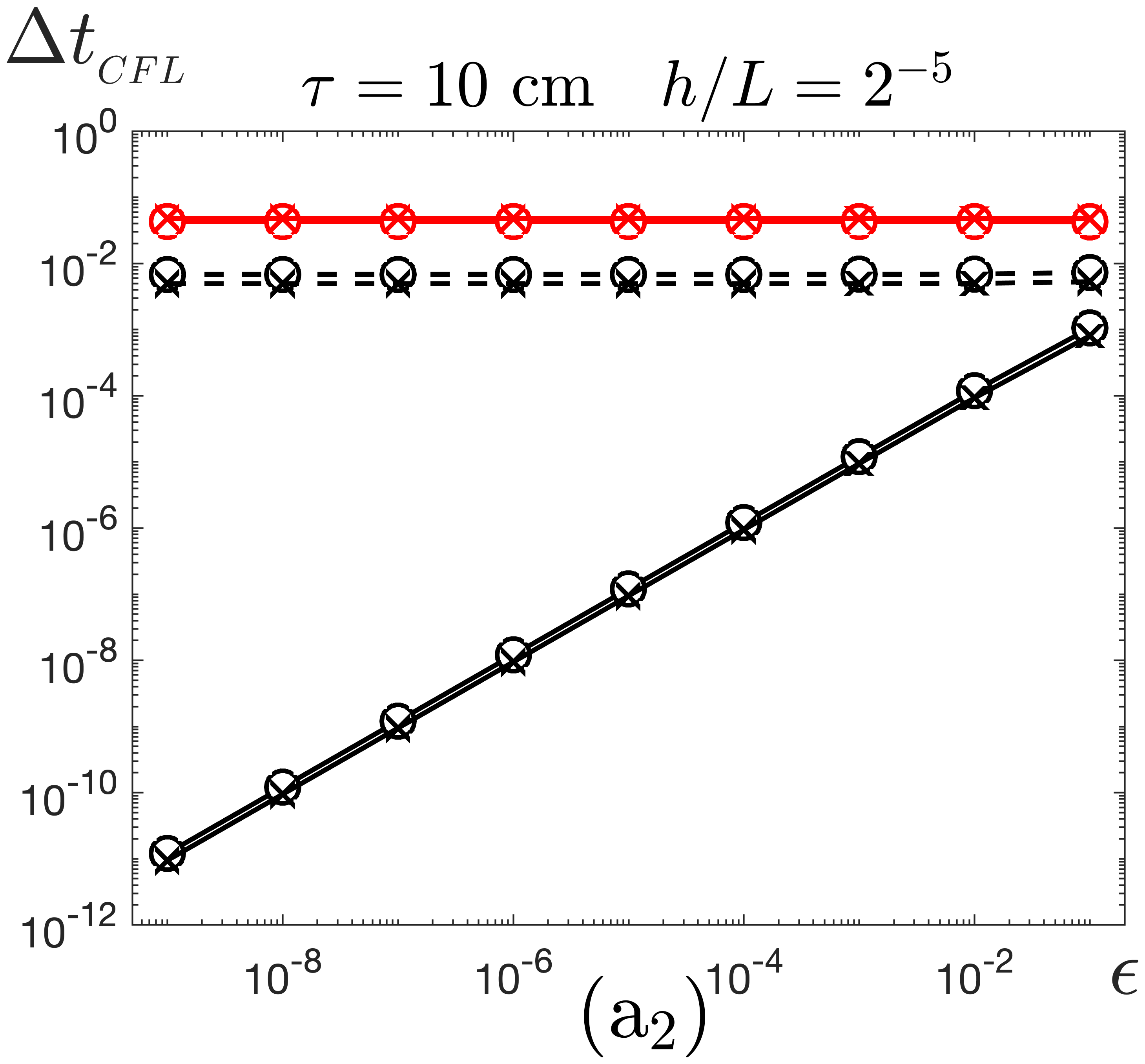}\end{subfigure} 
    \begin{subfigure}[b]{0.44\textwidth}\includegraphics[height=4.0cm]{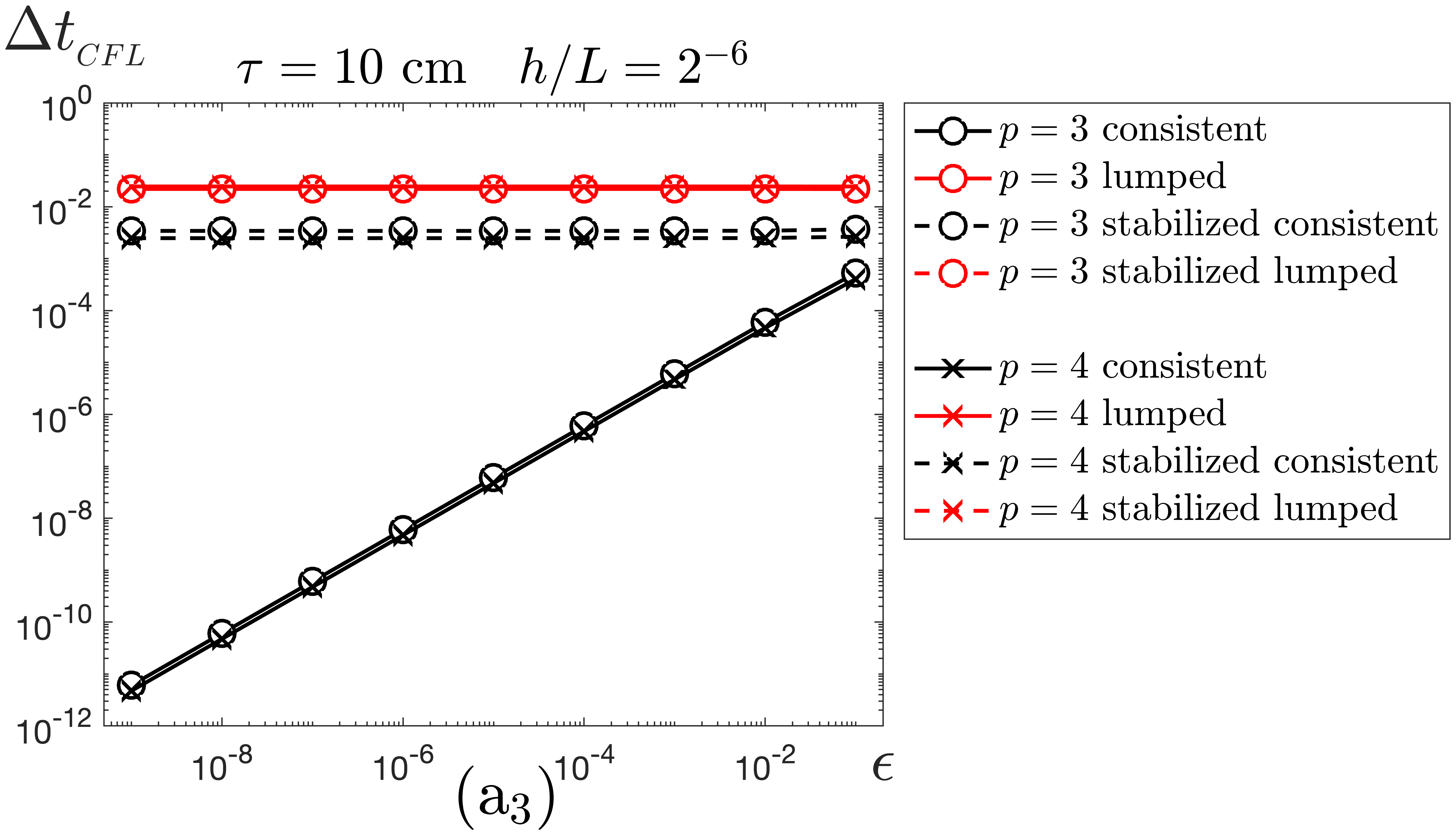}\end{subfigure}\\
    \begin{subfigure}[b]{0.27\textwidth}\includegraphics[height=4.0cm]{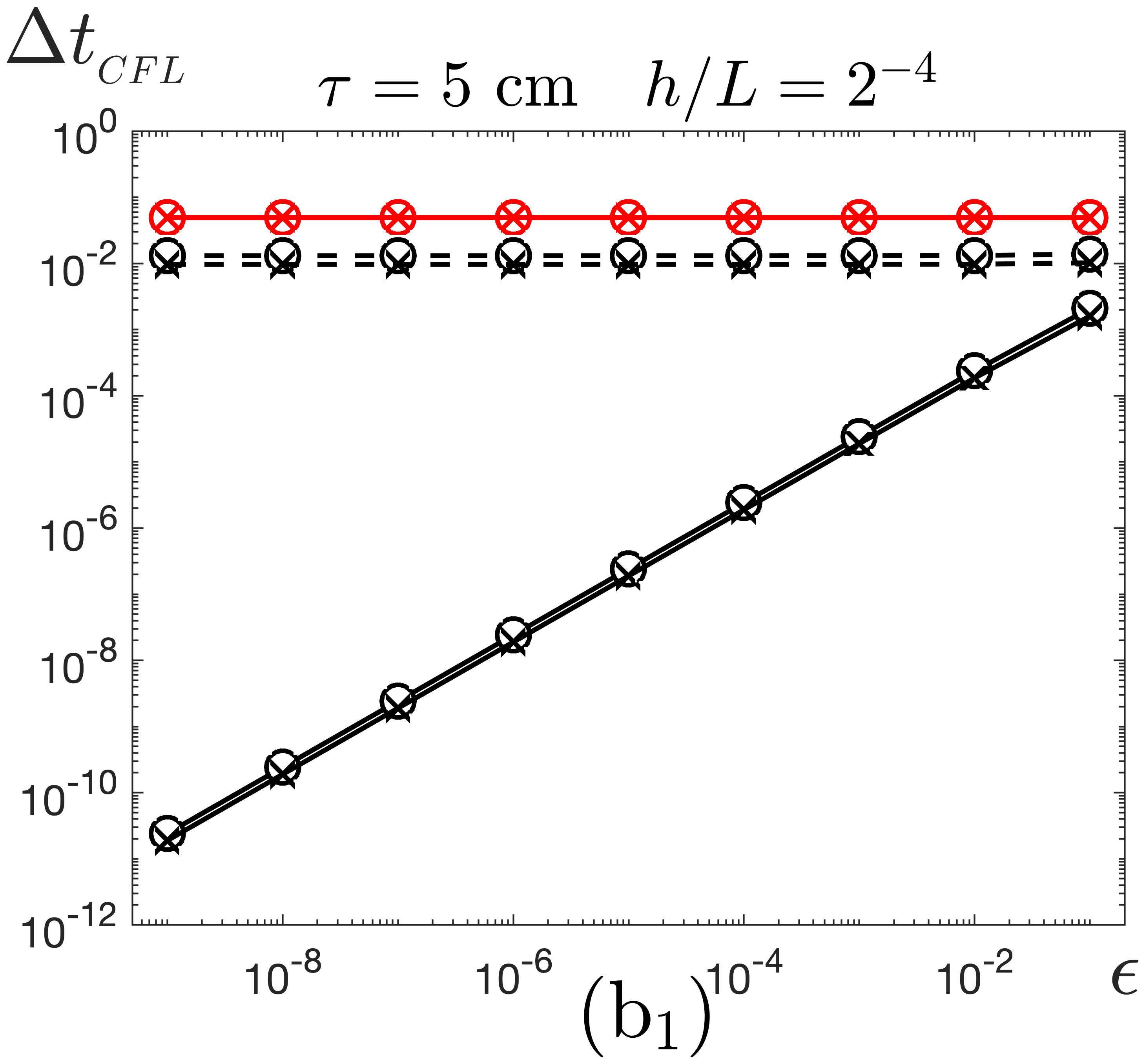}\end{subfigure} 
    \begin{subfigure}[b]{0.27\textwidth}\includegraphics[height=4.0cm]{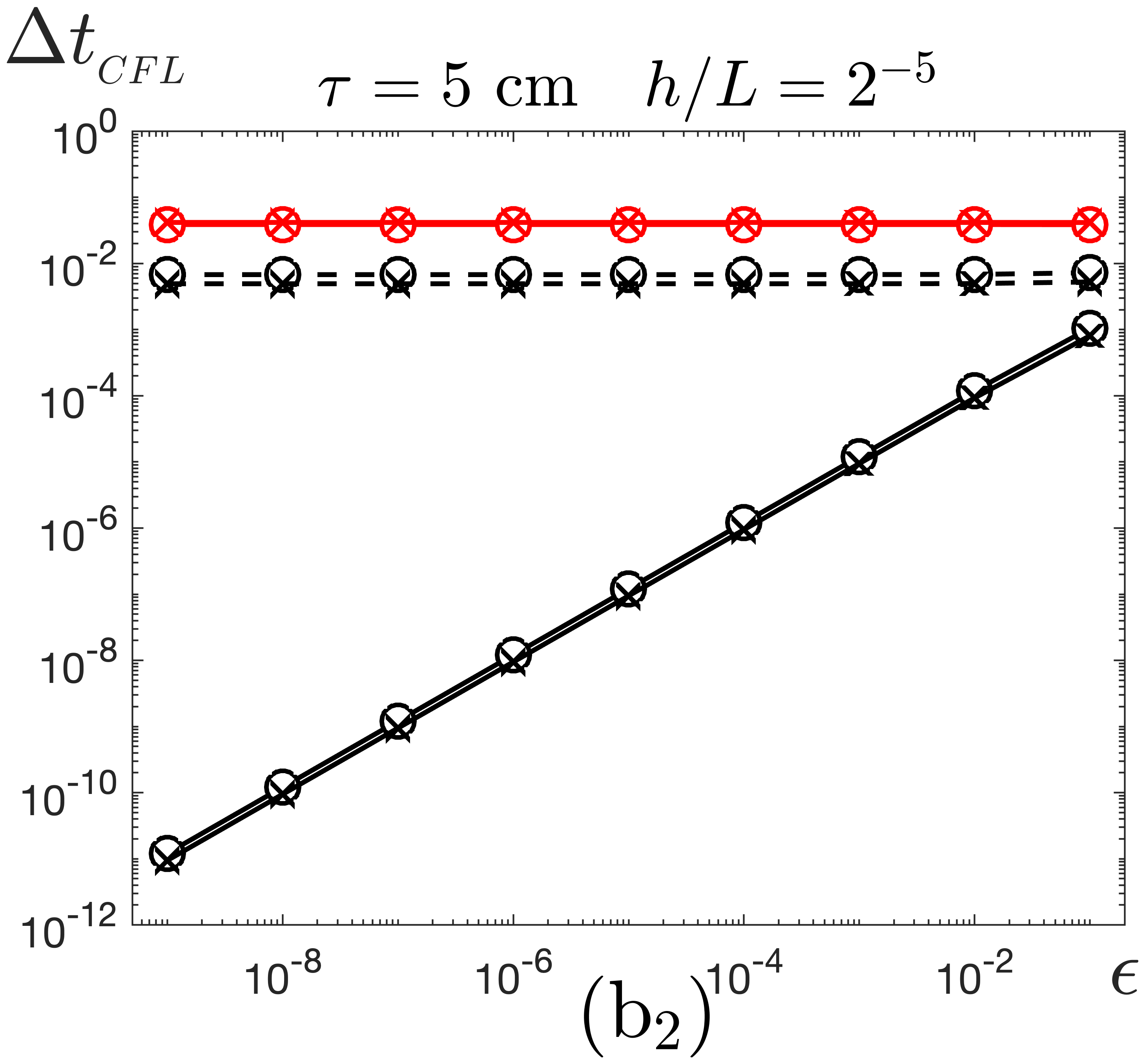}\end{subfigure} 
    \begin{subfigure}[b]{0.44\textwidth}\includegraphics[height=4.0cm]{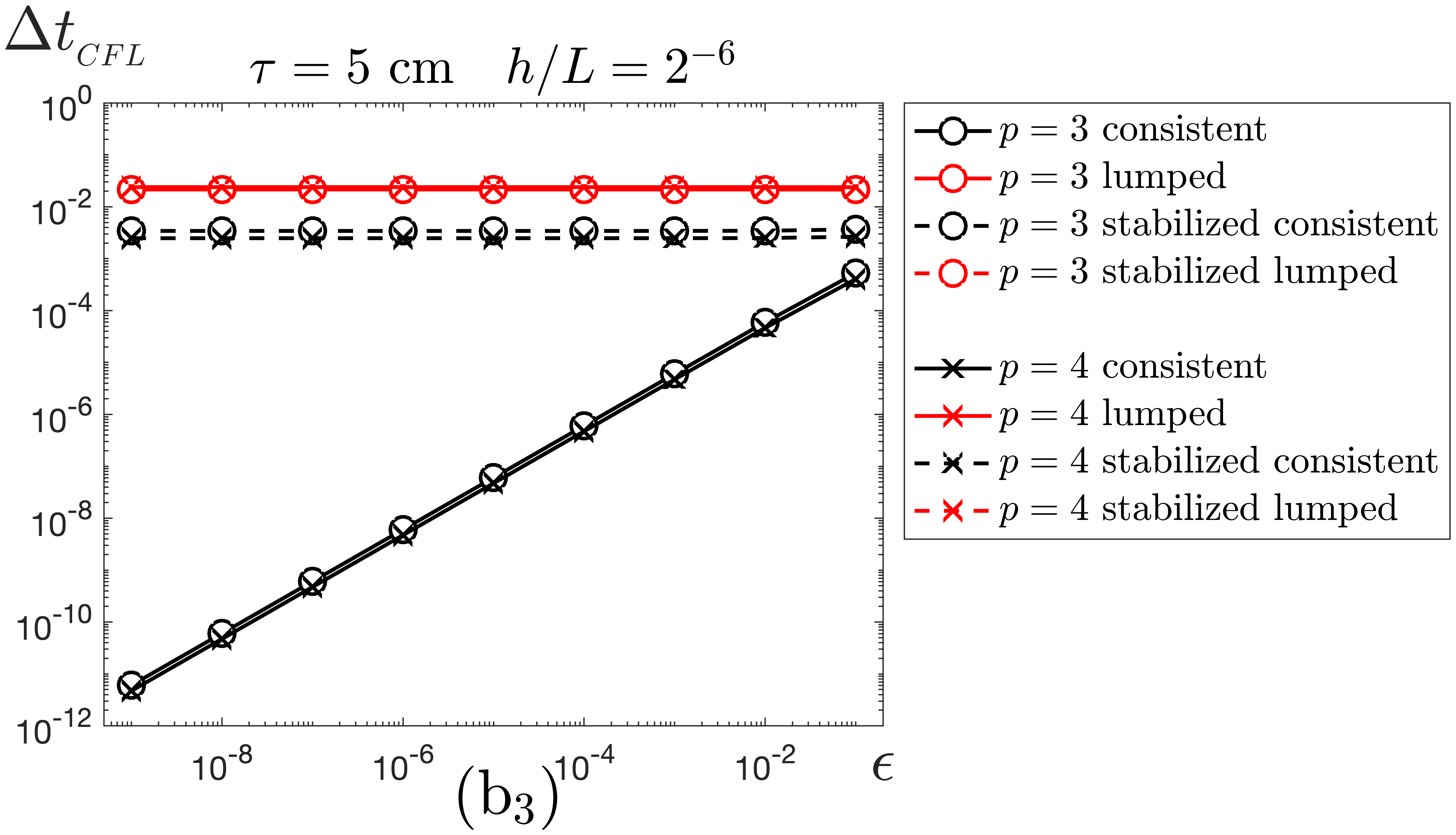}\end{subfigure}\\ 
    \begin{subfigure}[b]{0.27\textwidth}\includegraphics[height=4.0cm]{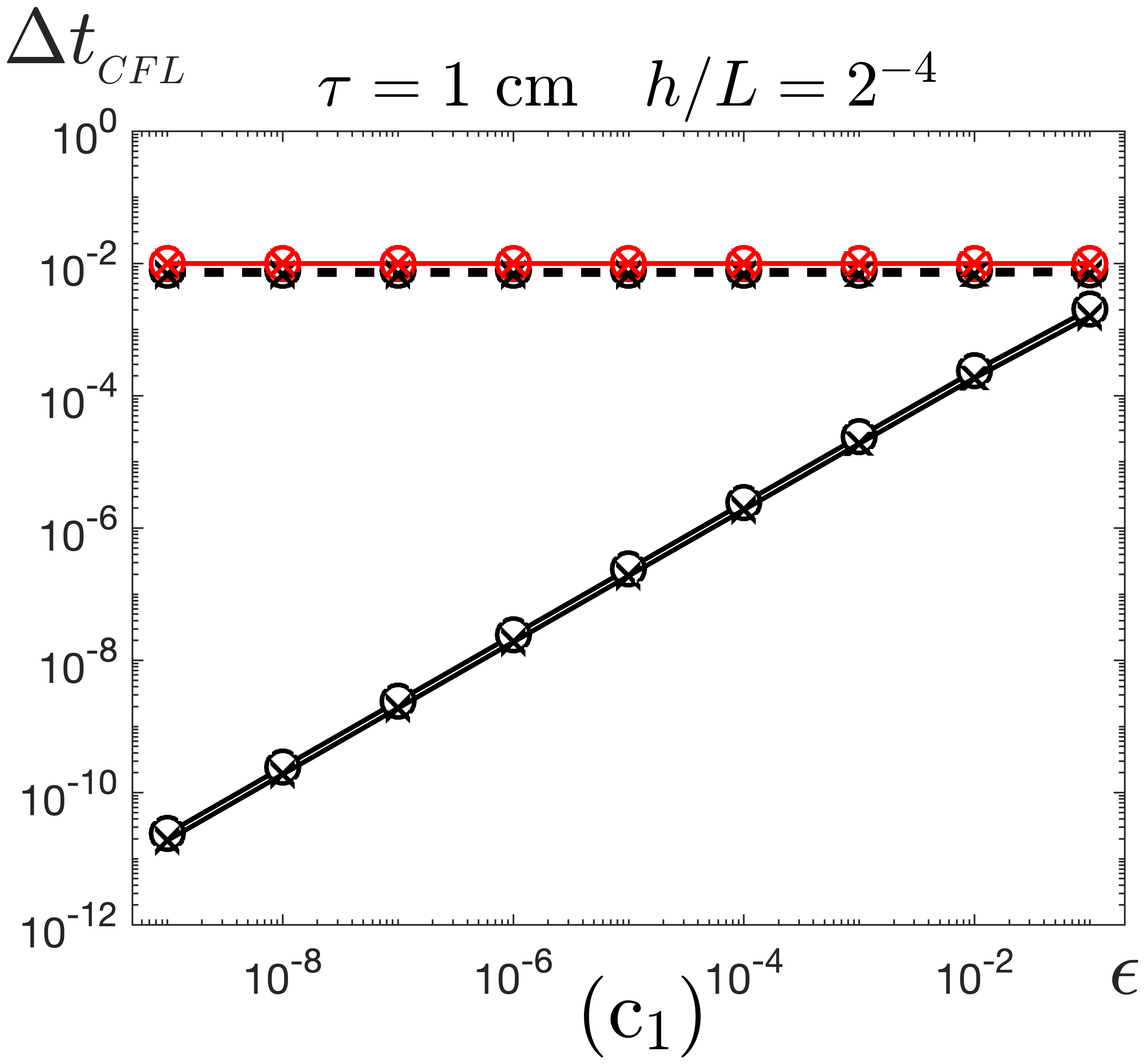}\end{subfigure} 
    \begin{subfigure}[b]{0.27\textwidth}\includegraphics[height=4.0cm]{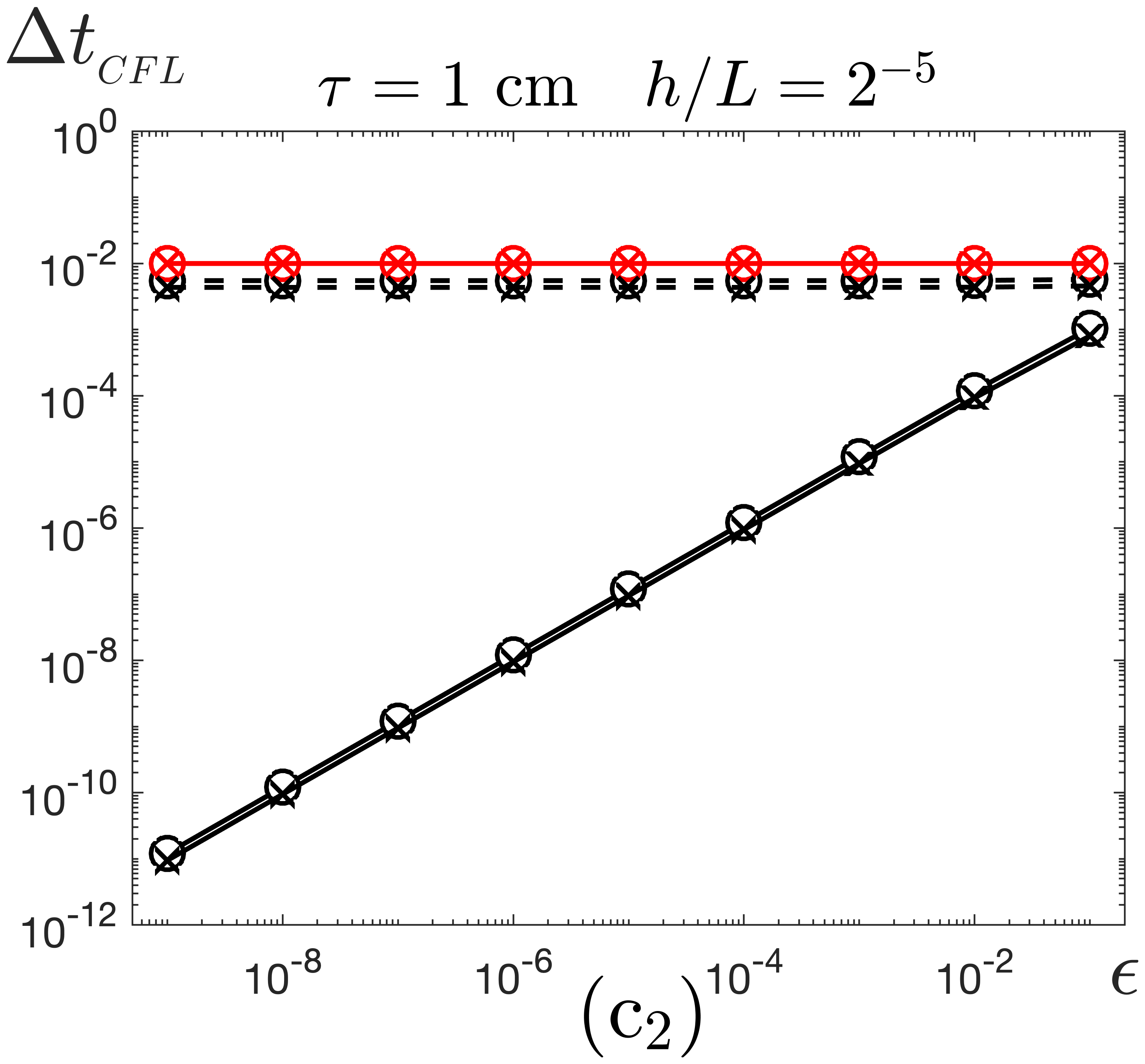}\end{subfigure} 
    \begin{subfigure}[b]{0.44\textwidth}\includegraphics[height=4.0cm]{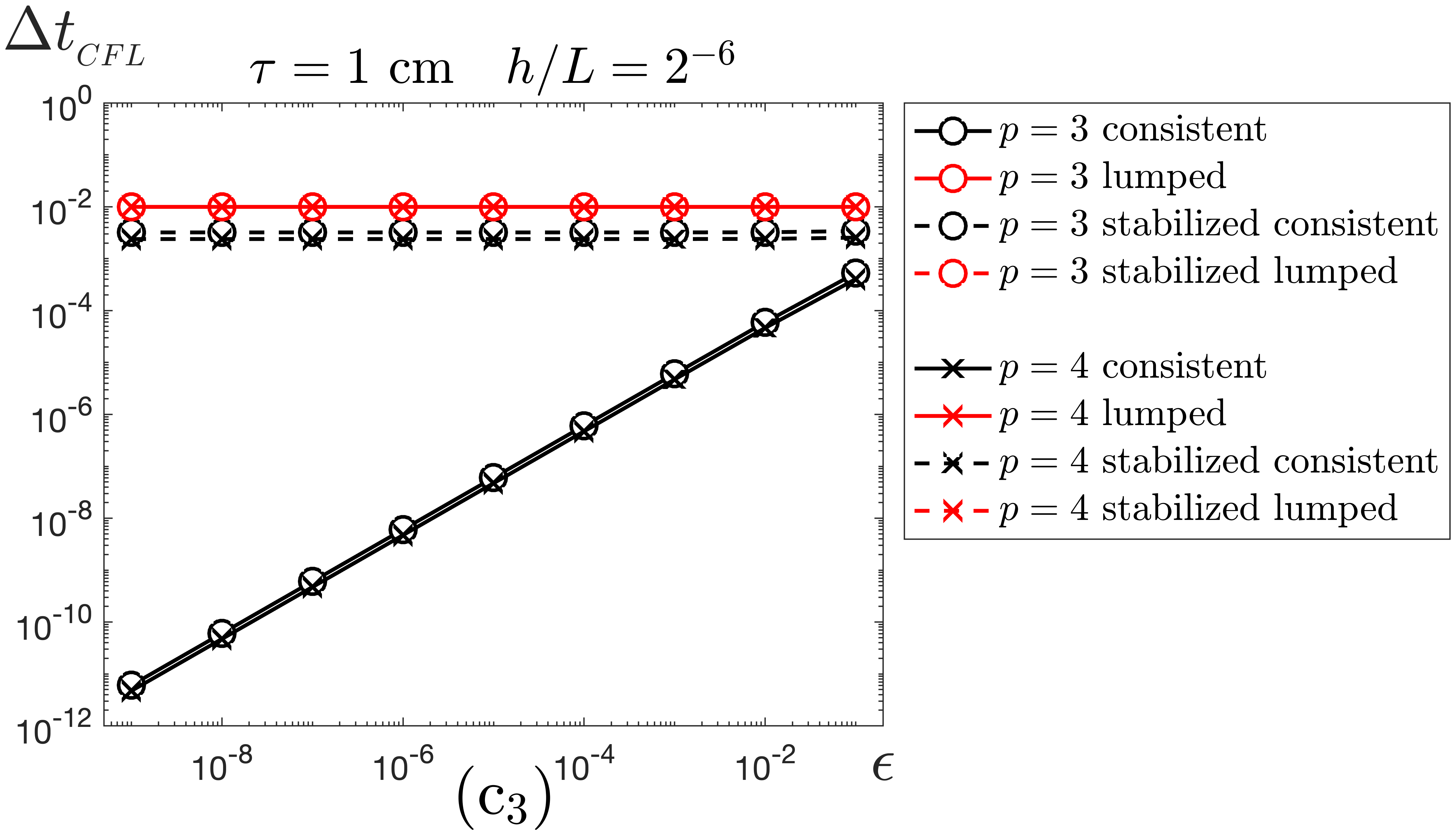}\end{subfigure}
    \caption{\Cref{ex: plate_externally_trimmed}: CFL condition for different refinement levels, spline orders and thicknesses.} 
    \label{fig: trimmed_square_plate_cfl}
\end{figure}
\end{example}

\begin{example}[Rotated plate]
\label{ex: rotated_square}
As we will show, relatively simple examples are not exempt from developing spurious oscillations. A square plate of thickness $\tau=5$ cm initially centered at the origin is trimmed along its bottom and right edges before being rotated $45 \degree$ counterclockwise (see \Cref{fig: rotated_plate}). Neumann (or type 2) boundary conditions are prescribed along its two trimmed edges while Dirichlet (or type 1) boundary conditions are enforced on the remaining two edges. Contrary to the previous example, we prescribe boundary data and initial conditions instead of computing them from a manufactured solution. The shell's ``external forces'' $\bm{\mathsf{f}} = (\bm{f}, \bm{m})$ are
\begin{align*}
    \bm{f}(\bm{x},t) &= f_0\; e^{b_0 x_1}\sin{(\pi n_0 x_1)} \phi(t) \;\bm{e}_3, \\
    \bm{m}(\bm{x},t) &= -m_0 \; e^{b_0 x_1} \frac{b_0\sin{(\pi n_0x_1)} - n_0\pi\cos{(\pi n_0 x_1)}}{b_0^2 + n_0^2\pi^2}\phi(t) \;\bm{e}_1,
\end{align*}
and the ``boundary tractions'' $\bm{\mathsf{h}} = (\bm{h}, \bm{n})$ are
\begin{align*}
    \bm{h}(\bm{x},t) &= -h_0 \; e^{b_0 x_1} \frac{b_0\sin{(\pi n_0x_1)} - n_0\pi\cos{(\pi n_0 x_1)}}{b_0^2 + n_0^2\pi^2}\phi(t) \;(\bm{e}_1 \cdot \bm{r}) \bm{e}_3, \\
    \bm{n}(\bm{x},t)&=\bm{0},
\end{align*}
where $\bm{r}$ denotes the outward unit normal vector and $\phi(t)=\sin(\omega t)$. The parameter values are $f_0=0.1$ $\mathrm{N/m^2}$, $m_0=0.1$ $\mathrm{N/m}$, $h_0=0.1$ $\mathrm{N/m^2}$, $b_0=6$, $n_0=28$ and $\omega=\frac{1}{10L}\sqrt{\frac{E}{\rho}}$ with unit elastic modulus $E$ and density $\rho$ and Poisson ratio $\nu=0.25$. Finally, the initial conditions are
\begin{align*}
    &\bm{u}(\bm{x},0) = \bm{0}, \\
    &\dot{\bm{u}}(\bm{x},0) = v_0 \; e^{b_0 x_1}\sin{(\pi n_0 x_1)} \;\bm{e}_3,\\
    &\bm{\theta}(\bm{x},0) = \bm{0}, \\
    &\dot{\bm{\theta}}(\bm{x},0) = \bm{0}
\end{align*}
where $v_0=7.74 \times 10^{-5}$ $\mathrm{m/s}$. The background mesh discretizing the geometry aligns with the plate's boundaries, whose trimmed edges are only $\delta$ away from the nearest grid lines. The ratio $\epsilon= \delta/h$, where $h$ denotes the mesh size, measures the severity of trimming. In this experiment, we have set $\epsilon=10^{-9}$ and $L=1$ m. \figref{fig: rotated_plate_sol}{e$_1$-e$_2$} shows a reference solution computed over a fine mesh at two distinct times $t_2$ and $t_3$, as indicated in \figref{fig: rotated_plate_sol}{a}. The numerical solutions are computed with exactly the same strategy as in the previous example. Namely, the Central Difference method for the lumped mass solutions and an unconditionally stable Newmark method for the consistent mass with the same step size. As shown in \figref{fig: rotated_plate_sol}{c$_1$-c$_2$} and \figref{fig: rotated_plate_sol}{s$_1$-s$_2$}, the solutions for the consistent and stabilized lumped mass closely resemble the reference solution. On the contrary, the solution for the (non-stabilized) lumped mass once again features spurious oscillations that are particularly pronounced at the corner of the two trimmed edges. This example proves that these oscillations can occur even for relatively simple expressions of the prescribed data and, therefore, they could a priori arise in practical industrial applications. As shown in \figref{fig: rotated_plate_sol}{s$_1$-s$_2$}, stabilizing the discrete space prior to lumping the mass is a very convenient way of ruling out this possibility.

\begin{figure}[H]	
    \centering
    \includegraphics[width=0.5\textwidth]{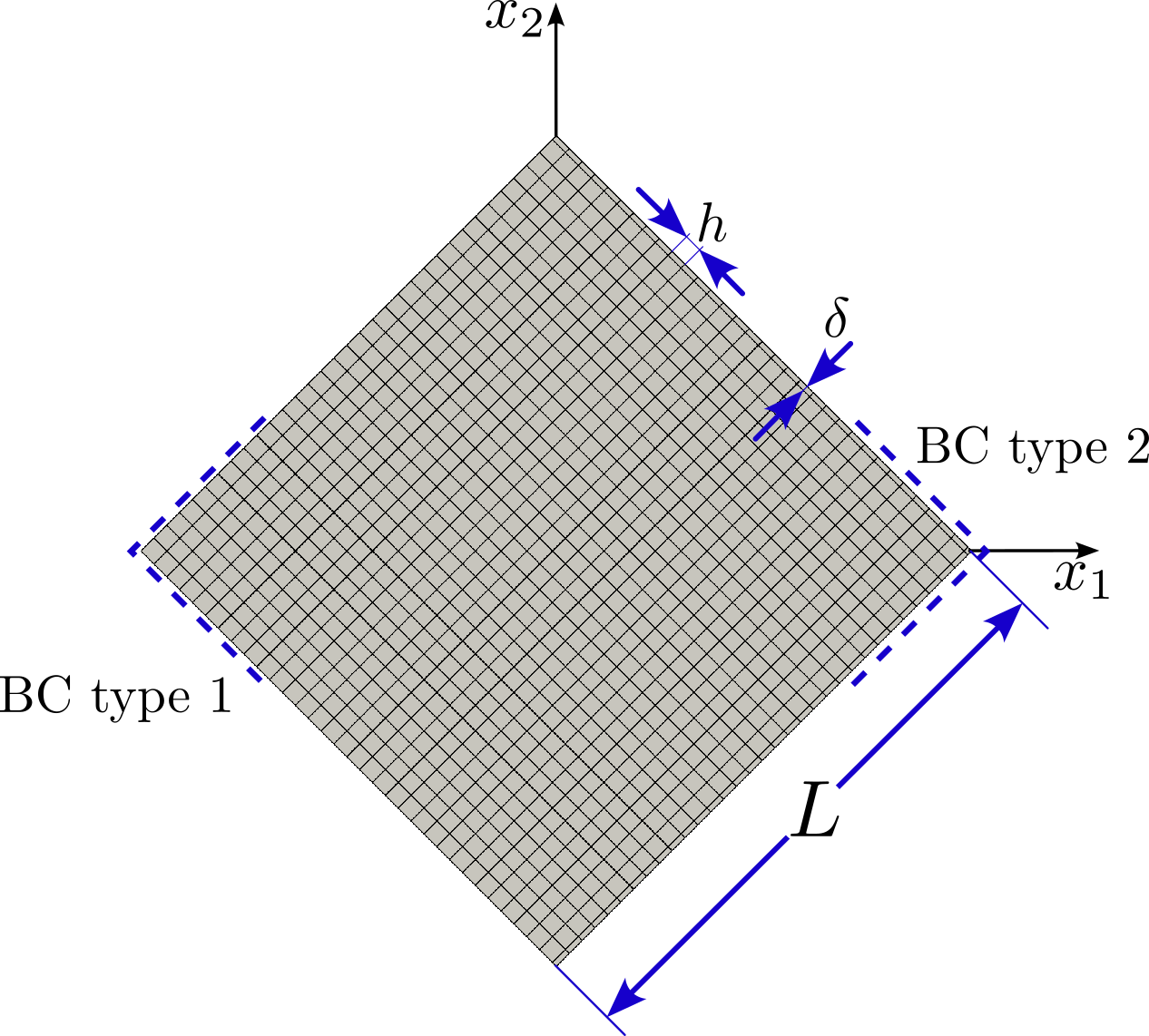}
    \caption{\Cref{ex: rotated_square}: Rotated plate.}
    \label{fig: rotated_plate}
\end{figure}

\begin{figure}[H]	
    \centering
    \includegraphics[width=0.7\textwidth]{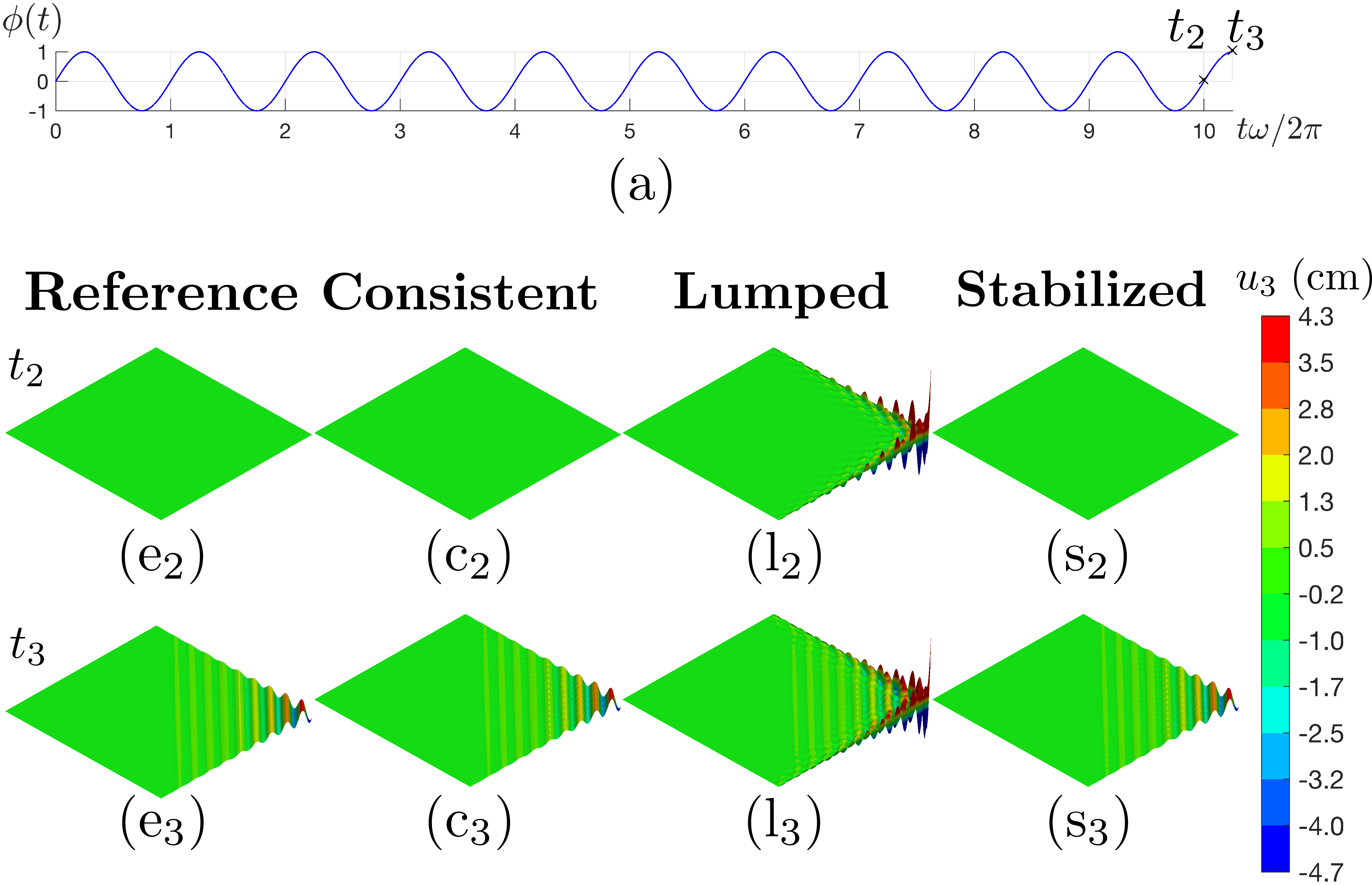}
    \caption{\Cref{ex: rotated_square}: Snapshots of the reference and numerical solutions.}
    \label{fig: rotated_plate_sol}
\end{figure}
\end{example}















\begin{example}[Plate with cut-out]
\label{ex: plate_with_cut-out}
This third example slightly modifies the first one by creating a square cut-out in the middle of the plate of side length $2a=0.4$ m (\Cref{fig: plate_with_cut_out}). Moreover, the inner boundary is trimmed instead of the outer one. The manufactured solution is modified accordingly as
\begin{equation}
\label{eq: exact_sol_cut_out}
    \bm{u}(\bm{\xi},t) = w(\bm{\xi})\phi(t) \bm{e}_3, \quad \text{and} \quad \bm{\theta}(\bm{\xi},t) = - \sum_{i=1}^2 w_{,i}(\bm{\xi})\phi(t)\bm{a}^i,
\end{equation}
where the spatial part $w \colon S \to \mathbb{R}$ and temporal part $\phi \colon [0,\; +\infty) \to \mathbb{R}$ are defined as
\begin{equation*}
    w(\bm{\xi}) = a_0 w_0 w_n \circ R(\bm{\xi}), \quad \text{and} \quad \phi(t) = \sin(\omega t),
\end{equation*}
with the functions
\begin{align*}
    w_0(y) &= e^{(1-y^2)}, \\
    w_n(y) &= \cos(2\pi n_0 y), \\
\end{align*}
and
\begin{equation*}
    R(\bm{\xi}) = \sqrt[n]{\frac{1}{2}\left[\left(\frac{\xi_1+\xi_2}{a}\right)^n +\left(\frac{\xi_1-\xi_2}{a}\right)^n \right]}
\end{equation*}
with parameter values $a_0 = 10$ cm, $n_0=4$, $n=6$, Poisson ratio $\nu=0.25$ and $\omega = \frac{1}{2L}\sqrt{\frac{E}{\rho}}$ with unit Young modulus $E$ and density $\rho$.

Dirichlet (or type 1) boundary conditions are prescribed along the outer boundary while Neumann (or type 2) boundary conditions are prescribed along the inner boundary. The exact solution \eqref{eq: exact_sol_cut_out} is depicted in \figref{fig: plate_with_cut_out_sol}{e$_1$-e$_3$} at three distinct times $t_1$-$t_3$. The plate is discretized with maximally smooth B-splines over a grid of mesh size $h$. The inner boundaries are $\delta$ away from the nearest grid lines and $\epsilon=\delta/h$ again denotes the relative trimming parameter. The solutions for the consistent mass and (stabilized) lumped mass, computed with the same methods as in the first example, are shown in \Cref{fig: plate_with_cut_out_sol} for times $t_1$-$t_3$. Shortly after the beginning of the simulation (at time t$_1$), all solutions look reasonably accurate. However, oscillations gradually develop in the solution for the lumped mass and amplify over time. Nevertheless, we do not expect these oscillations to indefinitely grow over time. Based on the results in \cite{voet2025stabilization}, the error should still behave periodically, albeit with an amplitude and period that depends on both $\epsilon$ and $p$. Indeed, the pattern shown in \Cref{fig: trimmed_plate_with_cut_out_err_cons_lumped} corroborates this finding. For relatively large values of $\epsilon$, the error behaves periodically and we expect it still holds over larger time scales as $\epsilon$ becomes smaller. Moreover, by comparing \figref{fig: trimmed_plate_with_cut_out_err_cons_lumped}{a$_1$-a$_3$} to \figref{fig: trimmed_plate_with_cut_out_err_cons_lumped}{b$_1$-b$_3$}, increasing the degree exacerbates this trend. As explained in \cite{voet2025stabilization}, this finding correlates with the behavior of the smallest discrete eigenvalues, which were analyzed in \cite{bioli2025theoretical}. As shown in \Cref{fig: trimmed_plate_with_cut_out_err_cons_stab_lumped}, stabilizing the discrete formulation (nearly) fixes the issue. However, contrary to the first example, the error for the stabilized lumped mass does not perfectly overlap with the consistent mass for thick plates. We believe this is due to the inherent inaccuracy of the row-sum technique (independently of trimming) and advanced mass lumping strategies \cite{voet2025mass} might eliminate or mitigate this discrepancy.


\begin{figure}[H]	
    \centering
    \includegraphics[width=0.5\textwidth]{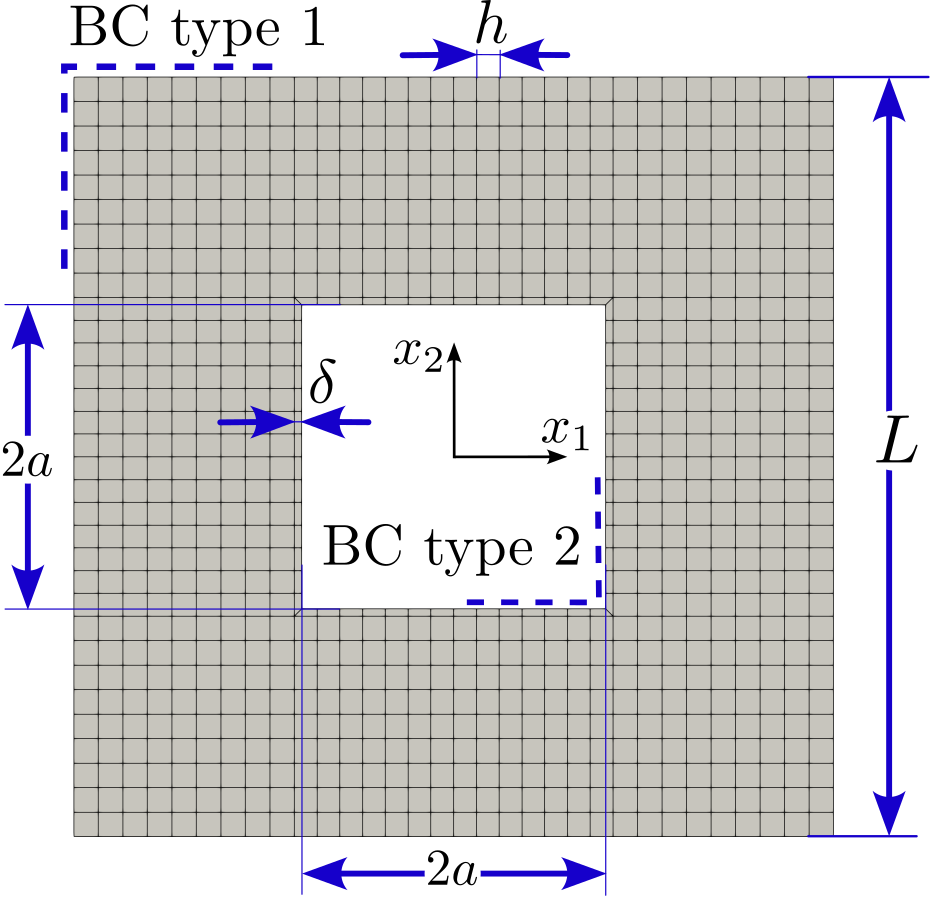}
    \caption{\Cref{ex: plate_with_cut-out}: Square plate with cut-out.}
    \label{fig: plate_with_cut_out}
\end{figure}









\begin{figure}[H]	
    \centering
    \includegraphics[width=0.7\textwidth]{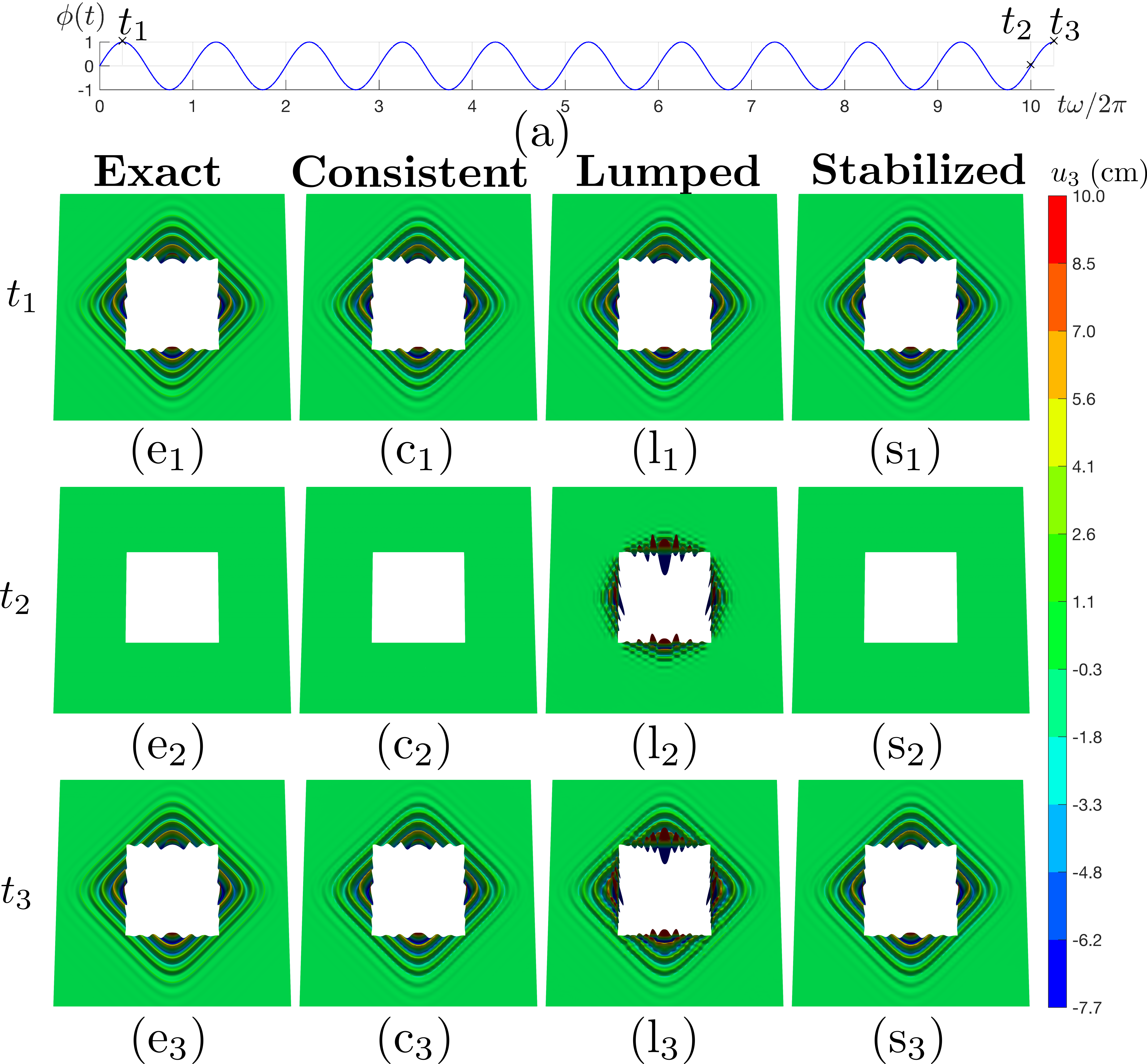}
    \caption{\Cref{ex: plate_with_cut-out}: Snapshots of the exact and numerical solutions.}
    \label{fig: plate_with_cut_out_sol}
\end{figure}

\begin{figure}[H]	
    \centering
    \begin{subfigure}[b]{0.27\textwidth}\includegraphics[height=4.0cm]{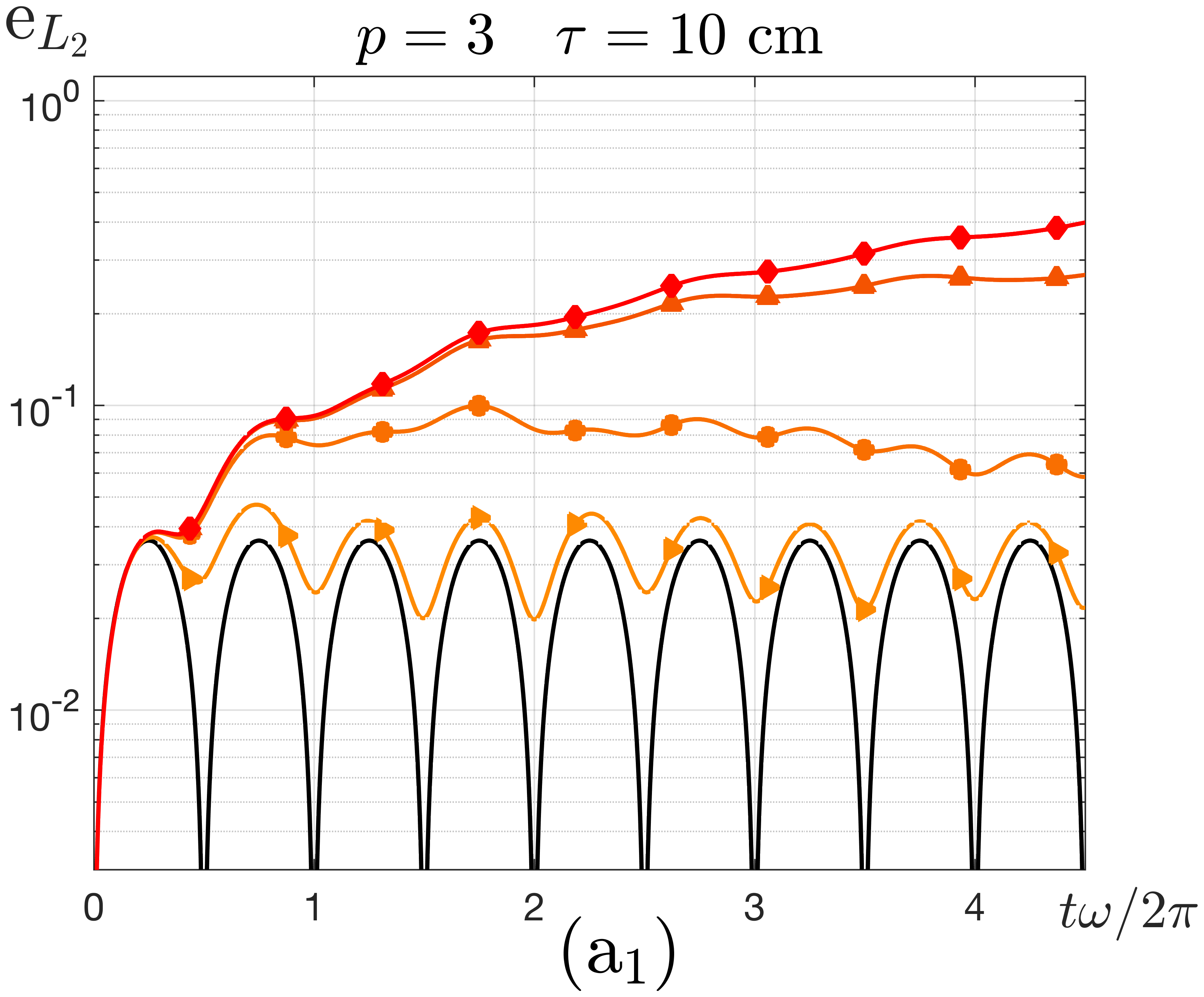}\end{subfigure} 
    \begin{subfigure}[b]{0.27\textwidth}\includegraphics[height=4.0cm]{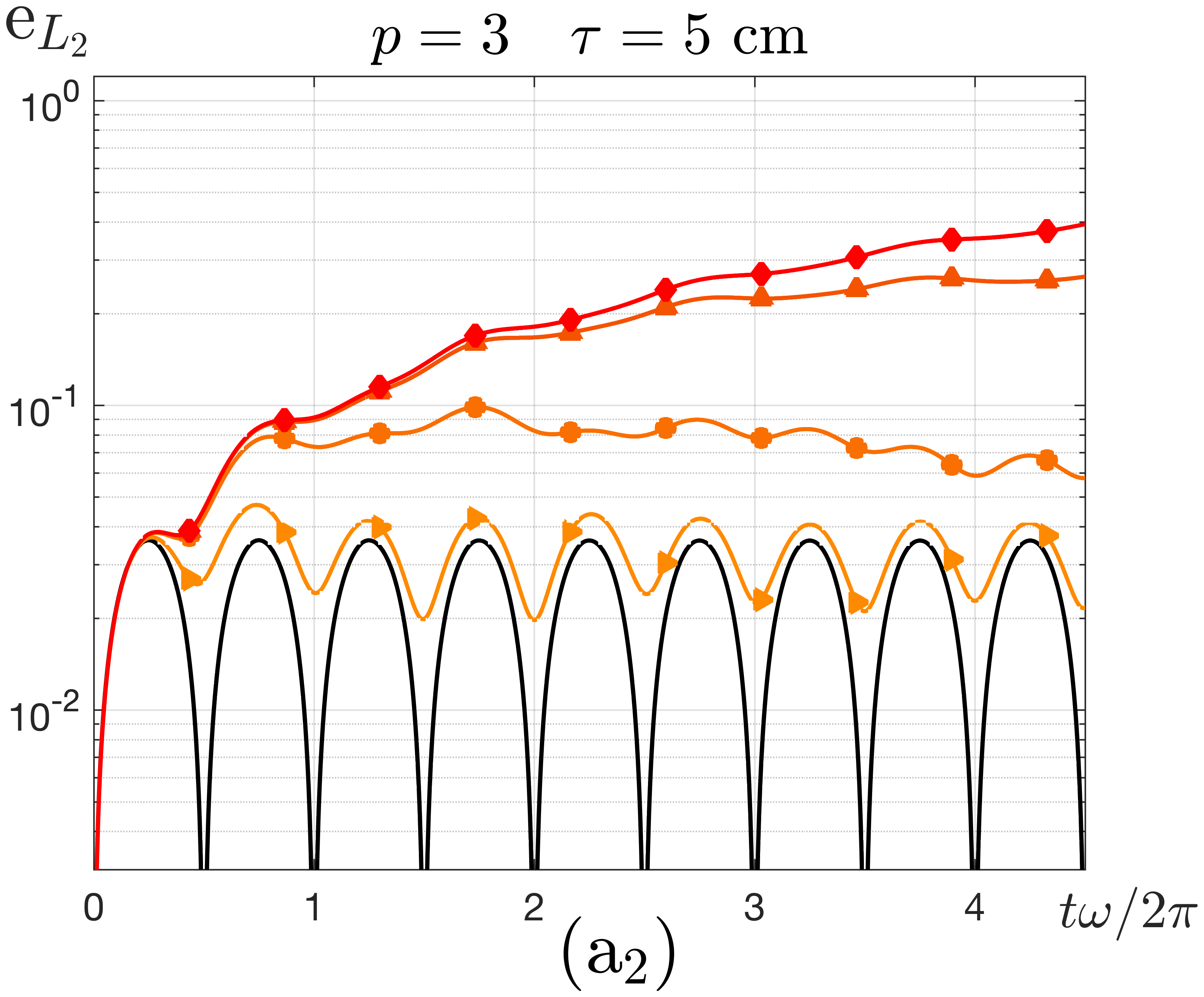}\end{subfigure} 
    \begin{subfigure}[b]{0.44\textwidth}\includegraphics[height=4.0cm]{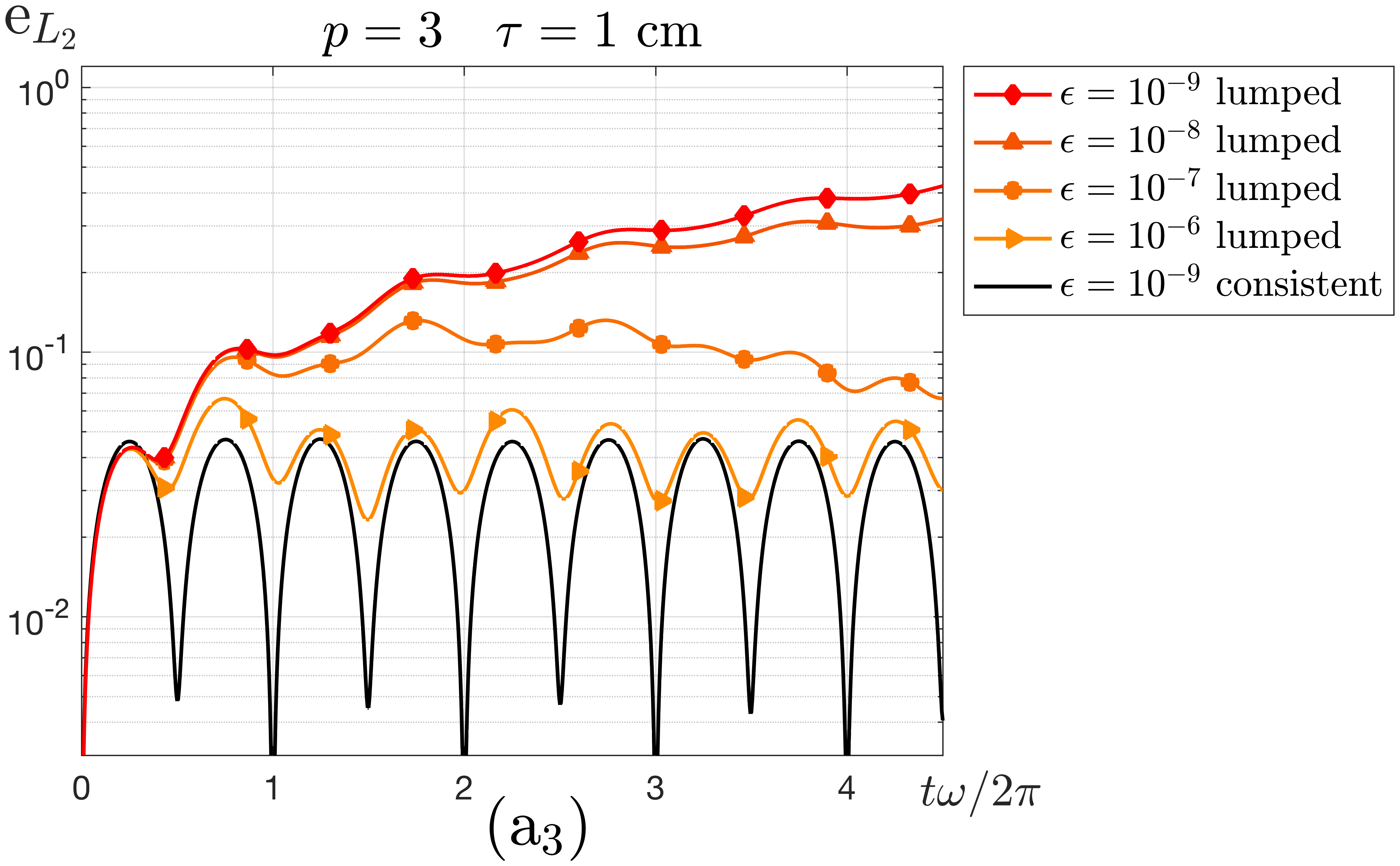}\end{subfigure}\\
    \begin{subfigure}[b]{0.27\textwidth}\includegraphics[height=4.0cm]{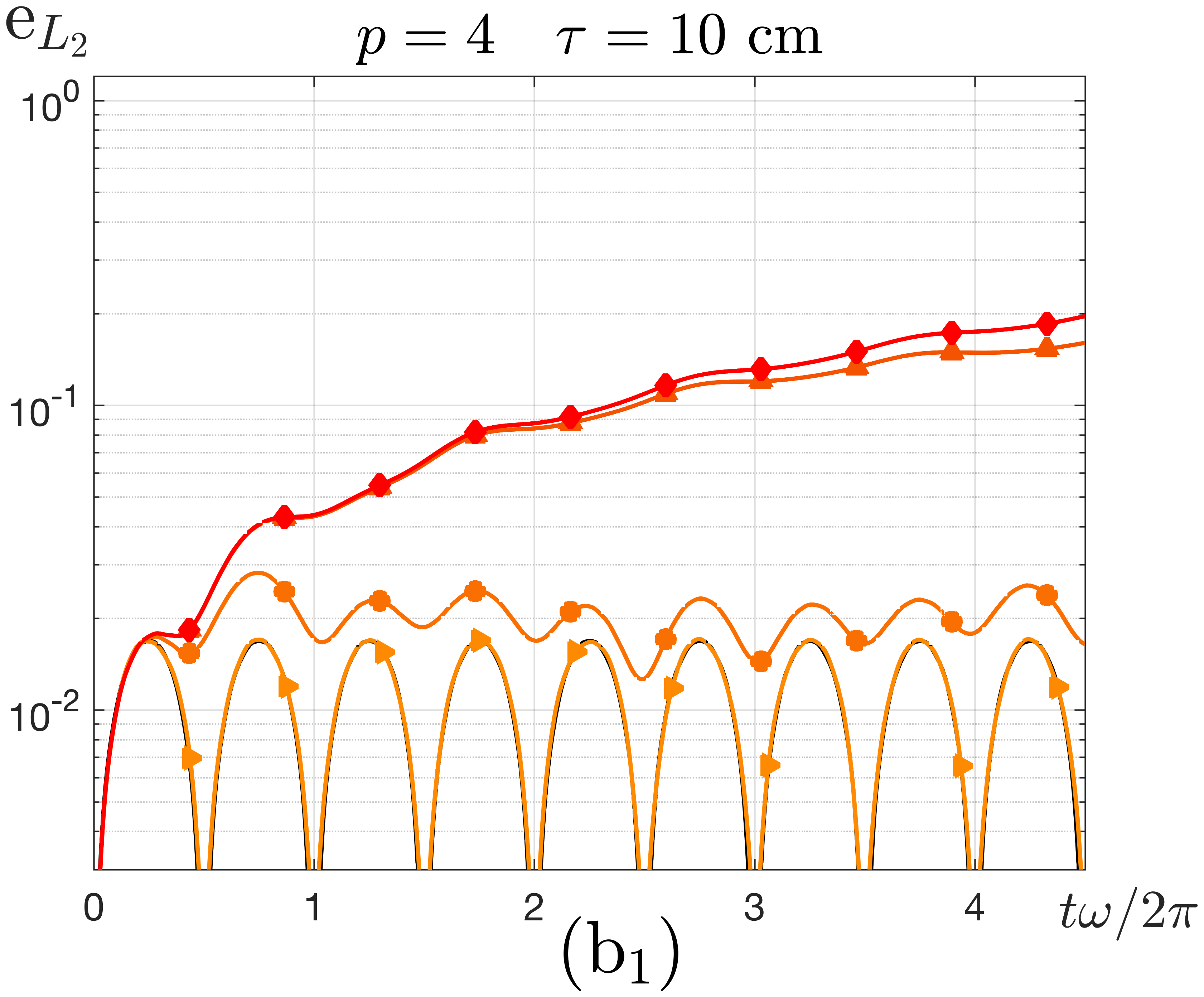}\end{subfigure}
    \begin{subfigure}[b]{0.27\textwidth}\includegraphics[height=4.0cm]{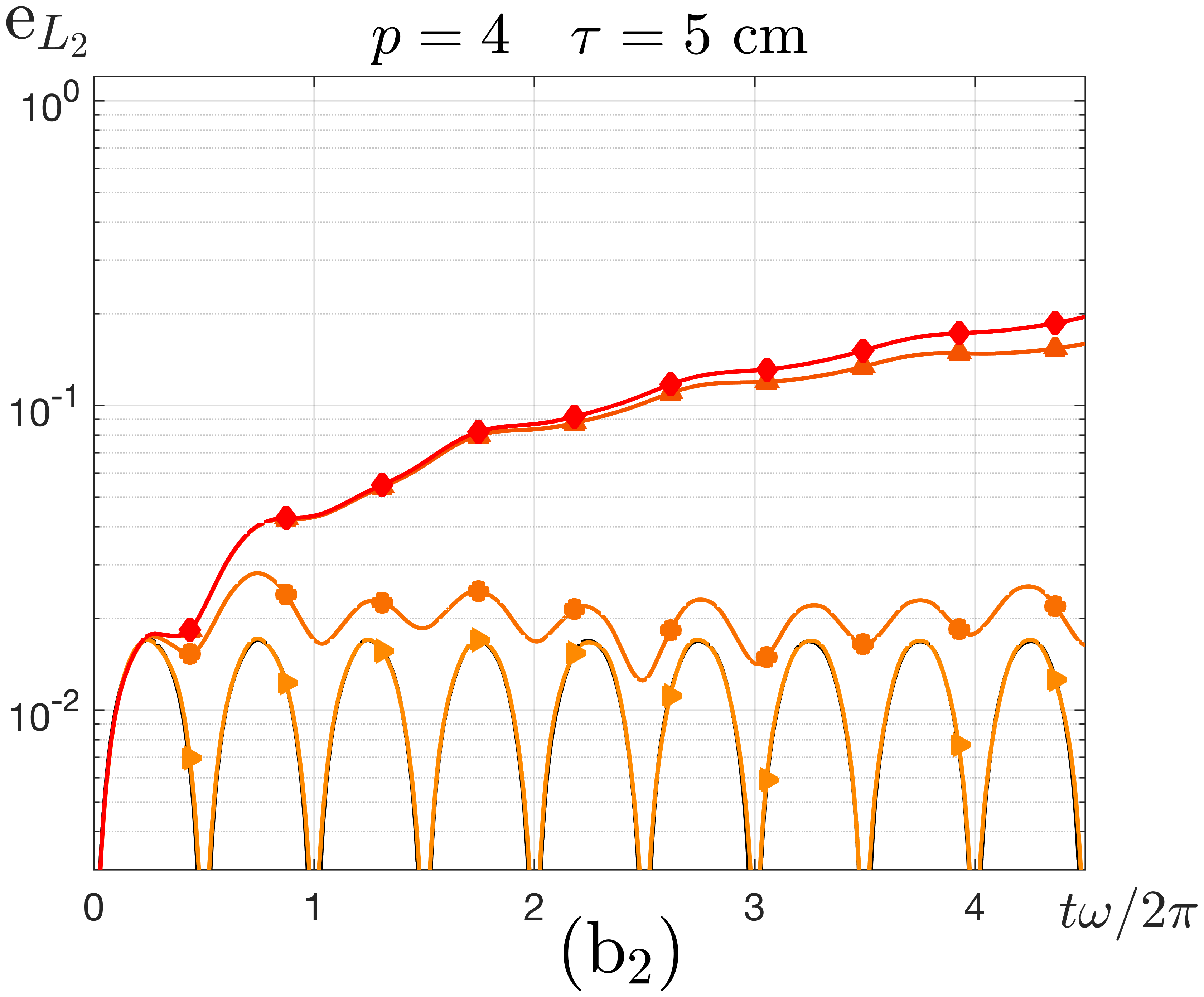}\end{subfigure}
    \begin{subfigure}[b]{0.44\textwidth}\includegraphics[height=4.0cm]{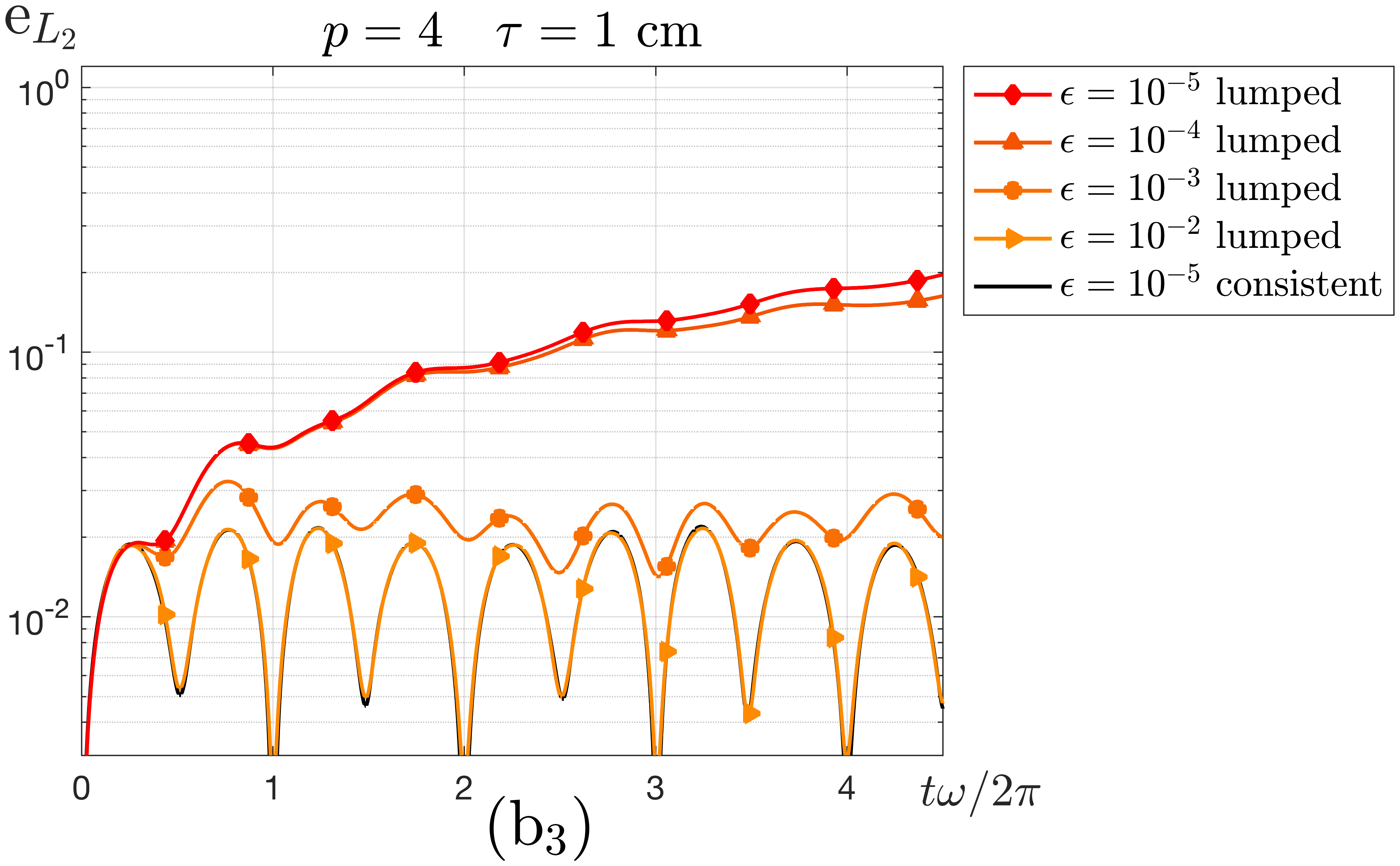}\end{subfigure}
    \caption{\Cref{ex: plate_with_cut-out}: $L^2$ error for the consistent and lumped mass solutions for different spline orders, thicknesses and trimming parameters.} 
    \label{fig: trimmed_plate_with_cut_out_err_cons_lumped}
\end{figure}
\begin{figure}[H]	
    \centering
    \begin{subfigure}[b]{0.27\textwidth}\includegraphics[height=4.0cm]{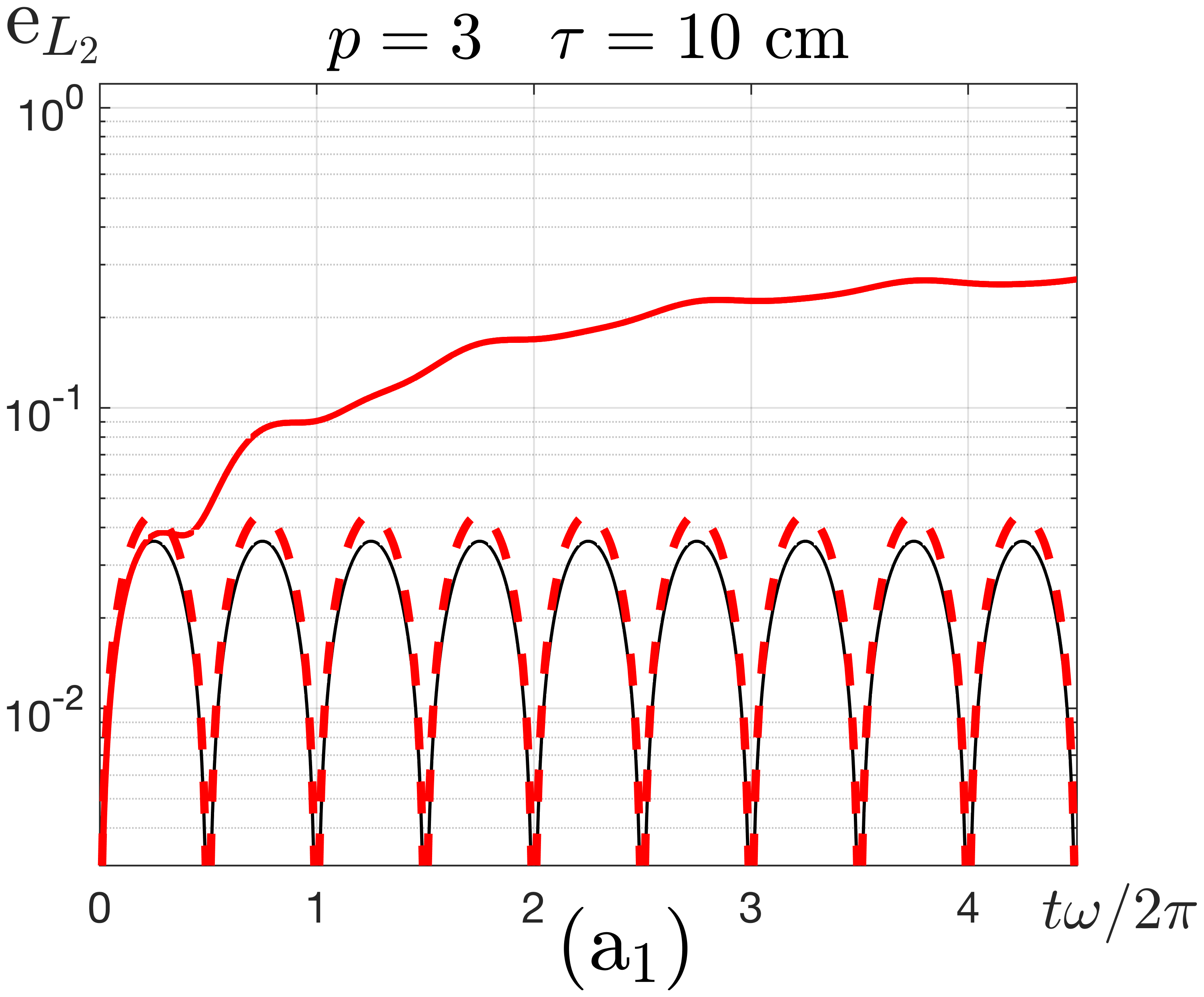}
    \end{subfigure}
    \begin{subfigure}[b]{0.27\textwidth}\includegraphics[height=4.0cm]{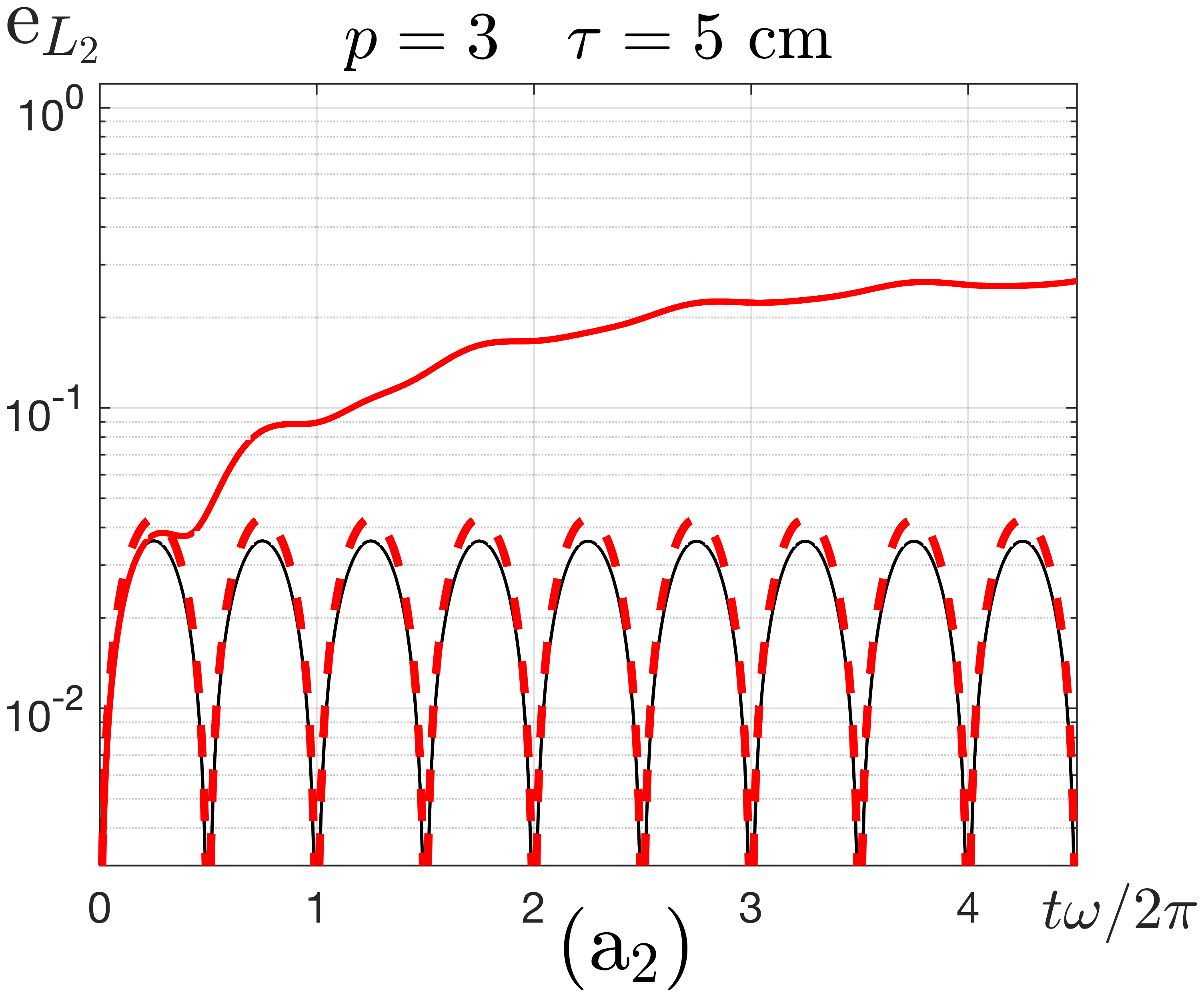}
    \end{subfigure}
    \begin{subfigure}[b]{0.44\textwidth}\includegraphics[height=4.0cm]{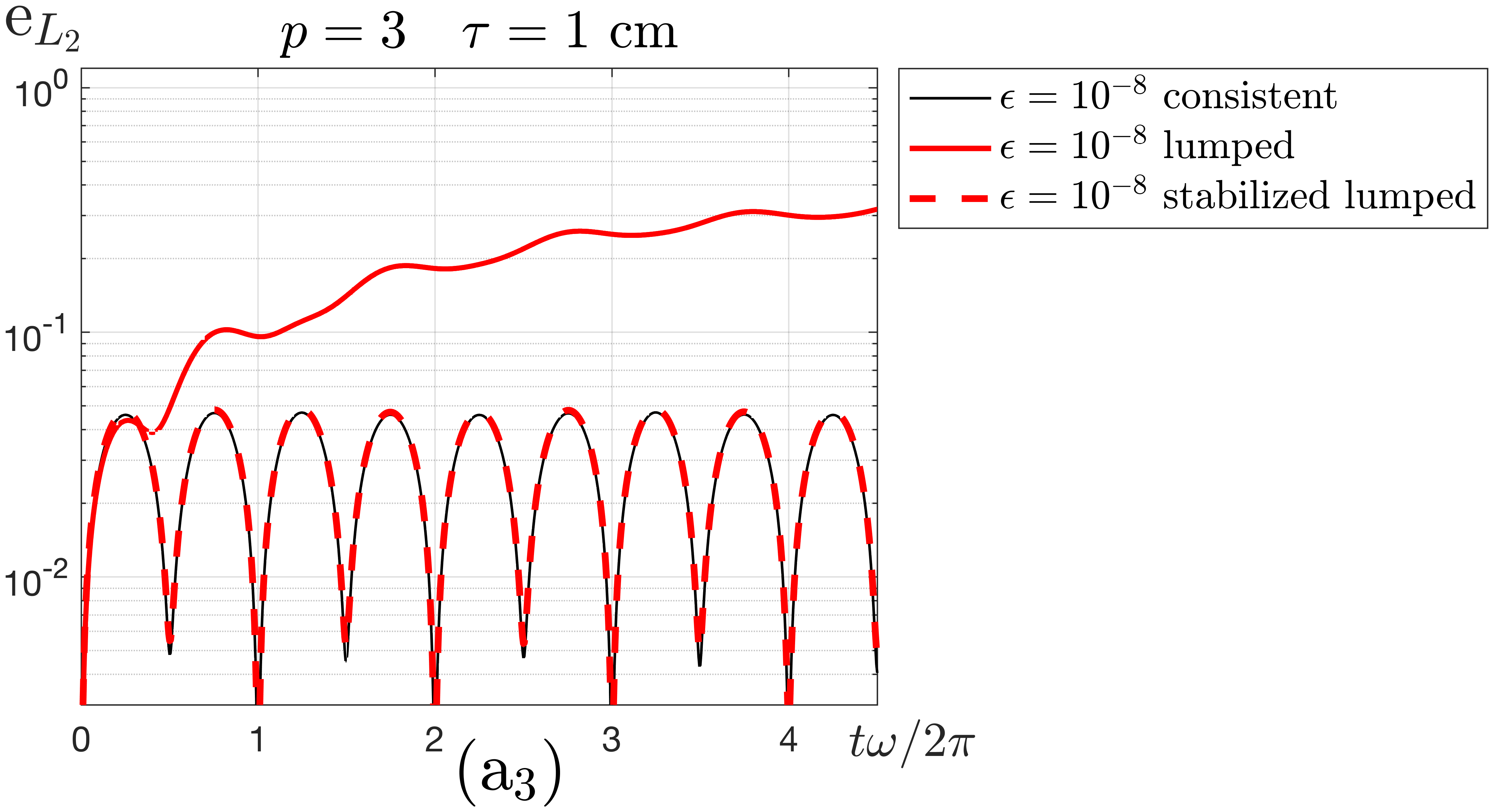}
    \end{subfigure}\\
    \begin{subfigure}[b]{0.27\textwidth}\includegraphics[height=4.0cm]{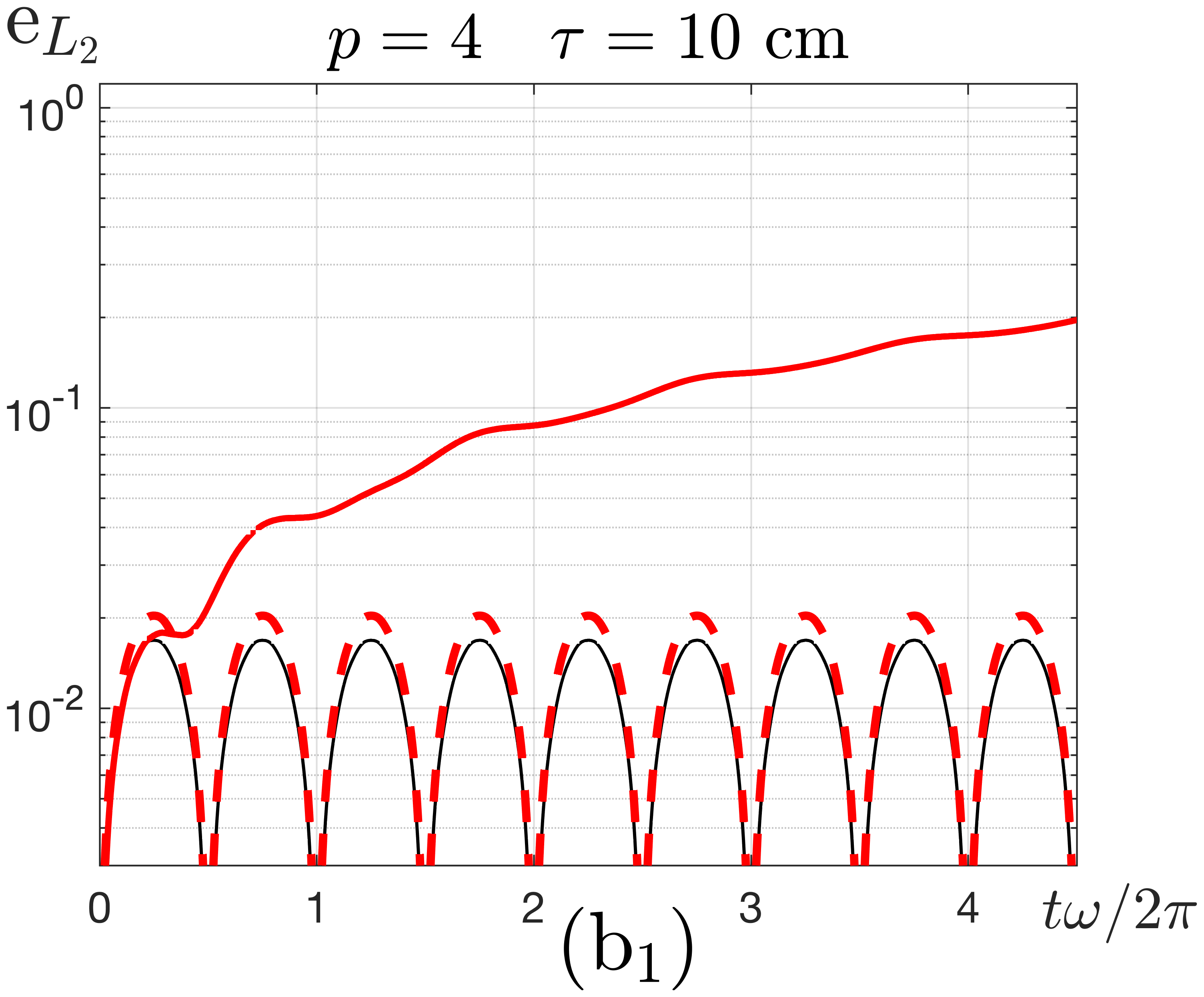}
    \end{subfigure}
    \begin{subfigure}[b]{0.27\textwidth}\includegraphics[height=4.0cm]{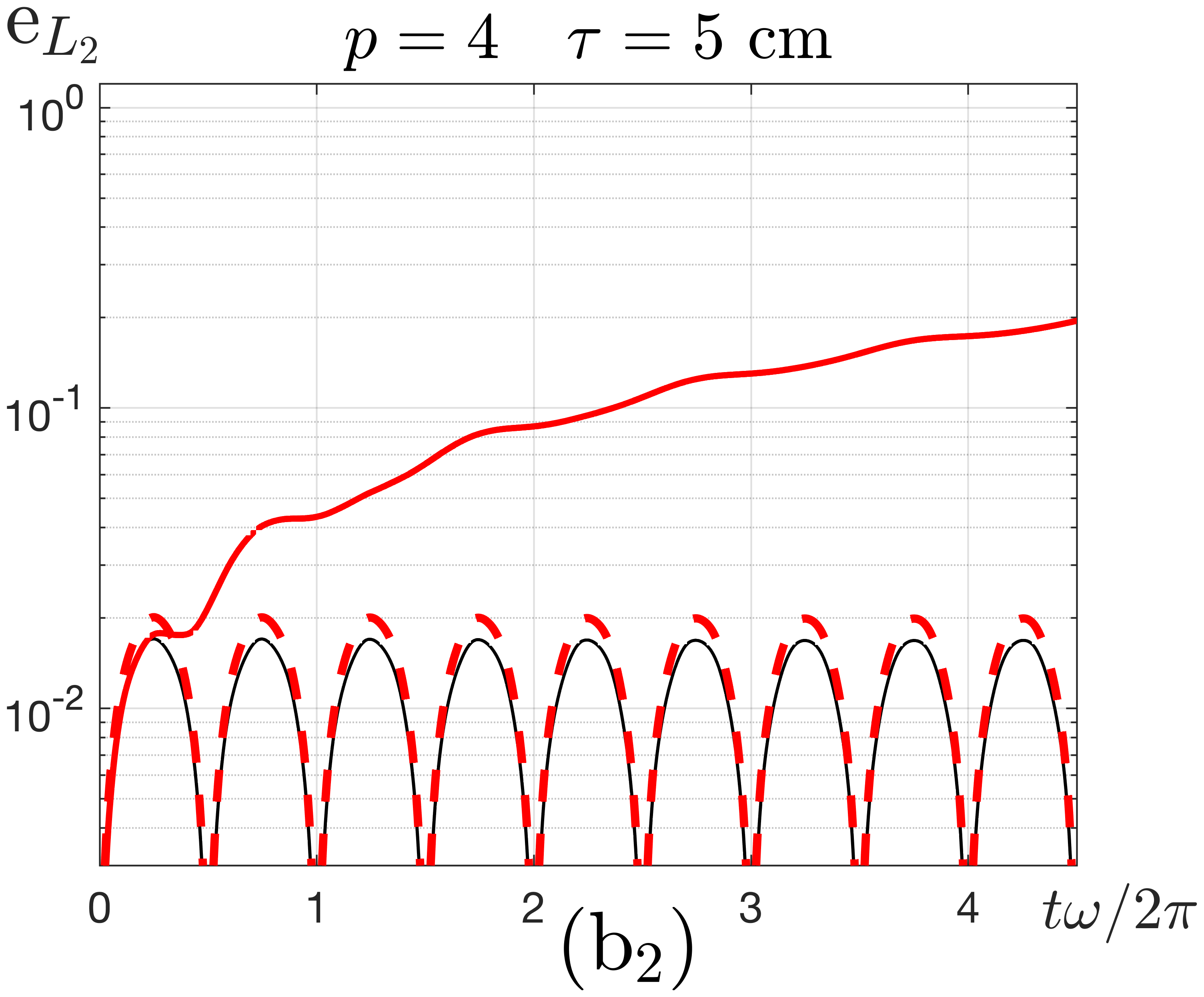}
    \end{subfigure}
    \begin{subfigure}[b]{0.44\textwidth}\includegraphics[height=4.0cm]{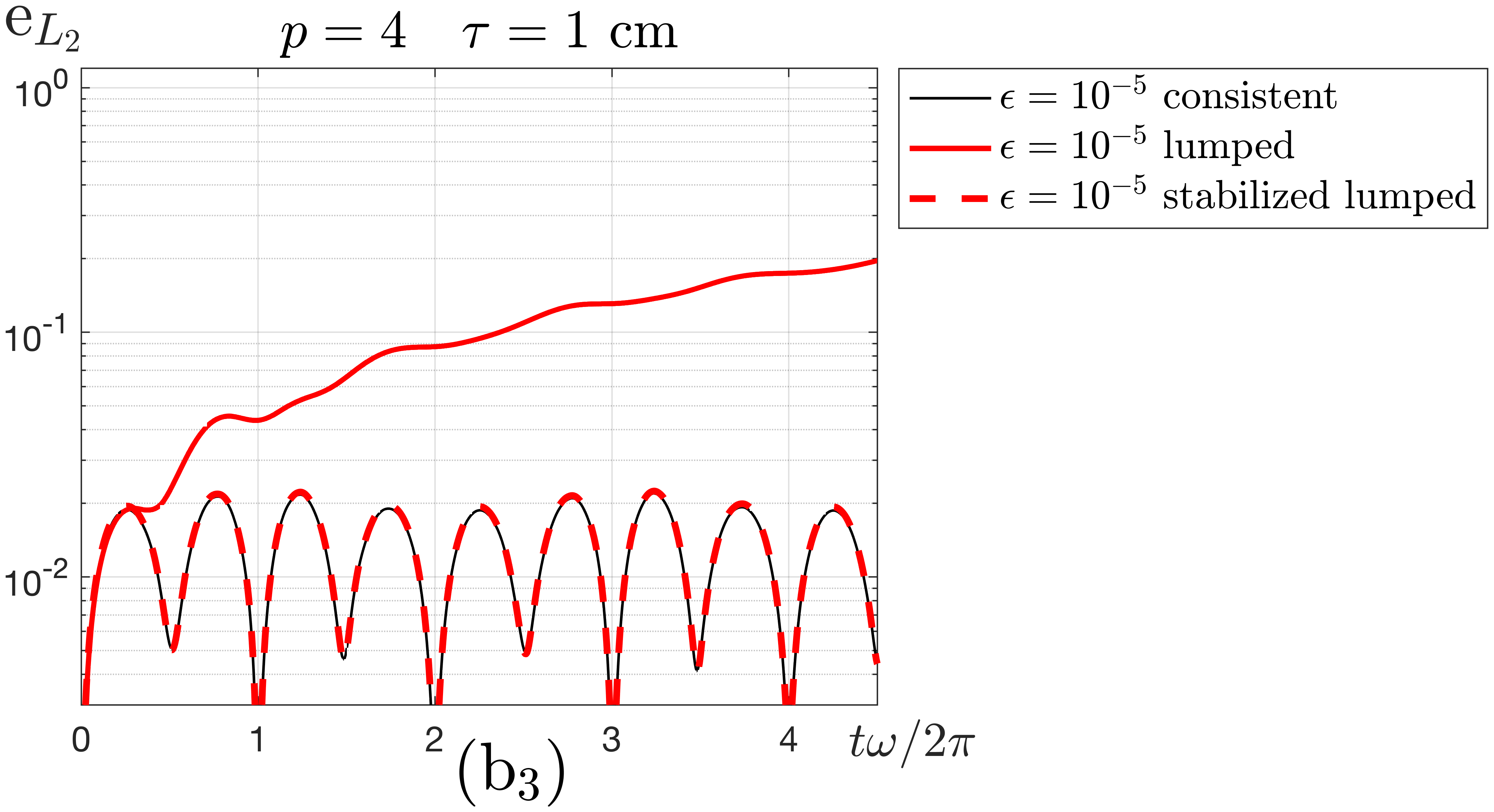}
    \end{subfigure}
    \caption{\Cref{ex: plate_with_cut-out}: $L^2$ error for the consistent and (stabilized) lumped mass solutions for different spline orders, thicknesses and trimming parameters.}
    \label{fig: trimmed_plate_with_cut_out_err_cons_stab_lumped}
\end{figure}
\end{example}

\begin{example}[Fuselage window]
\label{ex: fuselage_window}
Our fourth example is one step closer to industrial type examples and models an aircraft fuselage window. This is our first encounter with an actual shell structure, whose mid-surface is defined by the parameterization
\begin{equation}
    \bm{x} = \bm{\mathcal{F}}(\xi_1,\xi_2) = 
    \begin{pmatrix} 
    \xi_1-\frac{1}{2} \\ 
    R \sin{\left(\xi_2-\frac{1}{2}\right)} \\ 
    R \cos{\left(\xi_2-\frac{1}{2}\right)} 
    \end{pmatrix},
\end{equation}
where $R=1$ m is the radius of curvature. A window, consisting in a rectangular opening with rounded corners, is cut out from a fuselage panel of side length $L=1$ m (\figref{fig: fuselage_window}{b}). The width and height of the opening are $2a=0.3$ m and $2b=0.4$ m, respectively, and its corners form circular arcs of radius $r=0.08$ m (\figref{fig: fuselage_window}{a}). As in the previous examples, we consider a manufactured solution
\begin{equation}
\label{eq: manufactured_sol_fuselage}
    \bm{u}(\bm{\xi},t) = w(\bm{\xi})\phi(t) \bm{e}_3, \quad \text{and} \quad \bm{\theta}(\bm{\xi},t) = - \sum_{i=1}^2 (\bm{a}_3 \cdot \bm{u}_{,i}) \bm{a}^i,
\end{equation}
where the spatial part $w \colon S \to \mathbb{R}$ and temporal part $\phi \colon [0,\; +\infty) \to \mathbb{R}$ are defined as
\begin{equation*}
    w(\bm{\xi}) = a_0 w_1 w_2 w_n \circ \bm{l}(\bm{\xi}), \quad \text{and} \quad \phi(t) = \sin(\omega t),
\end{equation*}
with the functions
\begin{align*}
    w_1(\bm{y}) &= e^{1-\left(\frac{y_1}{a}\right)^2}, \\
    w_2(\bm{y}) &= \left(1+e^{\beta\left(\frac{y_2}{a}-1\right)}\right)^{-1} \left(1+e^{\beta\left(-\frac{y_2}{a}-1\right)}\right)^{-1}, \\
    w_n(\bm{y}) &= \cos\left(2\pi n_0 \sqrt[n]{\frac{1}{2}\left[\left(\frac{y_1}{a}+\frac{y_2}{b}\right)^n +\left(\frac{y_1}{a}-\frac{y_2}{b}\right)^n \right]}\right), \\
\end{align*}
and
\begin{equation*}
    \bm{l}(\bm{\xi}) =
    \begin{pmatrix}
        \xi_1 - \frac{1}{2} \\
        \xi_2 - \frac{1}{2}
    \end{pmatrix}
\end{equation*}
with parameter values $a_0 = 10$ cm, $\beta = 10$, $n_0 = 3$, $n = 6$, Poisson ratio $\nu=0.25$ and $\omega = \frac{1}{10L}\sqrt{\frac{E}{\rho}}$ with unit material parameters.

Dirichlet (or type 1) boundary conditions are prescribed along the outer sides of the panel while Neumann (or type 2) boundary conditions are prescribed all along the window's boundaries. The boundary data (as well as the source term and initial conditions) are computed from the expressions in \eqref{eq: manufactured_sol_fuselage}. The exact solution is shown in \figref{fig: fuselage_window_err_cons_lumped}{e$_1$-e$_3$} for times t$_1$-t$_3$. The problem is discretized with maximally smooth B-splines of order $p$ over a grid of mesh size $h$. The vertical sides of the window are at a distance $\delta$ from the nearest grid line and as usual $\epsilon=\delta/h$ denotes the relative trimming parameter. The numerical solutions for the consistent, lumped and stabilized lumped solutions are computed with the same schemes as before (i.e., the Central Difference method is used for both solutions with a lumped mass and an unconditionally stable Newmark method is used for the solution with the consistent mass). Snapshots of the solutions are shown in \Cref{fig: fuselage_window_sol} for the consistent (c$_1$-c$_3$), lumped (l$_1$-l$_3$) and stabilized lumped (s$_1$-s$_3$) solutions. Once again, spurious oscillations are visible along the trimmed boundary for the lumped mass solution, while the other two solutions are visually indistinguishable from the exact one. Those oscillations are not confined to the displacement field and may also affect the stress field, as shown in \figref{fig: fuselage_window_ben}{c} for the first component of the bending stress ($M^{11}$) at the final time t$_3$. Even without an exact or reference solution, the solution for the lumped mass would raise suspicions as it contradicts physical intuition. However, the error is not in the implementation but in the lumping itself. The error for the consistent and lumped mass solutions is reported in \Cref{fig: fuselage_window_err_cons_lumped} and follows a trend in all aspects similar to previous examples. While the accuracy of the solution for the consistent mass is not affected by small trimmed elements, the solution for the lumped mass rapidly deteriorates as $\epsilon$ becomes smaller or $p$ becomes larger (note the different range of values for $\epsilon$ in \figref{fig: fuselage_window_err_cons_lumped}{a$_1$-a$_3$} and \figref{fig: fuselage_window_err_cons_lumped}{b$_1$-b$_3$}). Increasing the slenderness also negatively affects the solution, although to a milder extent. As expected, stabilizing the solution completely resolves most of these issues (\Cref{fig: fuselage_window_err_cons_stab_lumped}). However, the quality of the solution still depends on the slenderness, as it does for boundary-fitted discretizations. Hence, the stabilization only resolves trimming-related issues and we may envision combining it with different techniques for solving common shell-related issues (e.g., locking).

\begin{figure}[H]	
    \centering
    \includegraphics[width=0.8\textwidth]{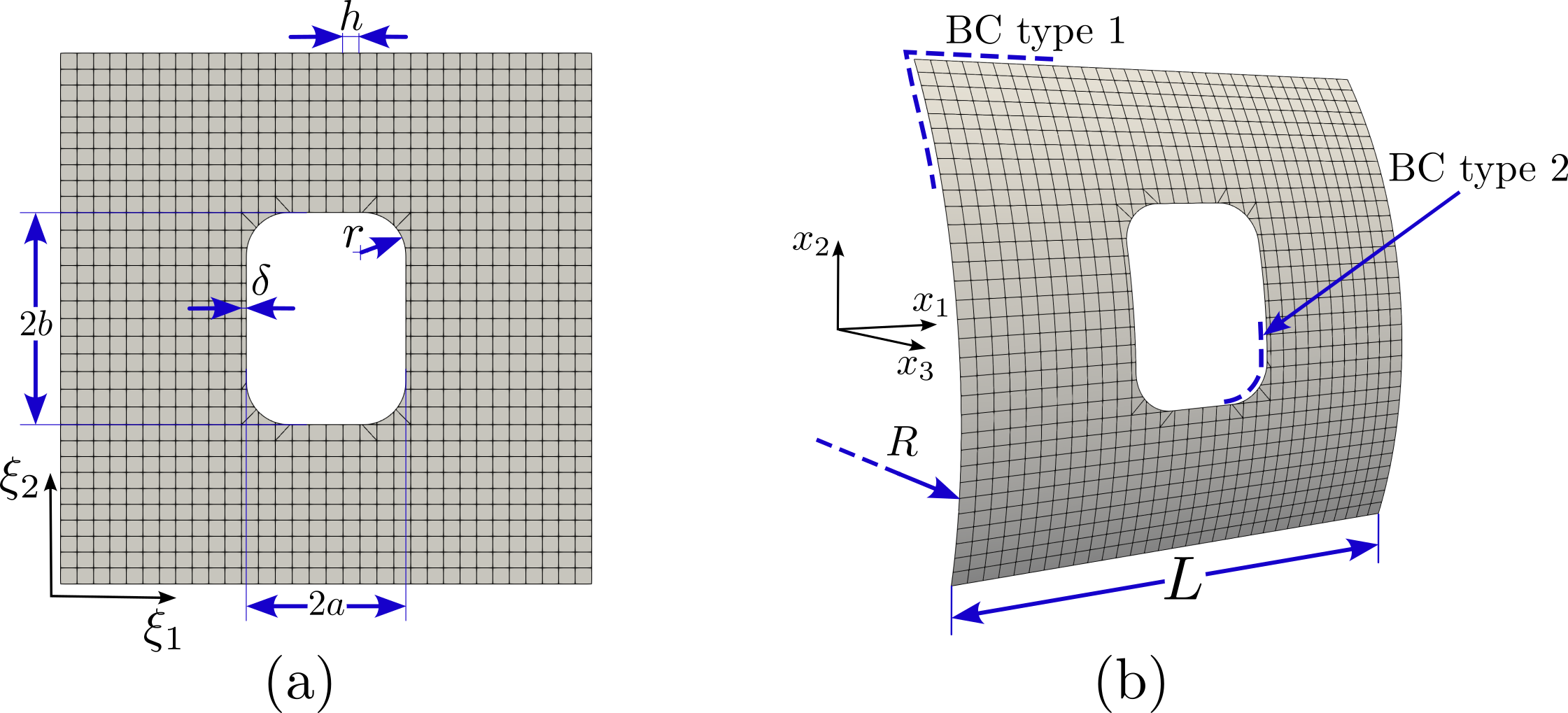}
    \caption{\Cref{ex: fuselage_window}: Aircraft fuselage window.}
    \label{fig: fuselage_window}
\end{figure}










\begin{figure}[H]	
    \centering
    \includegraphics[width=0.7\textwidth]{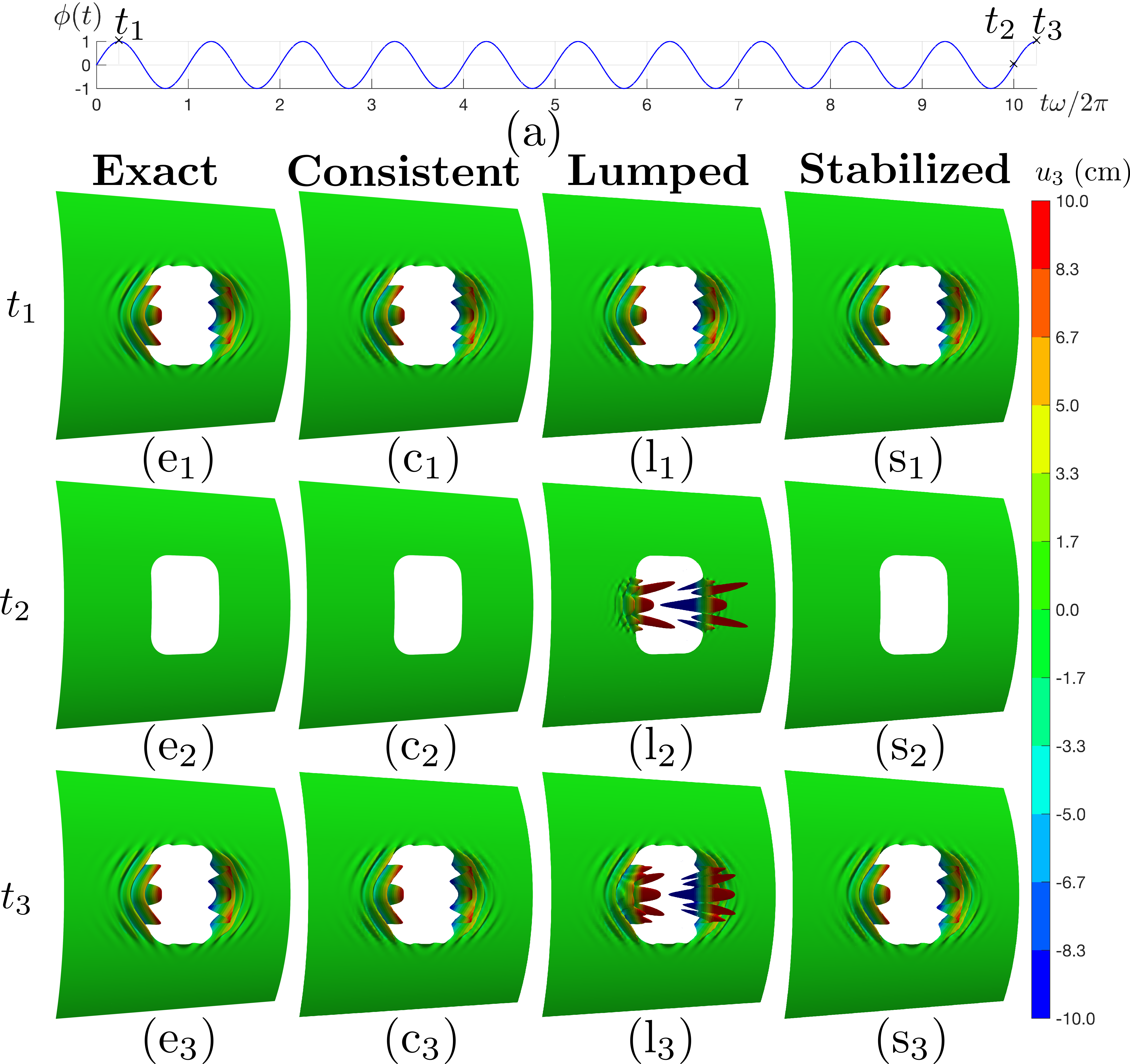}
    \caption{\Cref{ex: fuselage_window}: Snapshots of the exact and numerical solutions.}
    \label{fig: fuselage_window_sol}
\end{figure}

\begin{figure}[H]	
    \centering
    \includegraphics[width=0.7\textwidth]{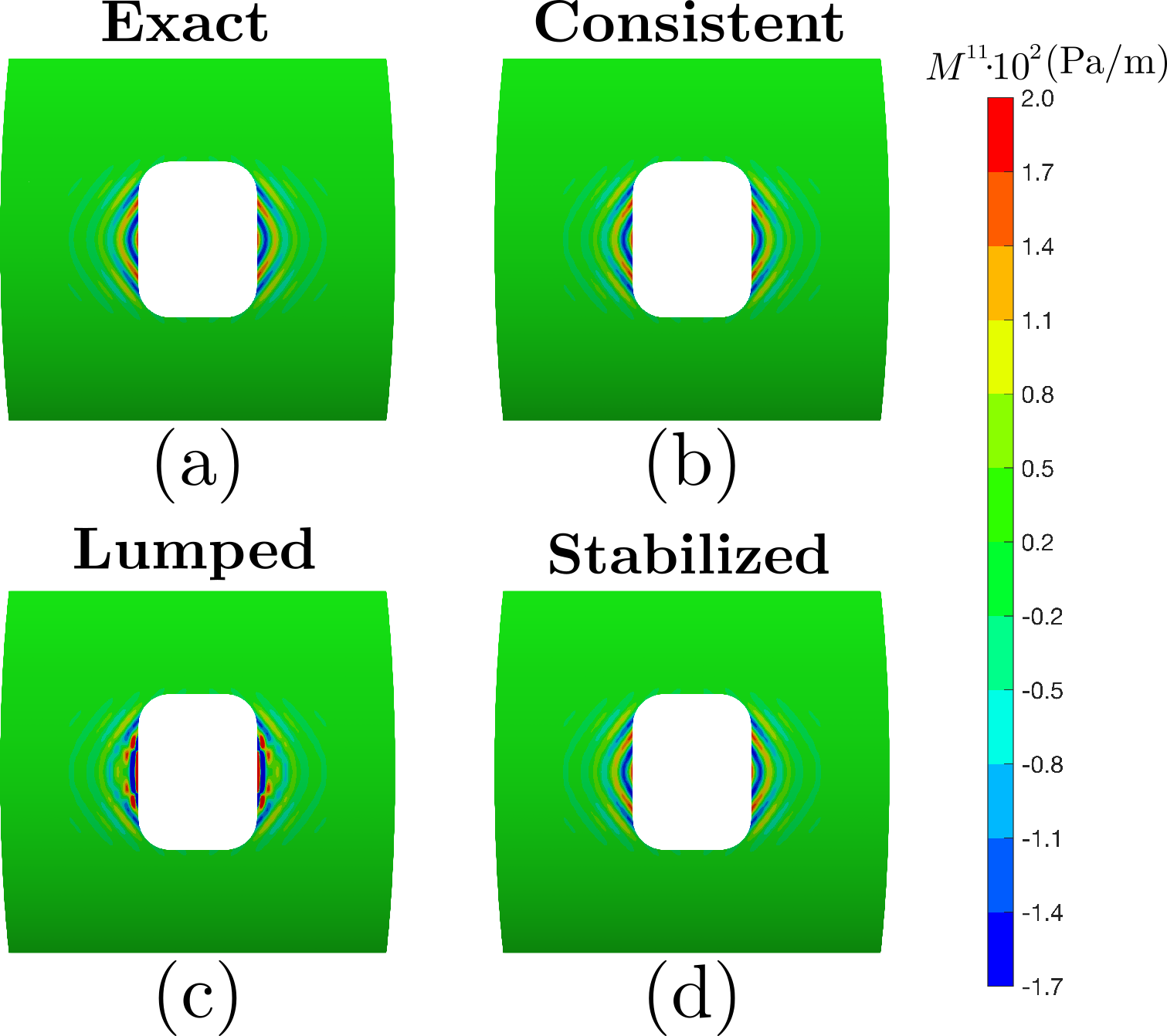}
    \caption{\Cref{ex: fuselage_window}: Contour plots of the first component of the bending stress for the exact and numerical solutions.}
    \label{fig: fuselage_window_ben}
\end{figure}

\begin{figure}[H]	
    \centering
    \begin{subfigure}[b]{0.27\textwidth}\includegraphics[height=4.0cm]{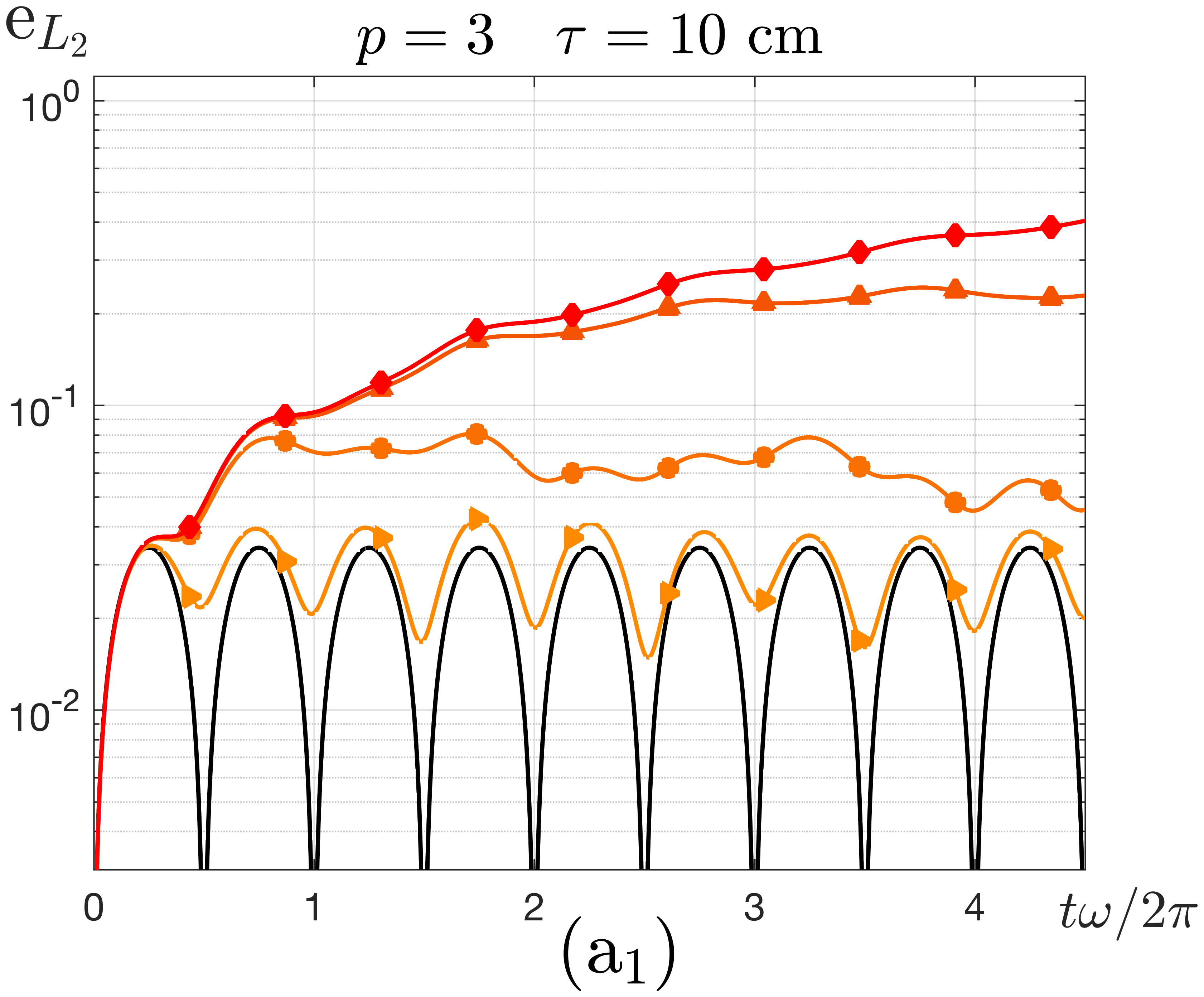}\end{subfigure} 
    \begin{subfigure}[b]{0.27\textwidth}\includegraphics[height=4.0cm]{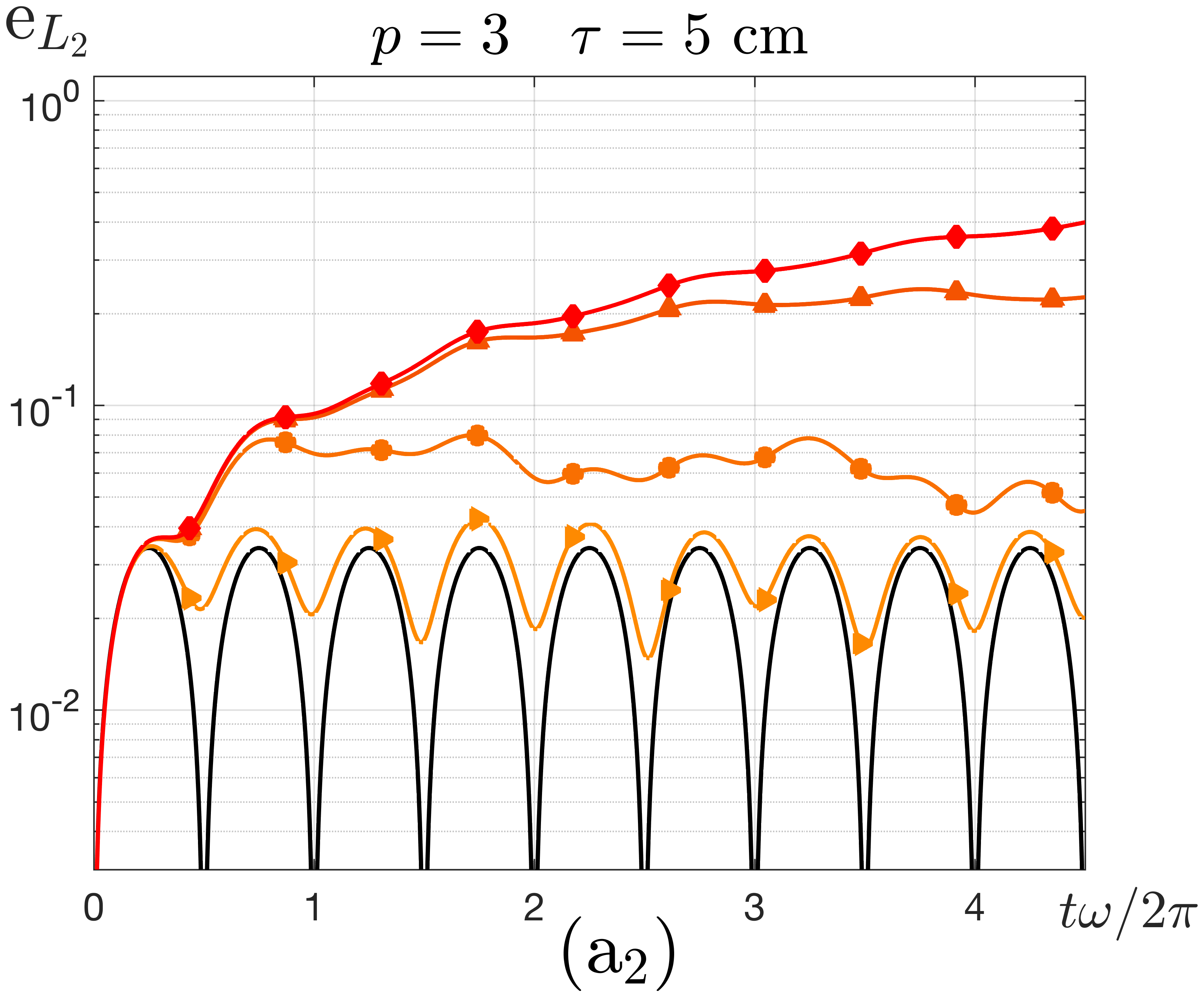}\end{subfigure} 
    \begin{subfigure}[b]{0.44\textwidth}\includegraphics[height=4.0cm]{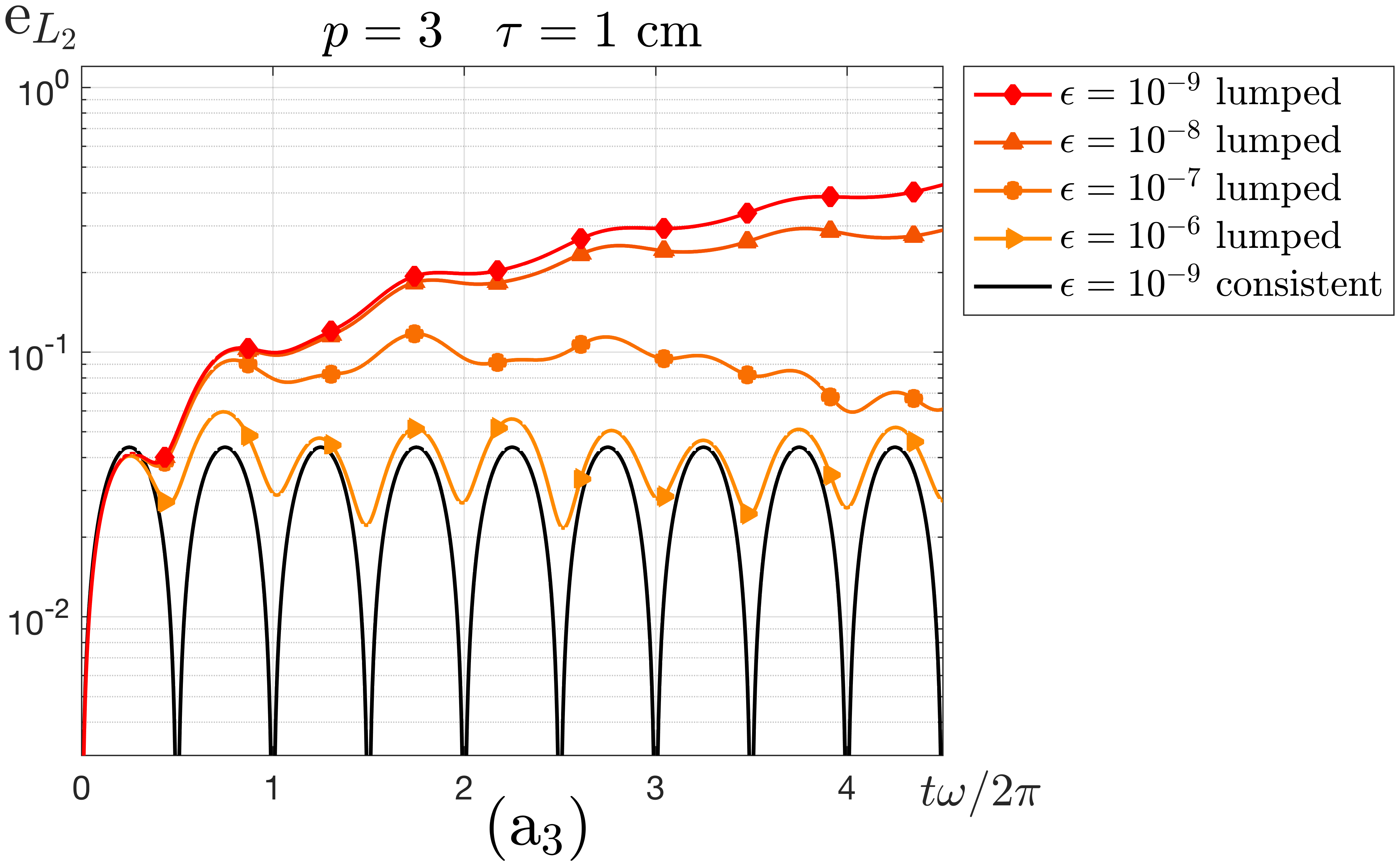}\end{subfigure}\\
    \begin{subfigure}[b]{0.27\textwidth}\includegraphics[height=4.0cm]{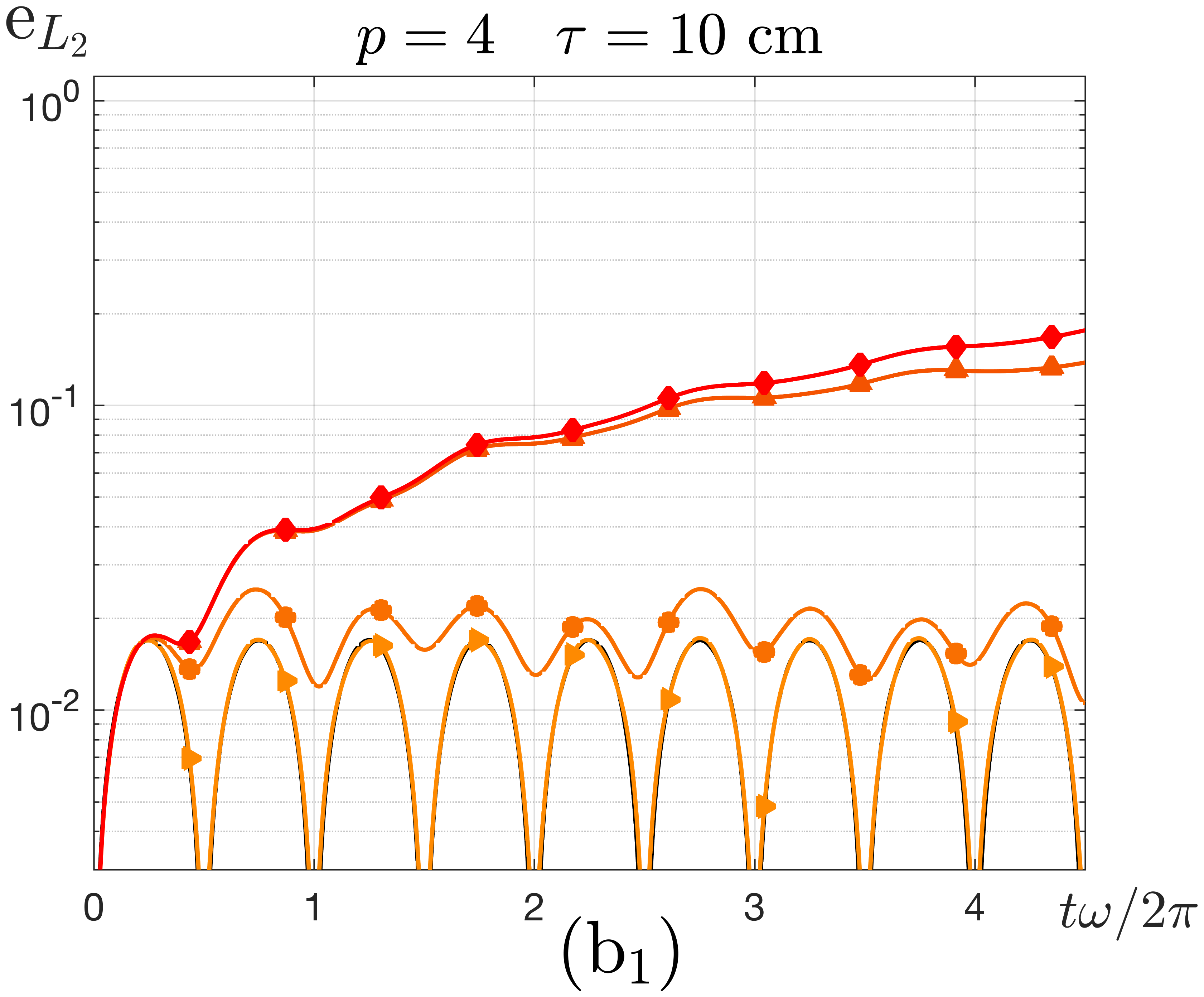}\end{subfigure}
    \begin{subfigure}[b]{0.27\textwidth}\includegraphics[height=4.0cm]{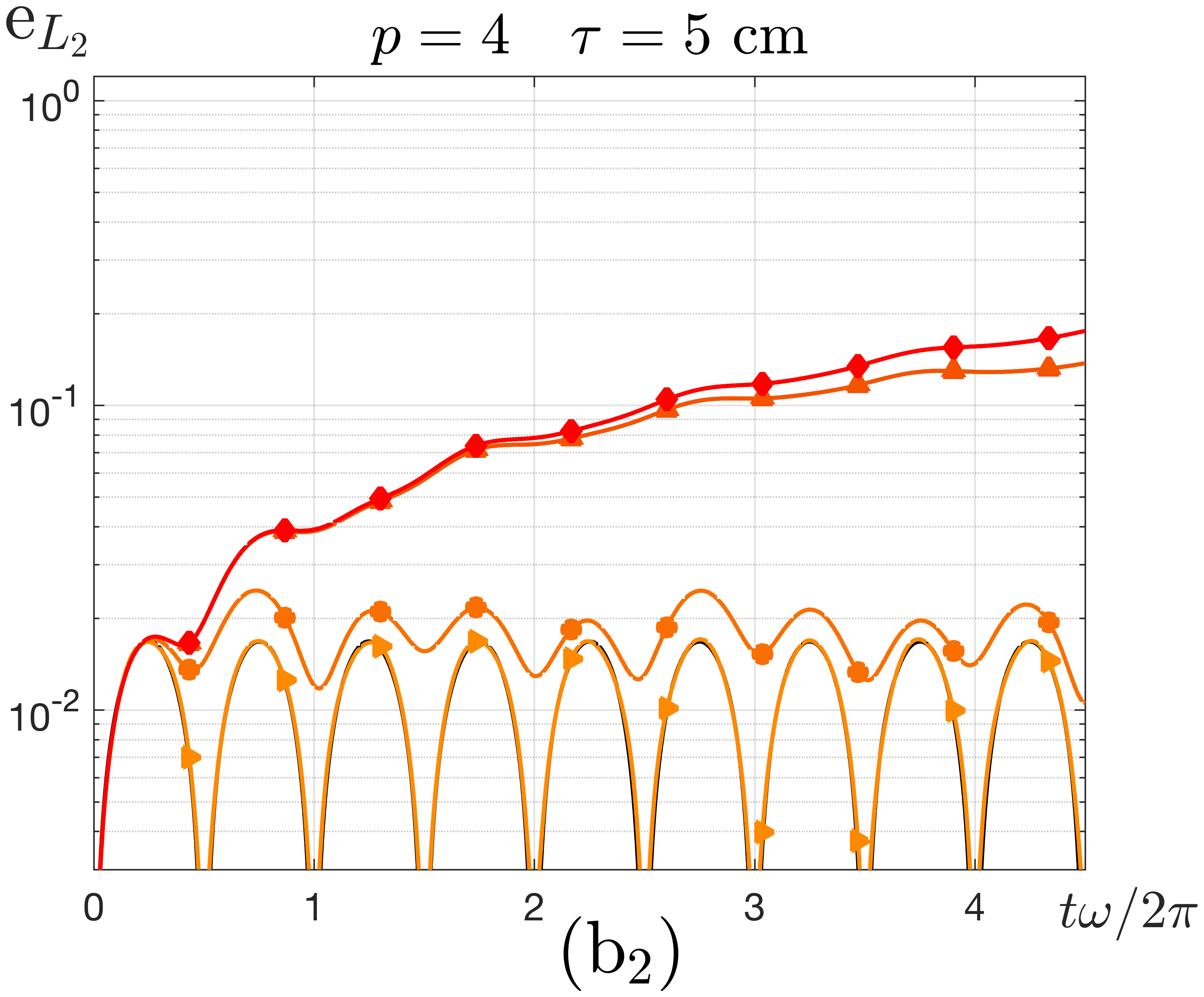}\end{subfigure}
    \begin{subfigure}[b]{0.44\textwidth}\includegraphics[height=4.0cm]{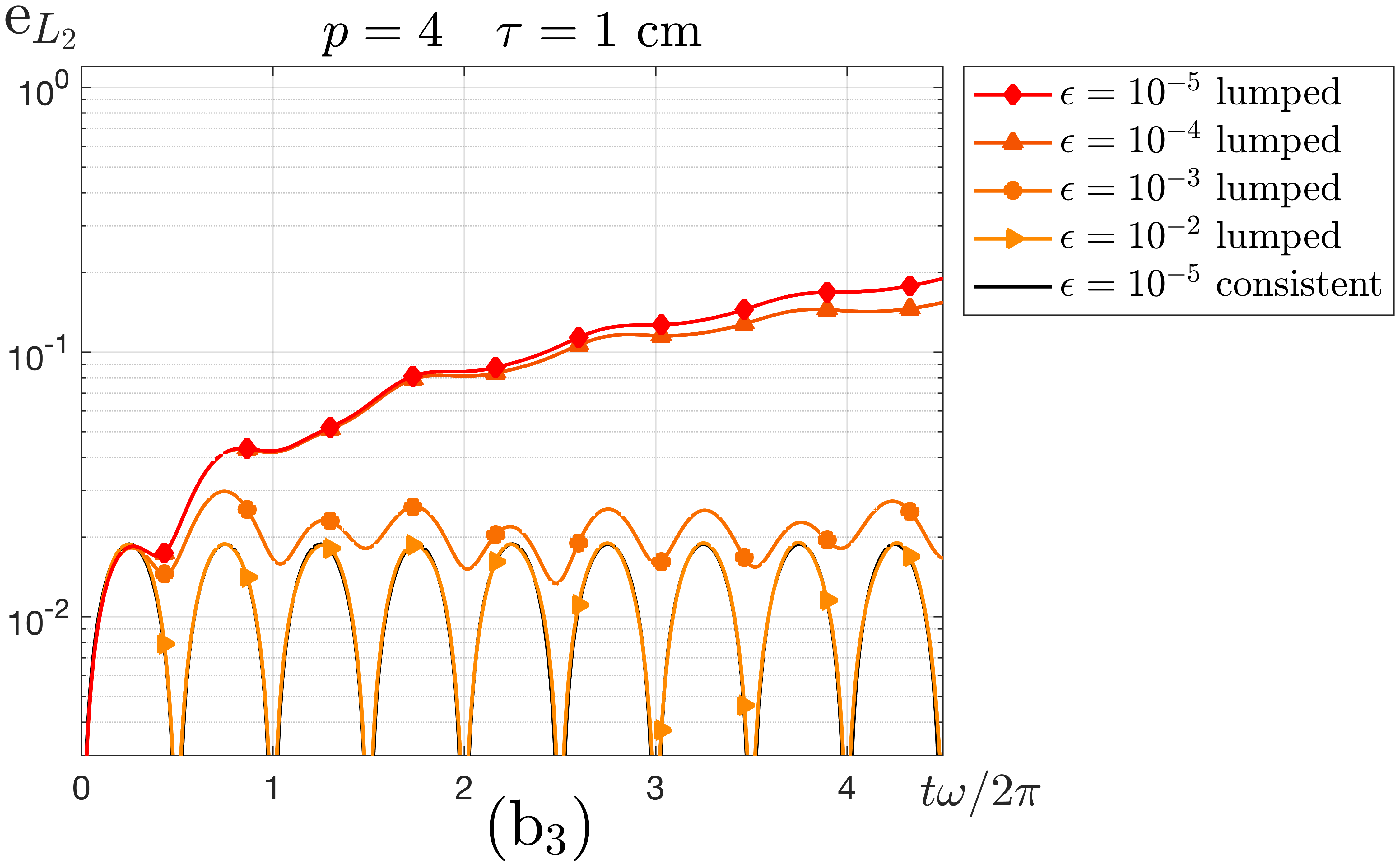}\end{subfigure}
    \caption{\Cref{ex: fuselage_window}: $L^2$ error for the consistent and lumped mass solutions for different spline orders, thicknesses and trimming parameters.} 
    \label{fig: fuselage_window_err_cons_lumped}
\end{figure}
\begin{figure}[H]	\centering
    \begin{subfigure}[b]{0.27\textwidth}\includegraphics[height=4.0cm]{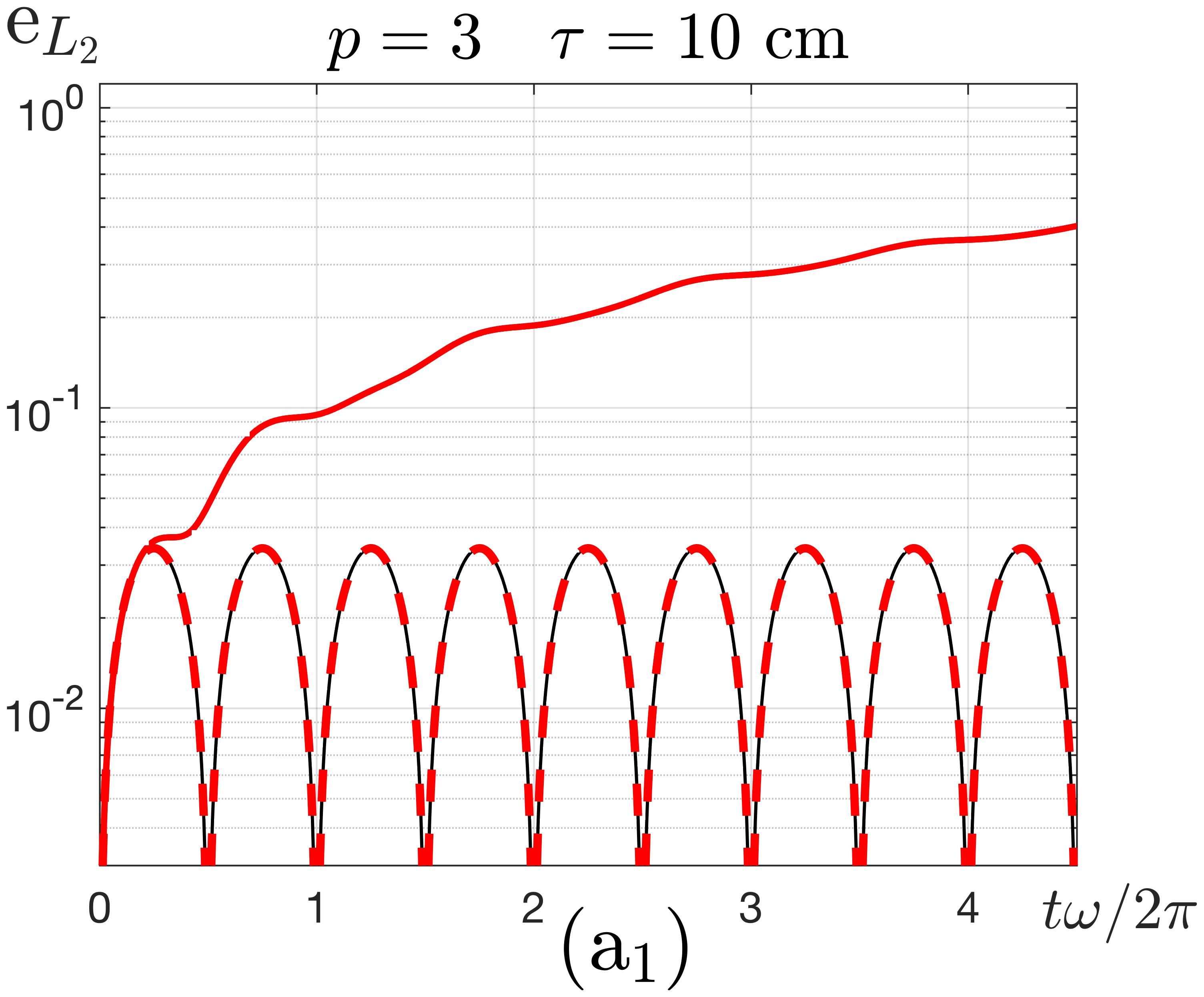}
    \end{subfigure}
    \begin{subfigure}[b]{0.27\textwidth}\includegraphics[height=4.0cm]{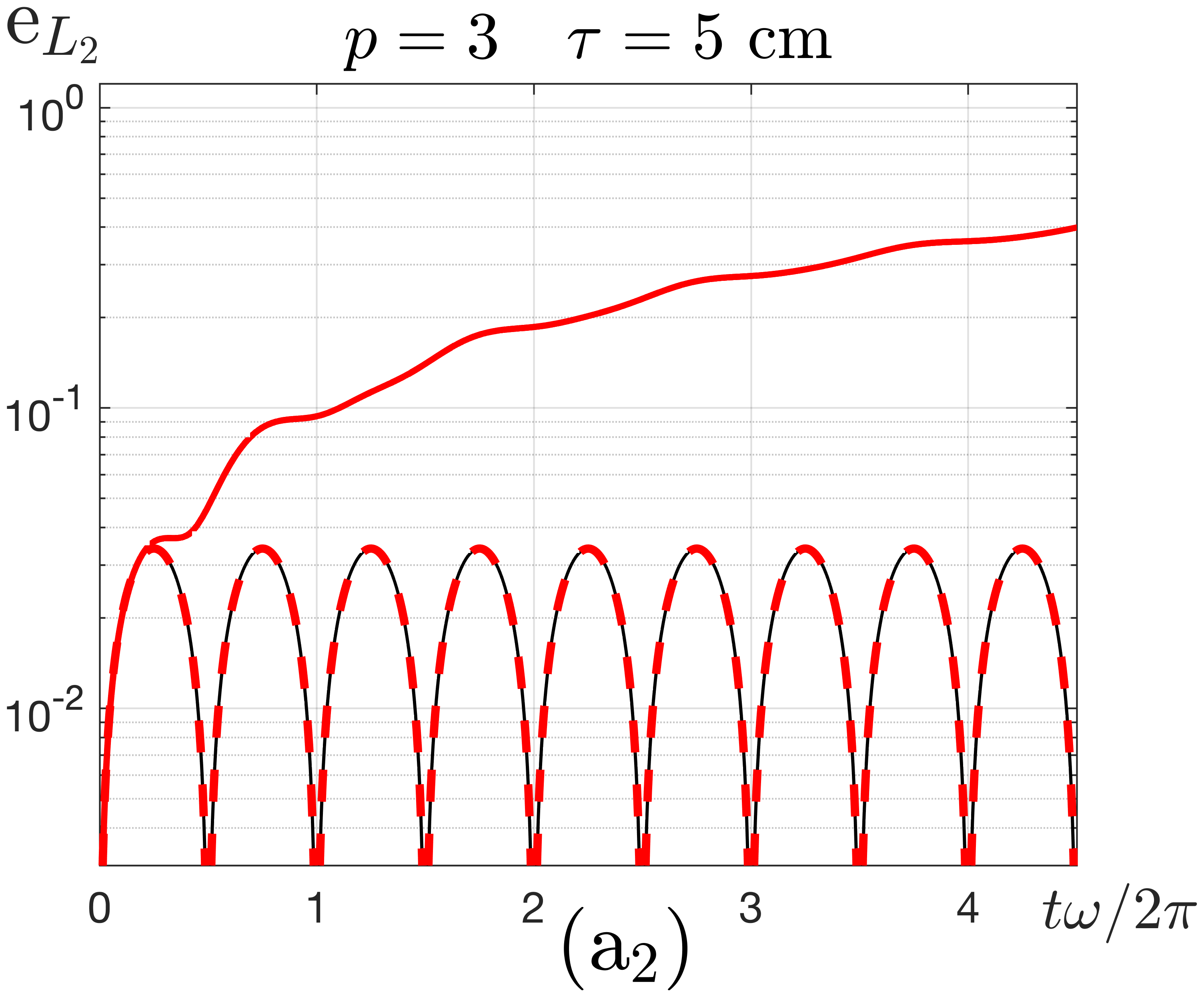}
    \end{subfigure}
    \begin{subfigure}[b]{0.44\textwidth}\includegraphics[height=4.0cm]{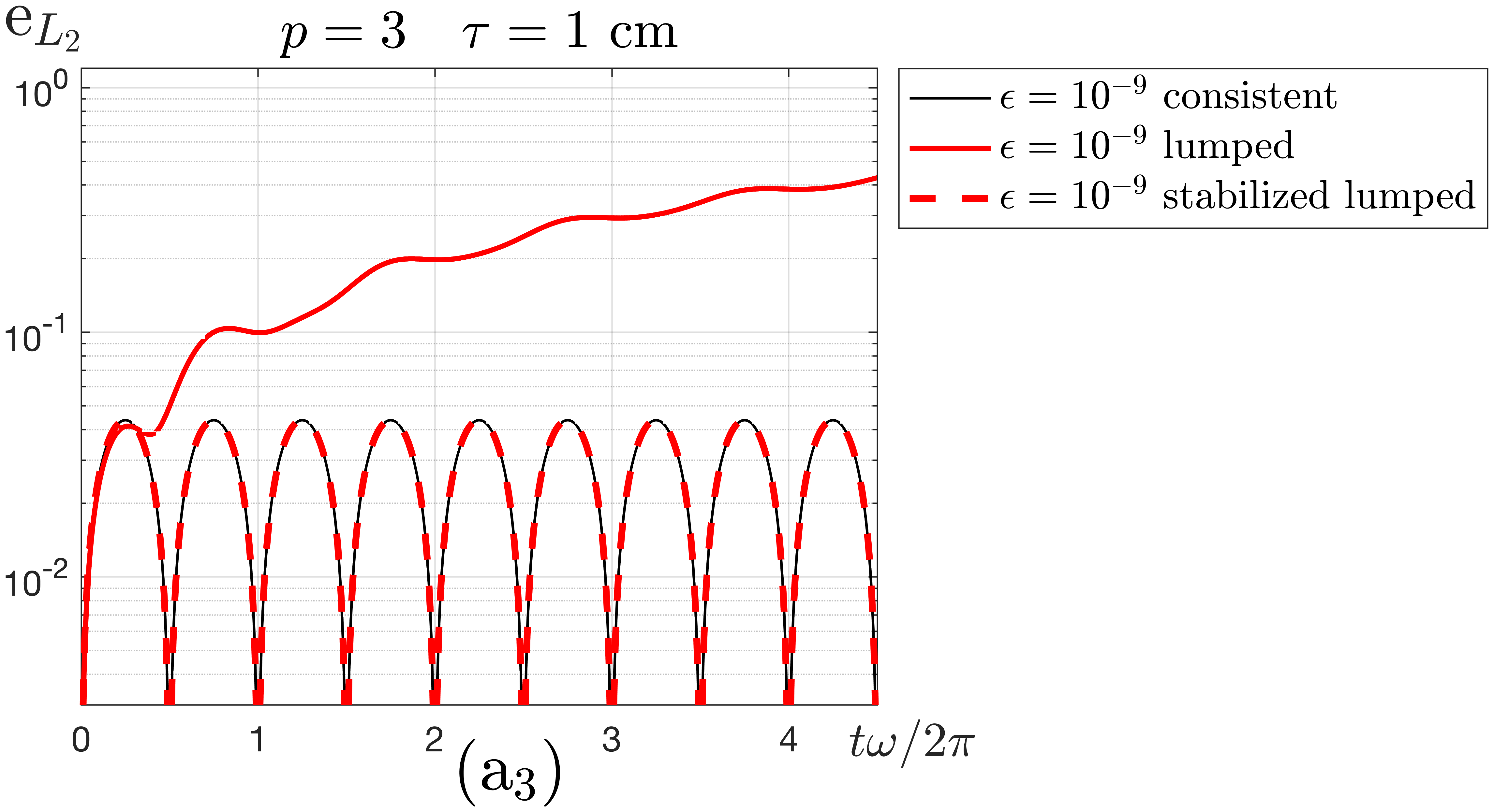}
    \end{subfigure}\\
    \begin{subfigure}[b]{0.27\textwidth}\includegraphics[height=4.0cm]{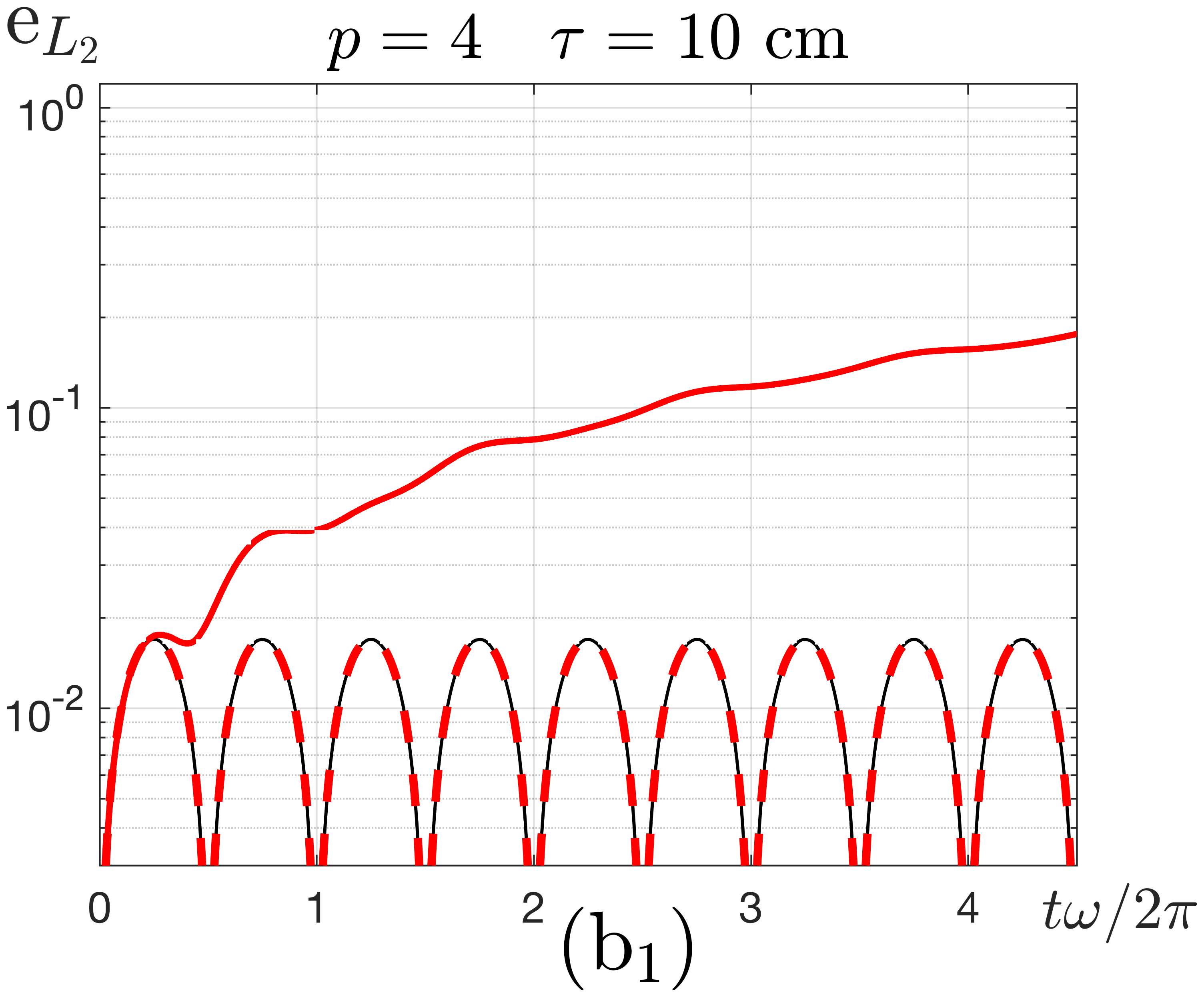}
    \end{subfigure}
    \begin{subfigure}[b]{0.27\textwidth}\includegraphics[height=4.0cm]{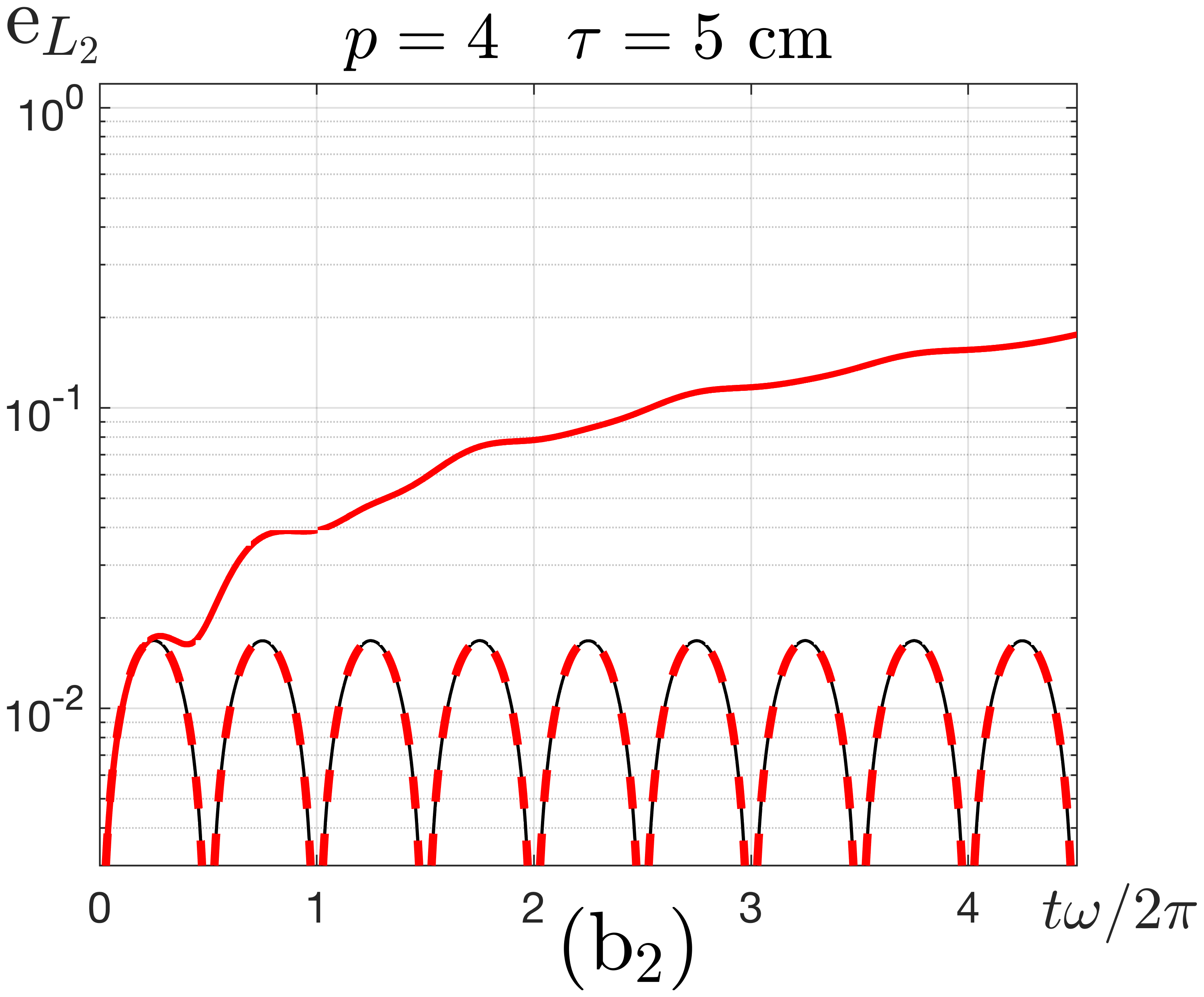}
    \end{subfigure}
    \begin{subfigure}[b]{0.44\textwidth}\includegraphics[height=4.0cm]{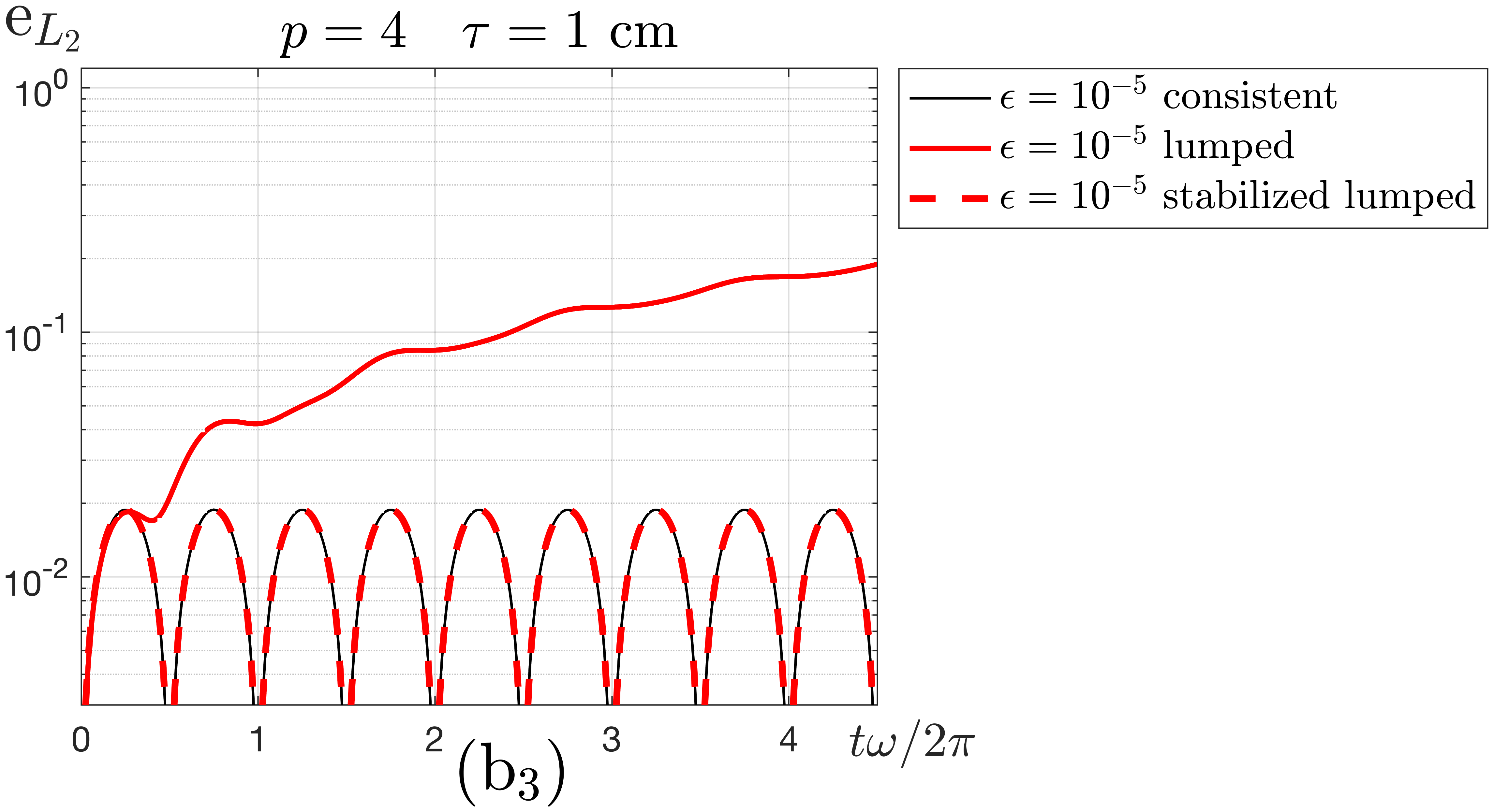}
    \end{subfigure}
    \caption{\Cref{ex: fuselage_window}: $L^2$ error for the consistent and (stabilized) lumped mass solutions for different spline orders, thicknesses and trimming parameters.}
    \label{fig: fuselage_window_err_cons_stab_lumped}
\end{figure}
\end{example}

\begin{example}[B-Pillar]
\label{ex: B-Pillar}
Our final example, inspired from \cite{coradello2020adaptive}, treats the B-pillar of a car. This vertical part, located between the front and rear doors, is an essential component to the vehicle's safety. The irregularly shaped object of length $L=1.4$ m, Young modulus $E=210\cdot10^9$ Pa and Poisson ratio $\nu=0.3$ is clamped on one side in all directions (type 1 boundary conditions) and is subjected to a displacement field $u_3=-0.025$ m on its opposite side. The other components of the displacement field on this same side are left unconstrained by prescribing Neumann boundary conditions instead. We refer to such hybrid cases as type 3 boundary conditions. Finally, to add additional complexity to the model, an L-shaped cut-out is created on the base of the B-pillar and Neumann (or type 2) boundary conditions are prescribed along its inner boundaries. All geometric details and boundary conditions are depicted in \figref{fig: bpillar}{a} and a close-up of the cut-out is shown in \figref{fig: bpillar}{b} together with the underlying mesh. Once again, some of the grid lines are exceedingly close to the inner boundaries, only at a distance $\delta =5\cdot 10^{-8}$. External forces and moments are prescribed on the triangular portion of the domain between the L-shaped cut-out the dashed blue segment $P_B-P_A = (0.09,\;0.09)$ m (see \figref{fig: bpillar}{b}). We denote $\bm{\eta}$ the downward pointing normal to this segment and $\bm{r}$ the usual outward pointing normal to the boundary. The shell's ``external forces'' $\bm{\mathsf{f}} = (\bm{f}, \bm{m})$ are 
\begin{align*}
    \bm{f}(\eta,t) &= -f_0\; e^{-\frac{b_0}{\eta}}\frac{ (b_0^2-2 b_0 \eta-n_0^2\pi^2\eta^4)\sin{(n_0\pi\eta)} + 2 b_0 n_0\pi\eta^2\cos{(n_0\pi\eta)}}{\eta^4}\phi(t) \;\bm{e}_3, \\
    \bm{m}(\eta,t) &=  m_0 \; e^{-\frac{b_0}{\eta}}\frac{b_0\sin{(n_0\pi \eta)}+\eta^2n_0\pi\cos{(n_0\pi\eta)}}{\eta^2} \phi(t) \;\bm{\eta},
\end{align*}
and the ``boundary tractions'' $\bm{\mathsf{h}} = (\bm{h}, \bm{n})$ are
\begin{align*}
    \bm{h}(\eta,t) &= h_0 \; e^{-\frac{b_0}{\eta}}\frac{b_0\sin{(n_0\pi \eta)}+\eta^2n_0\pi\cos{(n_0\pi\eta)}}{\eta^2} \phi(t) \;(\bm{\eta}\cdot \bm{r}) \bm{e}_3, \\
    \bm{n}(\eta,t)&=\bm{0},
\end{align*}
where $\eta$ denotes the coordinate along the normal vector $\bm{\eta}$ and $\phi(t) = \sin(\omega t)$. The parameter values are $f_0 = 1.7\cdot 10^7$ Pa/m$^2$, $m_0 = 1.7\cdot 10^7$ Pa/m, $h_0 = 1.7\cdot 10^7$ Pa/m, $b_0 = 0.1$, $n_0 = 90$ and $\omega = 10^4$ rad/s. The initial velocity conditions in the local triangular portion are
\begin{align*}
    &\dot{\bm{u}}(\eta,0) = v_0 \; e^{-\frac{b_0}{\eta}}\sin{(n_0\pi \eta)}\;\bm{e}_3,\\
    &\dot{\bm{\theta}}(\eta,0) = \bm{0},
\end{align*}
where $v_0 = 5 \cdot 10^2$ m/s. Finally, $\bm{u}(\bm{x},0)$ and $\bm{\theta}(\bm{x},0)$ are given by the solution of the static problem that only accounts for the boundary conditions of type 1 and 3. With such settings, this example aims at simulating a local perturbation to a structural component otherwise in equilibrium. The exact solution to this problem in unknown and we must content ourselves with a reference solution computed over a fine mesh. The numerical solutions for the consistent and (stabilized) lumped mass are computed with the same schemes as before. At a first glance, all solutions reported in \Cref{fig: bpillar_sol} at final time appear satisfactory. The magnitude of the displacement field is maximal at the top of B-pillar subjected to the prescribed displacement and gradually decreases to zero, matching the homogeneous Dirichlet boundary conditions at its base. However, a closer inspection reveals oscillations near one of the corners of the cut-out, which are confirmed by the close-up figures in the bottom row of \Cref{fig: bpillar_sol}. Although the results are not as clear-cut as in the previous examples, the oscillations for the (non-stabilized) lumped mass are somewhat sporadic and unexpected (\figref{fig: bpillar_sol}{l$_2$}). Such patterns do not appear in the solutions for the consistent and stabilized lumped mass (\figref{fig: bpillar_sol}{c$_2$} and \figref{fig: bpillar_sol}{s$_2$}, respectively), suggesting they are a pure artifact of mass lumping and trimming. This finding provides yet another example of the deleterious effects mass lumping may have for trimmed geometries, even in complex industrial-like settings.

\begin{figure}[H]	
    \centering
    \includegraphics[width=0.8\textwidth]{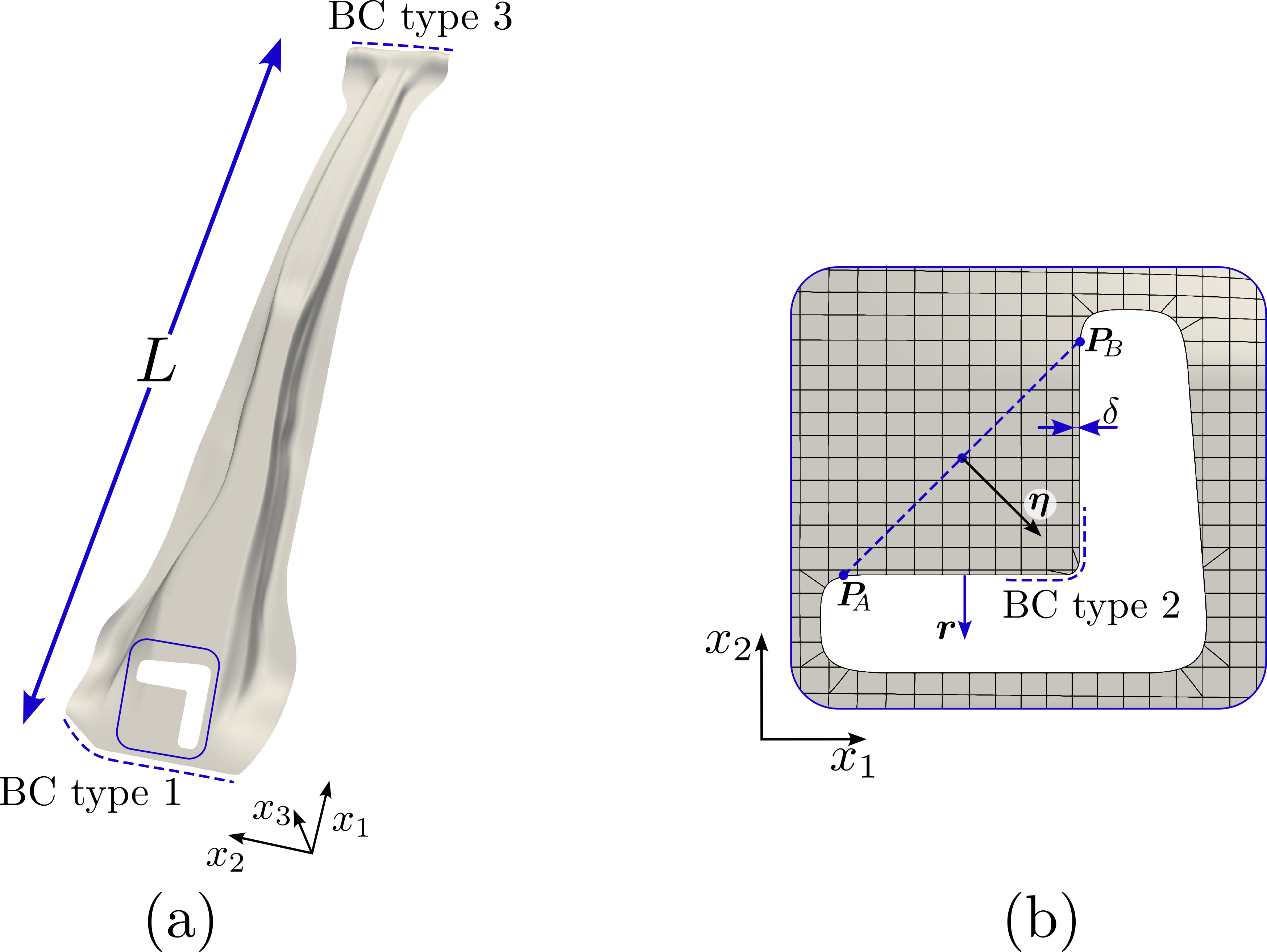}
    \caption{\Cref{ex: B-Pillar}: B-Pillar of a car.}
    \label{fig: bpillar}
\end{figure}




















\begin{figure}[H]	
    \centering
    \includegraphics[width=0.9\textwidth]{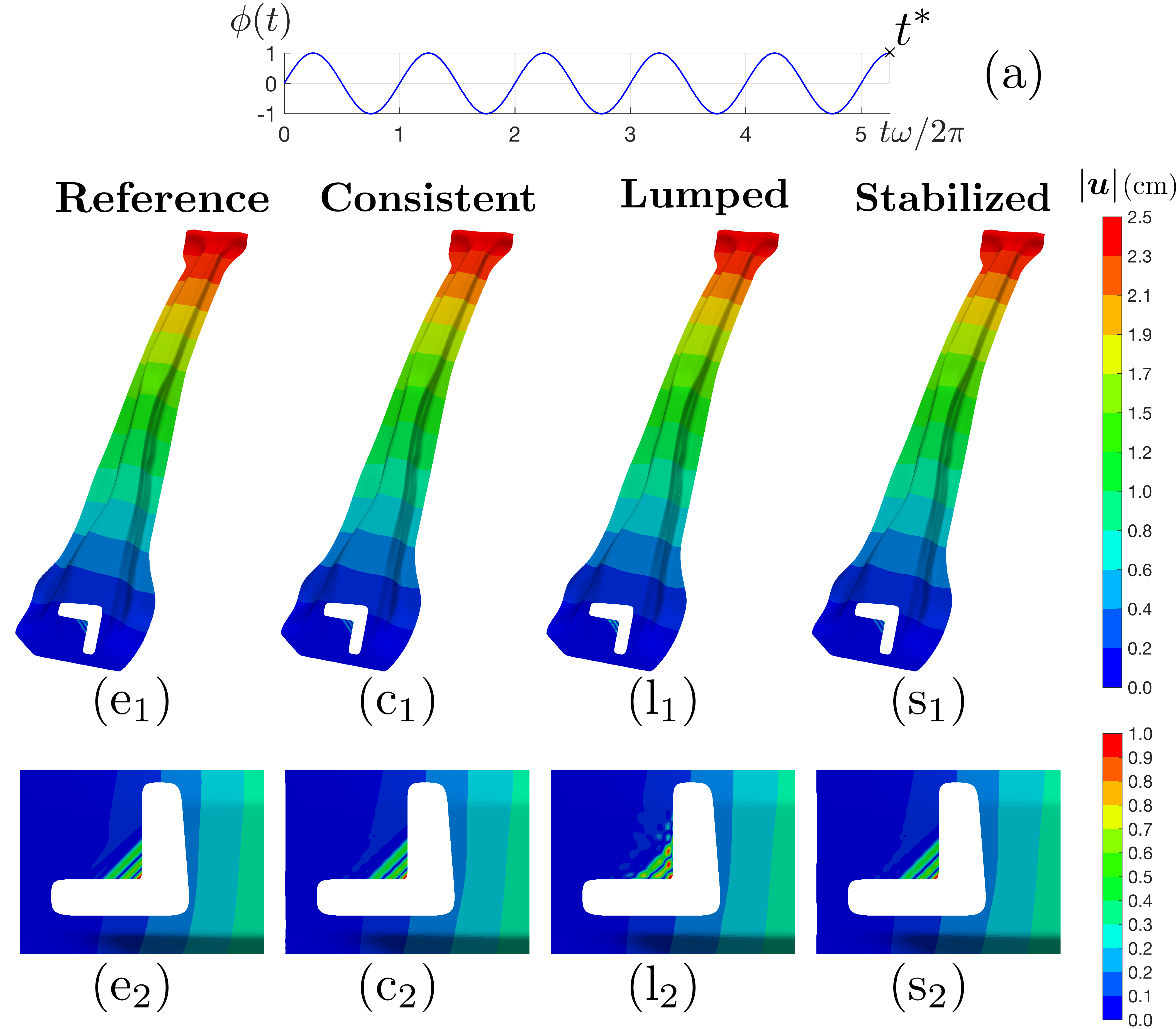}
    \caption{\Cref{ex: B-Pillar}: Snapshots of the reference and numerical solutions.}
    \label{fig: bpillar_sol}
\end{figure}
\end{example}

\section{Conclusion}
\label{se: conclusion}
Finite element shell models are ubiquitous in structural engineering and, as for other structural models, they commonly rely on mass lumping techniques for fast explicit solutions. However, for trimmed isogeometric discretizations, mass lumping may introduce spurious frequencies and modes in the low-frequency spectrum. These modes are not an issue as long as the structural response is orthogonal to them. However, if they are unluckily activated, they may trigger spurious oscillations in the solution thereby compromising simulation results. This fact was recently uncovered for standard wave equations in \cite{voet2025stabilization} and the extensive numerical experiments within this article indicate that the same issue arises for plate and shell models. Since our counter-examples were obtained not only for manufactured solutions but also for prescribed data, we cannot a priori exclude real engineering applications from suffering the same fate. Unfortunately, once lodged in the low-frequency spectrum, filtering out spurious eigenvalues and modes is difficult. Hence, we suggest modifying the discrete formulation prior to lumping the mass matrix. The modification consists in replacing polynomial segments of basis functions on small trimmed elements by extending those from large neighboring elements. This stabilization technique prevents trimming-related spurious modes from surfacing in the low-frequency spectrum when the mass matrix is lumped. As shown in our experiments, the stabilization restores in most cases a level of accuracy comparable to boundary-fitted discretizations. Nevertheless, the stabilization only resolves trimming-related issues and combining it with advanced mass lumping techniques or locking-free strategies might be necessary for further improving the accuracy of the solution.

\section*{Acknowledgments}
The authors kindly acknowledge the support of the Swiss National Science Foundation through the project “PDE tools for analysis-aware geometry processing in simulation science” n. 200021 215099, and through the project “Fast simulation tools for lattice structures” n. 200021 214987.

\appendix
\section{Covariant components of the linearized strain tensor}
\label{app: covariant_comp_strain}
The covariant components $\epsilon_{ij}$ of the strain tensor are its components in the contravariant basis; i.e.
\begin{equation*}
    \bm{\epsilon} = \epsilon_{ij} \bm{g}^i \otimes \bm{g}^j,
\end{equation*}
where $\{\bm{g}^1,\; \bm{g}^2,\; \bm{g}^3\}$ is the contravariant basis pertaining to the volume mapping. Thanks to the Kronecker delta property relating the covariant and contravariant bases, the $(i,j)$th component of the strain tensor is given by
\begin{equation*}
    \epsilon_{ij}= \bm{g}_i \cdot \bm{\epsilon} \bm{g}_j.
\end{equation*}
All we need for computing it is an expression for the strain tensor in curvilinear coordinates, which we compute from its expression in Cartesian coordinates. Indeed, the displacement field as a function of the Cartesian and curvilinear coordinates, denoted $\hat{\mathbf{u}}$ and $\mathbf{u}$, respectively, are related through the volume mapping $\bm{\mathcal{G}}$
\begin{equation*}
    \hat{\mathbf{u}}(\bm{x})=\hat{\mathbf{u}}(\bm{\mathcal{G}}(\bm{\xi}))=\mathbf{u}(\bm{\xi}).
\end{equation*}
Thus, given the familiar expression for the strain tensor in Cartesian coordinates
\begin{equation*}
    \hat{\bm{\epsilon}}(\bm{x}) = \frac{1}{2}\left(\nabla_{\bm{x}} \hat{\mathbf{u}}(\bm{x}) + \nabla_{\bm{x}} \hat{\mathbf{u}}(\bm{x})^T\right)
\end{equation*}
we obtain its expression in curvilinear coordinates as
\begin{equation*}
    \bm{\epsilon}(\bm{\xi}) = \hat{\bm{\epsilon}}(\bm{\mathcal{G}}(\bm{\xi})) = \frac{1}{2}\left(\nabla_{\bm{x}} \hat{\mathbf{u}}(\bm{\mathcal{G}}(\bm{\xi})) + \nabla_{\bm{x}} \hat{\mathbf{u}}(\bm{\mathcal{G}}(\bm{\xi}))^T\right) = \frac{1}{2}\left(\nabla_{\bm{\xi}} \mathbf{u}(\bm{\xi})\nabla_{\bm{\xi}} \bm{\mathcal{G}}(\bm{\xi})^{-1} + \nabla_{\bm{\xi}} \bm{\mathcal{G}}(\bm{\xi})^{-T} \nabla_{\bm{\xi}} \mathbf{u}(\bm{\xi})^T\right)
\end{equation*}
where the subscript for the gradient indicates the differentiation variable. Finally, recalling that the columns of $\nabla_{\bm{\xi}} \bm{\mathcal{G}}=[\bm{g}_1,\; \bm{g}_2,\; \bm{g}_3]$ are the covariant basis vectors, we obtain
\begin{align*}
    \epsilon_{ij}=\bm{g}_i \cdot \bm{\epsilon}(\bm{\xi})\bm{g}_j &= \frac{1}{2}\left(\bm{g}_i \cdot \nabla_{\bm{\xi}} \mathbf{u}(\bm{\xi})\nabla_{\bm{\xi}} \bm{\mathcal{G}}(\bm{\xi})^{-1}\bm{g}_j + \nabla_{\bm{\xi}} \mathbf{u}(\bm{\xi}) \nabla_{\bm{\xi}} \bm{\mathcal{G}}(\bm{\xi})^{-1}\bm{g}_i \cdot \bm{g}_j \right) \\
    &= \frac{1}{2}\left(\bm{g}_i \cdot \nabla_{\bm{\xi}} \mathbf{u}(\bm{\xi}) \bm{e}_j + \nabla_{\bm{\xi}} \mathbf{u}(\bm{\xi}) \bm{e}_i \cdot \bm{g}_j \right) \\
    &=\frac{1}{2}\left(\bm{g}_i \cdot \mathbf{u}_{,j} + \mathbf{u}_{,i} \cdot \bm{g}_j \right),
\end{align*}
which is the expression in \eqref{eq: covariant_comp}.

\section{From volume to surface integration}
\label{app: volume_to_surface}

\begin{figure}[H]	 
    \centering
    \includegraphics[width=0.7\textwidth]{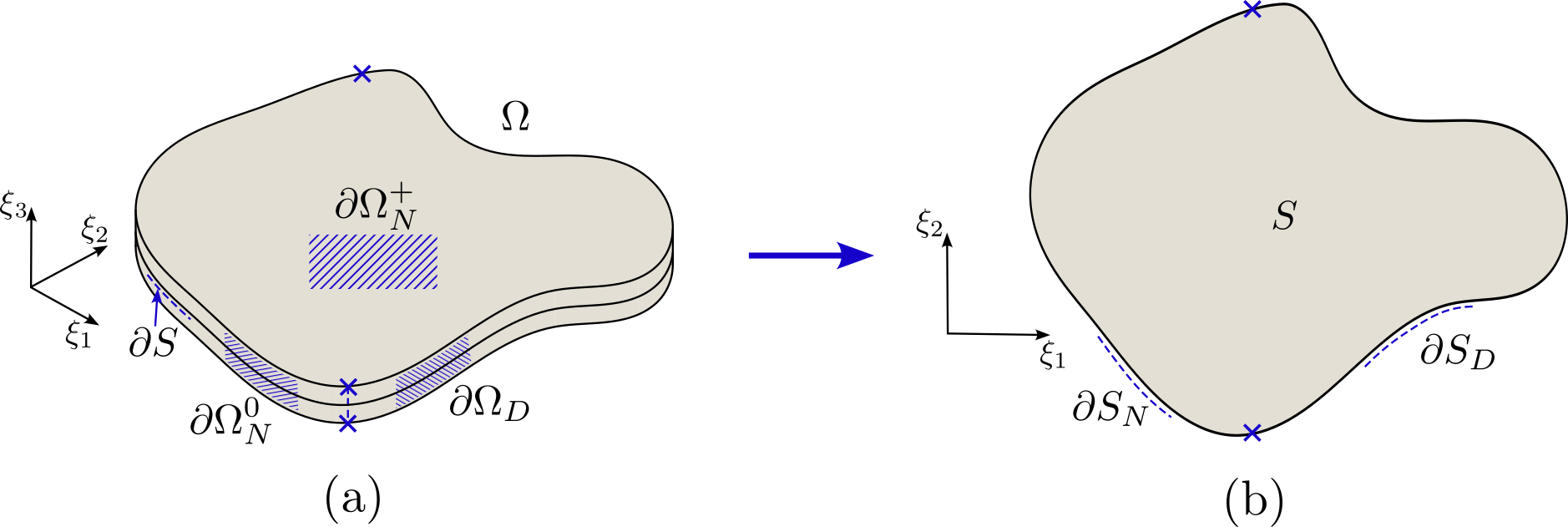}
    \caption{From 3D to 2D.}
    \label{fig: boundaries_def}
\end{figure}
In order to ensure that the article is self-contained, this appendix outlines how the general three-dimensional variational problem integrated over the volume may be reduced to a simpler two-dimensional one integrated over the mid-surface, as is customary for shell formulations \cite{guarino2021equivalent,guarino2023discontinuous}. For the Reissner-Mindlin model, this process is not so obvious and requires several ad hoc assumptions and approximations, which are somewhat scattered in the literature \cite{ciarlet2005introduction,ciarlet2022mathematical,chapelle2010finite,bischoff2004models}. Hence, we summarize the main steps in this appendix.

We recall that the transverse coordinate $\xi_3$ varies in the interval $T = \left(-\frac{\tau}{2}, \ \frac{\tau}{2}\right)$ and note that the Neumann boundary is composed of three parts: $\partial \Omega_N = \partial \Omega_N^- \cup \partial \Omega_N^+ \cup \partial \Omega_N^0$ (see \Cref{fig: boundaries_def}) where

\begin{align*}
    \partial \Omega_N^- = S \times \left\{-\frac{\tau}{2}\right\}, \quad \partial \Omega_N^+ = S \times \left\{\frac{\tau}{2}\right\} \quad \text{and} \quad \partial \Omega_N^0 = \partial S_N \times T.
\end{align*}

The assumption on the displacement field \eqref{eq: disp_field} allows to considerably simplify the general three-dimensional variational problem: find $\mathbf{u}(t) \in V = \{\mathbf{v} \in [H^1]^3 \colon \mathbf{v}|_{\partial \Omega_D} = 0\}$ such that
\begin{equation}
\label{eq: 3D_elasticity_curvilinear_app}
    \int_{\Omega} \rho \ddot{\mathbf{u}} \cdot \mathbf{v} \sqrt{g} \ \dd \Omega + \int_{\Omega} \bm{\epsilon}(\mathbf{u}) \colon \bm{C} \colon \bm{\epsilon}(\mathbf{v}) \sqrt{g} \ \dd \Omega = \int_{\Omega} \mathbf{f} \cdot \mathbf{v} \sqrt{g} \ \dd \Omega + \int_{\partial \Omega_N} \mathbf{h} \cdot \mathbf{v} \sqrt{g} \ \dd \partial \Omega \quad \forall \mathbf{v} \in V.
\end{equation}
The steps for reducing \eqref{eq: 3D_elasticity_curvilinear_app} to an analogous (but approximate) two-dimensional problem are rather nontrivial. The main difficulty lies in isolating the dependency on $\xi_3$ from the other two curvilinear coordinates in order to integrate along the thickness (assumed constant). Our derivation is quite informal and follows an approach similar to \cite[][Chapter 3]{ciarlet2022mathematical} whereby high order terms in $\xi_3$ are dropped out. We will first investigate how $g$ (the determinant of the volume metric tensor) is approximated before analyzing each term of \eqref{eq: 3D_elasticity_curvilinear_app} individually from left to right.

\subsection{Volume metric}
Let us begin with the determinant of the volume metric tensor $g=\det(\nabla \bm{\mathcal{G}}^T \nabla \bm{\mathcal{G}}) = \det(\nabla \bm{\mathcal{G}})^2$. From the multilinearity of the determinant and the expression for the covariant basis vectors \eqref{eq: vol_covariant_basis}, we obtain
\begin{align*}
    \det(\nabla \bm{\mathcal{G}}) &= \det([\bm{a}_1 + \xi_3 \bm{a}_{3,1},\; \bm{a}_2 + \xi_3 \bm{a}_{3,2},\; \bm{a}_3]) \\
    &= \det([\bm{a}_1,\; \bm{a}_2,\; \bm{a}_3]) + \xi_3\left(\det([\bm{a}_{3,1},\; \bm{a}_2,\; \bm{a}_3])+\det([\bm{a}_1,\; \bm{a}_{3,2},\; \bm{a}_3])\right) + \xi_3^2 \det([\bm{a}_{3,1},\; \bm{a}_{3,2},\; \bm{a}_3]).
\end{align*}
Moreover, since $\bm{a}_i \cdot \bm{a}_3 = \delta_{i3}$,
\begin{equation}
\label{eq: det}
    \det([\bm{a}_1,\; \bm{a}_2,\; \bm{a}_3])^2 = 
    \begin{vmatrix}
        \bm{a}_1 \cdot \bm{a}_1 & \bm{a}_1 \cdot \bm{a}_2 & 0 \\
        \bm{a}_2 \cdot \bm{a}_1 & \bm{a}_2 \cdot \bm{a}_2 & 0 \\
        0 & 0 & 1
    \end{vmatrix}
    = \det(\nabla \bm{\mathcal{F}}^T \nabla \bm{\mathcal{F}}) = a.
\end{equation}
Hence, a first order approximation is given by
\begin{equation*}
    g=\det(\nabla \bm{\mathcal{G}}^T \nabla \bm{\mathcal{G}}) = \det(\nabla \bm{\mathcal{G}})^2 \approx \det([\bm{a}_1,\; \bm{a}_2,\; \bm{a}_3])^2 = \det(\nabla \bm{\mathcal{F}}^T \nabla \bm{\mathcal{F}}) = a.
\end{equation*}
This approximation justifies substituting $\sqrt{g}$ with $\sqrt{a}$ in \eqref{eq: 3D_elasticity_curvilinear_app} for thin-walled structures.

\subsection{Inertia forces}
Now we simplify each term from left to right in \eqref{eq: 3D_elasticity_curvilinear_app} after replacing the trial and test functions by their expression according to the Reissner-Mindlin assumption
\begin{align*}
    \mathbf{u}(\xi_1,\xi_2,\xi_3,t) &= \bm{u}(\xi_1,\xi_2,t)+\xi_3 \bm{\theta}(\xi_1,\xi_2,t), \\
    \mathbf{v}(\xi_1,\xi_2,\xi_3) &= \bm{v}(\xi_1,\xi_2)+\xi_3 \bm{\phi}(\xi_1,\xi_2).
\end{align*}
The first term then becomes
\begin{align*}
    \int_{\Omega} \rho \ddot{\mathbf{u}} \cdot \mathbf{v} \sqrt{g} \ \dd \Omega &\approx
    \int_{S} \int_T \rho (\ddot{\bm{u}} + \xi_3 \ddot{\bm{\theta}}) \cdot (\bm{v}+\xi_3 \bm{\phi}) \sqrt{a} \ \dd \xi_3 \dd S \\
    &= \int_{S} \left(\ddot{\bm{u}} \cdot \bm{v} \int_T \rho \dd \xi_3  + \left(\ddot{\bm{u}} \cdot \bm{\phi} + \bm{v} \cdot \ddot{\bm{\theta}} \right) \int_T \rho \xi_3 \dd \xi_3 + \ddot{\bm{\theta}} \cdot \bm{\phi} \int_T \rho \xi_3^2 \dd \xi_3 \right) \sqrt{a} \dd S.
\end{align*}
If $\rho$ is assumed independent of $\xi_3$, the above expression reduces to
\begin{equation}
\label{eq: inertia_forces}
    \int_{S} \rho_u \ddot{\bm{u}} \cdot \bm{v} + \rho_\theta \ddot{\bm{\theta}} \cdot \bm{\phi} \sqrt{a} \dd S = \int_{S}(\ddot{\bm{\mathsf{u}}} \cdot W \bm{\mathsf{v}}) \sqrt{a} \dd S
\end{equation}
with $\bm{\mathsf{u}}=(\bm{u},\bm{\theta})$, $\bm{\mathsf{v}}=(\bm{v},\bm{\phi})$ and $W=\diag(\rho_u I_3,\; \rho_\theta I_3)$ where
\begin{equation*}
    \rho_u = \int_T \rho \dd \xi_3 = \tau \rho \quad \text{and} \quad \rho_{\theta} = \int_T \rho \xi_3^2 \dd \xi_3 = \frac{\tau^3}{12}\rho.
\end{equation*}

\begin{remark}
If $\rho$ depends on $\xi_3$, $W$ is non-diagonal (but symmetric).
\end{remark}

\subsection{Internal forces}
The derivations for the second term (related to the internal forces) is quite lengthy. We first express it componentwise as
\begin{equation*}
    \int_{\Omega} \bm{\epsilon}(\mathbf{u}) \colon \bm{C} \colon \bm{\epsilon}(\mathbf{v}) \sqrt{g} \ \dd \Omega = \int_{\Omega} C^{ijkl} \epsilon_{kl}(\mathbf{u})\epsilon_{ij}(\mathbf{v}) \sqrt{g} \ \dd \Omega
\end{equation*}
where $C^{ijkl}$ are the contravariant components of $\bm{C}$, which, for a linear elastic homogeneous and isotropic material, are given by
\begin{equation*}
    C^{ijkl} = \lambda g^{ij}g^{kl} + \mu(g^{ik}g^{jl} + g^{il}g^{jk})
\end{equation*}
and $\epsilon_{ij}$ are the covariant components of the strain tensor, listed in \eqref{eq: linearized_strains} and recalled below:
\begin{subequations}
\label{eq: linearized_strains_app}
\begin{align}
    \epsilon_{\alpha\beta} &= \varepsilon_{\alpha \beta} + \xi_3 \kappa_{\alpha \beta} + \xi_3^2 \chi_{\alpha\beta},  \label{eq: eps_alpha_beta} \\
    \epsilon_{3\alpha} &= \frac{\gamma_{\alpha}}{2},\\
    \epsilon_{33} &= 0. 
\end{align}
\end{subequations}
The expressions for the membrane, bending and shear strains ($\bm{\varepsilon}$, $\bm{\kappa}$ and $\bm{\chi}$, respectively) are given in \eqref{eq: strains_def} but are not needed here; one must only bear in mind that they do not depend on $\xi_3$. Since the components of the elasticity tensor $\bm{C}$ do depend on $\xi_3$, we will first substitute them with a first order approximation. For this purpose, recall that the contravariant components of the volume metric tensor $g^{ij}$ are the components of $(\nabla \bm{\mathcal{G}}^T \nabla \bm{\mathcal{G}})^{-1}$. Let us decompose $\nabla \bm{\mathcal{G}}$ as
\begin{equation*}
    \nabla \bm{\mathcal{G}} = \bm{A} + \xi_3 \bm{X}
\end{equation*}
where
\begin{equation*}
    \bm{A} = [\bm{a}_1,\; \bm{a}_2,\; \bm{a}_3], \quad \text{and} \quad \bm{X} = [\bm{a}_{3,1},\; \bm{a}_{3,2},\; \bm{0}].
\end{equation*}
Since $\bm{A}$ is invertible, $\nabla \bm{\mathcal{G}}^{-1} = \bm{A}^{-1}(\bm{I} + \xi_3 \bm{X}\bm{A}^{-1})^{-1}$ and if $\xi_3$ is small enough (such that $\|\xi_3 \bm{X}\bm{A}^{-1}\| < 1$ in any suitable matrix norm), a Neumann series expansion yields (see \cite[][Corollary 5.6.16]{horn2012matrix} and the subsequent exercise therein)
\begin{equation*}
    \nabla \bm{\mathcal{G}}^{-1} = \bm{A}^{-1} \sum_{k=0}^{\infty} (-1)^k (\xi_3 \bm{X}\bm{A}^{-1})^k = \bm{A}^{-1} - \xi_3 \bm{A}^{-1} \bm{X} \bm{A}^{-1} + \mathcal{O}(\xi_3^2).
\end{equation*}
Hence, $\bm{A}^{-1}$ is a first order approximation of $\nabla \bm{\mathcal{G}}^{-1}$ and therefore
\begin{equation*}
    (\nabla \bm{\mathcal{G}}^T \nabla \bm{\mathcal{G}})^{-1} = (\bm{A}^T \bm{A})^{-1} + \mathcal{O}(\xi_3).
\end{equation*}
The foregoing expression justifies approximating the contravariant components of the volume metric tensor as $g^{ij} \approx a^{ij}$. Hence,
\begin{equation*}
    C^{ijkl} = \lambda g^{ij}g^{kl} + \mu(g^{ik}g^{jl} + g^{il}g^{jk}) \approx A^{ijkl} := \lambda a^{ij}a^{kl} + \mu(a^{ik}a^{jl} + a^{il}a^{jk}).
\end{equation*}
This approximation allows reducing the three-dimensional constitutive model to a two-dimensional one. For future reference, we note that $A$ inherits the major and minor symmetries of $C$ (i.e. $A^{ijkl}=A^{klij}$, $A^{ijkl}=A^{jikl}$ and $A^{ijkl}=A^{ijlk}$) and yields several useful identities:
\begin{subequations}
\label{eq: comp_A}
\begin{align}
    A^{33\alpha\beta} &= \lambda a^{\alpha\beta}, & A^{3\alpha\beta3} &= \mu a^{\alpha\beta},  & A^{3333} = \lambda + 2\mu, \\
    A^{\alpha\beta\gamma3} &= 0, & A^{\alpha333} &=0. & & 
\end{align}
\end{subequations}
Imposing the plane stress assumption ($\sigma_{33}=0$) allows to eliminate $\epsilon_{33}$ from the equations and the Lamé constants in this context are given by
\begin{equation*}
    \lambda=\frac{E\nu}{1-\nu^2} \quad \text{and} \quad \mu=\frac{E}{2(1+\nu)}.
\end{equation*}
Note that, within the Reissner-Mindlin theory, the plane stress assumption is incompatible with the condition 
$\epsilon_{33}=0$, which is a direct consequence of the kinematic hypothesis on the displacement field. However, the accuracy of the theory is not affected by this inconsistency. From the three-dimensional constitutive relation (with $C$ substituted for $A$) and the identities in \eqref{eq: comp_A},
\begin{equation}
\label{eq: eps_33}
    \sigma_{33} = A^{33kl} \epsilon_{kl} = A^{33\alpha\beta} \epsilon_{\alpha\beta} + 2A^{33\alpha3} \epsilon_{\alpha3} + A^{3333} \epsilon_{33} = \lambda a^{\alpha\beta}\epsilon_{\alpha\beta} + (\lambda+2\mu)\epsilon_{33} = 0 \implies \epsilon_{33} = -\frac{\lambda a^{\gamma \delta} \epsilon_{\gamma\delta}}{\lambda+2\mu}.
\end{equation}
Now we simplify the constitutive relation by exploiting \eqref{eq: comp_A} and substituting $\epsilon_{33}$ by its expression in \eqref{eq: eps_33}
\begin{align*}
    A^{ijkl}\epsilon_{ij}\epsilon_{kl} &= A^{\alpha\beta kl} \epsilon_{\alpha\beta}\epsilon_{kl} + 2A^{3\beta kl} \epsilon_{3\beta}\epsilon_{kl} + A^{33 kl} \epsilon_{33}\epsilon_{kl} \\
    &= \epsilon_{\alpha\beta}\left(A^{\alpha\beta\gamma\delta}\epsilon_{\gamma\delta} + 2A^{\alpha\beta3\delta}\epsilon_{3\delta} + A^{\alpha\beta33}\epsilon_{33}\right) + 2\epsilon_{3\beta}\left(A^{3\beta\gamma\delta}\epsilon_{\gamma\delta}+2A^{3\beta3\delta}\epsilon_{3\delta}+A^{3\beta33}\epsilon_{33}\right) \\
    &= \epsilon_{\alpha\beta}\left(A^{\alpha\beta\gamma\delta} -\frac{\lambda^2 a^{\alpha\beta} a^{\gamma\delta}}{\lambda+2\mu} \right)\epsilon_{\gamma\delta} + 4\epsilon_{3\beta}A^{3\beta3\delta}\epsilon_{3\delta} \\
    &= \epsilon_{\alpha\beta}\left(\frac{2 \lambda \mu}{\lambda+2\mu}a^{\alpha\beta} a^{\gamma\delta} + \mu(a^{\alpha\gamma} a^{\beta\delta}+a^{\alpha\delta} a^{\beta\gamma})\right)\epsilon_{\gamma\delta} + \gamma_\alpha (\mu a^{\alpha\beta}) \gamma_\beta.
\end{align*}
We denote
\begin{equation*}
    E^{\alpha\beta\gamma\delta}=\frac{2 \lambda \mu}{\lambda+2\mu}a^{\alpha\beta} a^{\gamma\delta} + \mu(a^{\alpha\gamma} a^{\beta\delta}+a^{\alpha\delta} a^{\beta\gamma})
\end{equation*}
the shell elasticity tensor. Substituting $\epsilon_{\alpha\beta}$ with its expression in \eqref{eq: eps_alpha_beta}, neglecting high order terms and integrating over the thickness yields
\begin{align*}
   \int_T A^{ijkl}\epsilon_{ij}\epsilon_{kl} \dd \xi_3 &\approx \int_T \left(\varepsilon_{\alpha\beta} E^{\alpha\beta\gamma\delta} \varepsilon_{\gamma\delta} + 2\xi_3 \varepsilon_{\alpha\beta} E^{\alpha\beta\gamma\delta} \kappa_{\gamma\delta} + \xi_3^2 \kappa_{\alpha\beta} E^{\alpha\beta\gamma\delta} \kappa_{\gamma\delta} \right) \dd \xi_3 + \int_T \gamma_\alpha (\mu a^{\alpha\beta}) \gamma_\beta \dd \xi_3 \\
   &= \bm{N} \colon \bm{\varepsilon} +  \bm{M} \colon \bm{\kappa}+ \bm{Q} \cdot \bm{\gamma}
\end{align*}
where 
\begin{align}
    N^{\alpha\beta} = E_u^{\alpha\beta\gamma\delta}\varepsilon_{\gamma\delta}, \quad M^{\alpha\beta} = E_\theta^{\alpha\beta\gamma\delta}\kappa_{\gamma\delta}, \quad \text{and} \quad Q^{\alpha} = \mu \tau a^{\alpha\beta }\gamma_\beta
\end{align}
are called the generalized membrane, bending and shear stresses, respectively, and $E_u^{\alpha\beta\gamma\delta} = \tau E^{\alpha\beta\gamma\delta}$ and $E_\theta^{\alpha\beta\gamma\delta}=\frac{\tau^3}{12}E^{\alpha\beta\gamma\delta}$ are simply rescaled versions of the shell elasticity tensor. In practice, the contribution from shear stresses is scaled with an ad hoc shear correction factor $\alpha_s$ such that $Q^{\alpha} = \mu\alpha_s\tau a^{\alpha\beta }\gamma_\beta$. This factor ensures consistency with classical bending theory \cite[][Chapter 6]{hughes2012finite}. Finally, putting all the pieces back together, we obtain
\begin{equation}
\label{eq: internal_forces}
    \int_{\Omega} \bm{\epsilon}(\mathbf{u}) \colon \bm{C} \colon \bm{\epsilon}(\mathbf{v}) \sqrt{g} \ \dd \Omega \approx \int_{S} (\bm{N}(\bm{\mathsf{u}}) \colon \bm{\varepsilon}(\bm{\mathsf{v}}) +  \bm{M}(\bm{\mathsf{u}}) \colon \bm{\kappa}(\bm{\mathsf{v}}) + \bm{Q}(\bm{\mathsf{u}}) \cdot \bm{\gamma}(\bm{\mathsf{v}})) \sqrt{a} \dd S.
\end{equation}

\subsection{Volume forces}
For the volume forces (i.e., the third term), we obtain
\begin{align}
    \int_{\Omega} \mathbf{f} \cdot \mathbf{v} \sqrt{g} \ \dd \Omega &\approx \int_{\Omega} \mathbf{f} \cdot (\bm{v} + \xi_3 \bm{\phi}) \sqrt{a} \ \dd \Omega \nonumber \\
    &= \int_S \left( \bm{v} \cdot \int_T \mathbf{f} \dd \xi_3 + \bm{\phi} \cdot \int_T \xi_3 \mathbf{f} \dd \xi_3 \right) \sqrt{a} \dd S \nonumber \\
    &= \int_S \left(\bm{v} \cdot \bm{f}_1 + \bm{\phi} \cdot \bm{m}_1 \right) \sqrt{a} \dd S \nonumber \\
    &= \int_{S} \bm{\mathsf{f}}_1 \cdot \bm{\mathsf{v}} \sqrt{a} \dd S \label{eq: body_forces}
\end{align}
where $\bm{\mathsf{f}}_1 = (\bm{f}_1, \bm{m}_1)$ with
\begin{equation*}
    \bm{f}_1=\int_T \mathbf{f} \dd \xi_3 \quad \text{and} \quad \bm{m}_1=\int_T \xi_3 \mathbf{f} \dd \xi_3
\end{equation*}
representing distributed forces and moments applied to the mid-surface $S$, respectively.

\subsection{Surface tractions}
Since $\partial \Omega_N = \partial \Omega_N^- \cup \partial \Omega_N^+ \cup \partial \Omega_N^0$, the surface tractions give rise to three distinct contributions: those from the ``bottom'' $\partial \Omega_N^- = S \times \left\{-\frac{\tau}{2}\right\}$ and ``top'' $\partial \Omega_N^+ = S \times \left\{\frac{\tau}{2}\right\}$ surfaces of the shell as well as from its ``sides'' $\partial \Omega_N^0 = \partial S_N \times T$. Denoting $\mathbf{h}^-$, $\mathbf{h}^+$ and $\mathbf{h}^0$ these respective contributions, the fourth and last term in \eqref{eq: 3D_elasticity_curvilinear_app} becomes
\begin{align}
    \int_{\partial \Omega_N} \mathbf{h} \cdot \mathbf{v} \sqrt{g} \ \dd \partial \Omega &\approx
    \int_S \left(\mathbf{h}^- \cdot (\bm{v} - \frac{\tau}{2}\bm{\phi}) + \mathbf{h}^+ \cdot (\bm{v} + \frac{\tau}{2}\bm{\phi})\right)  \sqrt{a} \ \dd S + \int_{\partial S_N} \left(\bm{v} \cdot \int_T \mathbf{h}^0 \dd \xi_3 + \bm{\phi} \cdot \int_T \xi_3 \mathbf{h}^0 \dd \xi_3\right) \sqrt{a} \dd \partial S \nonumber \\
    &= \int_S \left((\mathbf{h}^++\mathbf{h}^-) \cdot \bm{v} + \frac{\tau}{2}(\mathbf{h}^+-\mathbf{h}^-) \cdot \bm{\phi}\right) \sqrt{a} \ \dd S + \int_{\partial S_N} \left(\bm{h} \cdot \bm{v} + \bm{n} \cdot \bm{\phi}\right)\sqrt{a} \dd \partial S \nonumber \\
    &= \int_S \bm{\mathsf{f}}_2 \cdot \bm{\mathsf{v}} \sqrt{a} \dd S + \int_{\partial S_N} \bm{\mathsf{h}} \cdot \bm{\mathsf{v}} \sqrt{a} \dd \partial S \label{eq: surface_tractions}
\end{align}
where $\bm{\mathsf{f}}_2 = (\bm{f}_2, \bm{m}_2)$ and $\bm{\mathsf{h}}=(\bm{h}, \bm{n})$ whose components are
\begin{align*}
    \bm{f}_2 &= \mathbf{h}^++\mathbf{h}^- \quad \text{and} \quad \bm{m}_2 = \frac{\tau}{2}(\mathbf{h}^+-\mathbf{h}^-), \\
    \bm{h} &= \int_T \mathbf{h}^0 \dd \xi_3 \quad \text{and} \quad \bm{n}=\int_T \xi_3 \mathbf{h}^0 \dd \xi_3.
\end{align*}

Finally, combining \eqref{eq: inertia_forces}, \eqref{eq: internal_forces}, \eqref{eq: body_forces} and \eqref{eq: surface_tractions}, we obtain the two-dimensional shell variational formulation
\begin{equation*}
    \int_{S}(\ddot{\bm{\mathsf{u}}} \cdot W \bm{\mathsf{v}}) \sqrt{a} \dd S + \int_{S} (\bm{N}(\bm{\mathsf{u}}) \colon \bm{\varepsilon}(\bm{\mathsf{v}}) +  \bm{M}(\bm{\mathsf{u}}) \colon \bm{\kappa}(\bm{\mathsf{v}}) + \bm{Q}(\bm{\mathsf{u}}) \cdot \bm{\gamma}(\bm{\mathsf{v}})) \sqrt{a} \dd S = \int_{S} \bm{\mathsf{f}} \cdot \bm{\mathsf{v}} \sqrt{a} \dd S + \int_{\partial S_N} \bm{\mathsf{h}} \cdot \bm{\mathsf{v}} \sqrt{a} \dd \partial S
\end{equation*}
where $\bm{\mathsf{f}} = \bm{\mathsf{f}}_1 + \bm{\mathsf{f}}_2 = (\bm{f}, \bm{m})$.


\end{document}